\setlist[itemize]{noitemsep, topsep=1pt, leftmargin=20pt}
\newcommand\bcdot{\ensuremath{
  \mathchoice
   {\mskip\thinmuskip\lower0.2ex\hbox{\scalebox{1.6}{$\cdot$}}\mskip\thinmuskip}}
   {\mskip\thinmuskip\lower0.2ex\hbox{\scalebox{1.6}{$\cdot$}}\mskip\thinmuskip}
   {\lower0.3ex\hbox{\scalebox{1.2}{$\cdot$}}}
   {\lower0.3ex\hbox{\scalebox{1.2}{$\cdot$}}}
}
\theoremstyle{plain}
\newtheorem{theo}{Theorem}[section]
\newtheorem{prop}[theo]{Proposition}
\theoremstyle{definition}
\newtheorem{rem}[theo]{Remark}
\newtheorem{example}[theo]{Example}
\newtheorem{definition}[theo]{Definition}
\theoremstyle{plain}
\newtheorem{lemma}[theo]{Lemma}
\newtheorem{theorem}[theo]{Theorem}
\newtheorem{corollary}[theo]{Corollary}
\theoremstyle{definition}
\newtheorem{remark}[theo]{Remark}
\theoremstyle{plain}
\newtheorem{thmint}{Theorem}
\newtheorem{corint}[thmint]{Corollary}
\renewcommand{\=}{:=}
\renewcommand{\a}{\alpha}
\renewcommand{\b}{\beta}
\renewcommand{\c}{\chi}
\renewcommand{\d}{\delta}
\newcommand{\e}{\varepsilon}
\newcommand{\f}{\varphi}
\newcommand{\g}{\gamma}
\renewcommand{\l}{\lambda}
\renewcommand{\L}{\Lambda}
\newcommand{\W}{\Omega}
\newcommand{\bC}{\mathbb{C}}
\newcommand{\bR}{\mathbb{R}}
\newcommand{\bZ}{\mathbb{Z}}
\newcommand{\bN}{\mathbb{N}}
\newcommand{\bQ}{\mathbb{Q}}
\newcommand{\fG}{\mathsf{G}}
\newcommand{\fH}{\mathsf{H}}
\newcommand{\fL}{\mathsf{L}}
\newcommand{\fT}{\mathsf{T}}
\newcommand{\fGL}{\mathsf{GL}}
\newcommand{\fSL}{\mathsf{SL}}
\newcommand{\fO}{\mathsf{O}}
\newcommand{\fSO}{\mathsf{SO}}
\newcommand{\fSE}{\mathsf{SE}}
\newcommand{\fSU}{\mathsf{SU}}
\newcommand{\ga}{\mathfrak{a}}
\renewcommand{\gg}{\mathfrak{g}}
\newcommand{\gh}{\mathfrak{h}}
\newcommand{\gi}{\mathfrak{i}}
\newcommand{\gm}{\mathfrak{m}}
\newcommand{\go}{\mathfrak{o}}
\newcommand{\gs}{\mathfrak{s}}
\newcommand{\gt}{\mathfrak{t}}
\newcommand{\gz}{\mathfrak{z}}
\newcommand{\gS}{\mathfrak{S}}
\newcommand{\so}{\mathfrak{so}}
\newcommand{\su}{\mathfrak{su}}
\newcommand{\cC}{\mathcal{C}}
\newcommand{\cR}{\mathcal{R}}
\newcommand{\cV}{\mathcal{V}}
\newcommand{\eB}{\EuScript{B}}
\newcommand{\eH}{\EuScript{H}}
\newcommand{\st}{{\operatorname{st}}}
\newcommand{\p}{\partial}
\newcommand{\rar}{\rightarrow}
\newcommand{\la}{\langle}
\newcommand{\ra}{\rangle}
\renewcommand{\square}{\kern1pt\vbox
{\hrule height 0.6pt\hbox{\vrule width 0.6pt\hskip 3pt \vbox{\vskip
6pt}\hskip 3pt\vrule width 0.6pt}\hrule height0.6pt}\kern1pt}
\renewcommand{\=}{\  \raisebox{0.15mm}{:} {=} \ }
\newcommand{\rank}{\operatorname{rank}}
\DeclareMathOperator\Tr{Tr}
\DeclareMathOperator\Lie{Lie}
\DeclareMathOperator\ad{ad}
\DeclareMathOperator{\vspan}{span}
\newcommand\Rm{\operatorname{Rm}}
\newcommand{\inj}{\operatorname{inj}}
\newcommand{\wt}{\widetilde}
\newcommand{\ol}{\overline}
\newcommand{\zero}{\operatorname{o}}
\def\<#1,#2>{\langle\,#1,\,#2\,\rangle}
\newcommand{\Aac}{\`A}
\newcommand{\Math}{{\it Mathematica\raise5 pt\hbox{$\scriptscriptstyle \circledR$}7}}
\newcommand{\n}{\nabla}
\newcommand{\beq}{\begin{equation}}
\newcommand{\eeq}{\end{equation}}
\def\<#1,#2>{\langle\,#1,\,#2\,\rangle}
\newcommand{\arr}{\begin{array}{rlll}}
\newcommand{\ea}{\end{array}}
\newcommand{\bea}{\begin{eqnarray}}
\newcommand{\eea}{\end{eqnarray}}
\newcommand{\bean}{\begin{eqnarray*}}
\newcommand{\eean}{\end{eqnarray*}}
\def\sideremark#1{\ifvmode\leavevmode\fi\vadjust{
\vbox to0pt{\hbox to 0pt{\hskip\hsize\hskip1em
\vbox{\hsize3cm\tiny\raggedright\pretolerance10000
\noindent #1\hfill}\hss}\vbox to8pt{\vfil}\vss}}}
\newcounter{ssig}
\newcounter{ttig}
\title[Convergence of locally homogeneous spaces]{Convergence of locally homogeneous spaces}
\author{Francesco Pediconi}
\subjclass[2010]{53C30, 53C21}
\keywords{Locally homogenous Riemannian spaces, convergence of Riemannian manifolds}
\thanks{This work was supported by project PRIN 2017 ``Real and Complex Manifolds: Topology, Geometry and holomorphic dynamics'' (code 2017JZ2SW5) and by GNSAGA of INdAM} 
\begin{document}
\begin{abstract} We study three different topologies on the moduli space $\eH^{\rm loc}_m$ of equivariant local isometry classes of $m$-dimensional locally homogeneous Riemannian spaces. As an application, we provide the first examples of locally homogeneous spaces converging to a limit space in the pointed $\cC^{k,\a}$-topology, for some $k>1$, which do not admit any convergent subsequence in the pointed $\cC^{k+1}$-topology.
\end{abstract}

\maketitle


\section{Introduction} \setcounter{equation} 0

In this paper we consider $m$-dimensional locally homogeneous Riemannian spaces and the moduli space of their equivalence classes up to equivariant local isometries, which we denote by $\eH^{\rm loc}_m$. We also denote by $\eH_m \subset \eH^{\rm loc}_m$ the moduli subspace of locally homogeneous spaces which are equivariantly locally isometric to a globally homogeneous space. It is known that $\eH_m = \eH^{\rm loc}_m$ if $1 \leq m \leq 4$ (see \cite[Sec 7]{Mos}), while $\eH_m \subsetneq \eH^{\rm loc}_m$ for any $m \geq 5$ (see \cite{Mos,Kow}). Moreover, $\eH^{\rm loc}_m$ is a natural completion of $\eH_m$ in case $m \geq 5$ (see e.g. \cite{Ped3}). \smallskip

In \cite{Lau1} Lauret developed a framework which allows one to parameterize the moduli space $\eH_m$ by a distinguished set of Lie algebras with an additional structure. This parametrization associates to any element $\mu \in \eH_m$ a pair $(\fG_{\mu}/\fH_{\mu},g_{\mu})$ given by the quotient of a simply connected Lie group $\fG_{\mu}$ by a closed connected subgroup $\fH_{\mu} \subset \fG_{\mu}$ and a $\fG_{\mu}$-invariant metric $g_{\mu}$. Furthermore, he endowed $\eH_m$ with three different topologies and he discussed the relations among them. These are: \begin{itemize}[leftmargin=20pt]
\item[$\bcdot$] the {\it pointed convergence topology}, that is the usual convergence in pointed Cheeger-Gromov topology of pointed Riemannian manifolds;
\item[$\bcdot$] the {\it infinitesimal convergence topology}, that is a weaker notion that involves only the germs of the metrics at a point;
\item[$\bcdot$] the {\it algebraic convergence topology}, which only takes into account the underlying algebraic structure.
\end{itemize} Since this approach turned out to be particularly well suited for studying curvature variational problems and geometric flows in the globally homogeneous setting (see e.g. \cite{Lau1,Lau2,Lau3,Lau4} and references therein), we carry out in this paper a similar analysis for $\eH^{\rm loc}_m$. \smallskip

Some classical results in \cite{Sp1,Sp2} allows us to reformulate Lauret's construction for $\eH^{\rm loc}_m$ (see also \cite[Sec 5]{B\"o4}). More precisely, one can associate to any $\mu \in \eH^{\rm loc}_m$ a pair $(\fG_{\mu}/\fH_{\mu},g_{\mu})$ as above, with the only difference that, if $\mu \in \eH^{\rm loc}_m \setminus \eH_m$, then the subgroup $\fH_{\mu} \subset \fG_{\mu}$ is not closed. Notice that in this case the quotient space $\fG_{\mu}/\fH_{\mu}$ is not even Hausdorff, and hence it is not a smooth manifold. This issue is overcome by considering {\it local factor spaces}, which generalize the usual quotients of Lie groups (see \cite{Mos,Sp2}).

Both the infinitesimal convergence and the algebraic convergence perfectly extend to the space $\eH^{\rm loc}_m$ (see Definition \ref{algconv} and Definition \ref{infconv}). Actually, for what concerns the former, we introduce a weaker version of that, which we call {\it $s$-infinitesimal convergence}. Here, in the following definition, we denote by $\imath(m)$ the maximum of the Singer invariants of $m$-dimensional locally homogeneous spaces (see Formula \eqref{i(m)}).

\vskip 5pt

\noindent \textbf{Definition (see Definition \ref{infconv}). } For any integer $s \geq \imath(m)+2$, a sequence $(\mu^{(n)}) \subset \eH^{\rm loc}_m$ {\it converges $s$-infinitesimally} to $\mu^{(\infty)} \in \eH^{\rm loc}_m$ if the Riemannian curvature tensors and their first $s$ covariant derivatives at the origin $e_{\mu^{(n)}}\fH_{\mu^{(n)}}$ of $(\fG_{\mu^{(n)}}/\fH_{\mu^{(n)}},g_{\mu^{(n)}})$ converge to those of $(\fG_{\mu^{(\infty)}}/\fH_{\mu^{(\infty)}},g_{\mu^{(\infty)}})$ at the origin $e_{\mu^{(\infty)}}\fH_{\mu^{(\infty)}}$.

\vskip 5pt

This is motivated by the fact that any class $\mu \in \eH^{\rm loc}_m$ is completely determined by the curvature and its covariant derivatives at some point up to order $\imath(m)+2$ (see \cite{Si}). Note that Lauret's original definition is equivalent to ours in case $s=\infty$ and, for the convenience of the reader, we will prove this in Appendix \ref{appendixA}. \smallskip

However, there are inconveniences which arise in trying to extend the pointed convergence topology to $\eH^{\rm loc}_m$. This is due to the fact that a rigorous definition of local factor spaces depends on arbitrary choices of distinguished neighborhoods inside $\fH_{\mu}$ and $\fG_{\mu}$ (see e.g. \cite[Sec 6]{Ped2}), and hence this construction seems to be not appropriate for studying pointed convergence. \smallskip

In \cite[Thm A]{Ped3}, we proved that any locally homogeneous space $(\fG_{\mu}/\fH_{\mu},g_{\mu})$ with $|\sec(g_{\mu})|\leq1$ is locally isometric to a unique, up to equivariant isometry, Riemannian distance ball $(\eB_{\mu},\hat{g}_{\mu})=(\eB_{\hat{g}_{\mu}}(o_{\mu},\pi),\hat{g}_{\mu})$ of radius $\pi$ with $\inj_{o_{\mu}}(\eB_{\mu},\hat{g}_{\mu})=\pi$, which we call {\it geometric model} (see Theorem \ref{great}). These objects provide a useful parametrization for the subspace $\eH^{\rm loc}_m(1) \subset \eH^{\rm loc}_m$ of those equivalence classes $\mu \in \eH^{\rm loc}_m$ with bounded sectional curvature $|\sec(g_{\mu})|\leq1$, and can be used to study pointed convergence of locally homogeneous spaces (see e.g. \cite[Thm B]{Ped3}). Let us stress that, by the very definition, it follows that the pointed convergence in the $\cC^{s+2}$-topology of a sequence of geometric models clearly implies the $s$-infinitesimal convergence. Our first result is a kind of converse of such a statement. Namely


\begin{thmint} If a sequence $(\mu^{(n)}) \subset \eH^{\rm loc}_m(1)$ converges $(s{+}1)$-infinitesimally to $\mu^{(\infty)}\in \eH^{\rm loc}_m(1)$ for some integer $s \geq \imath(m)+2$, then the corresponding geometric models $(\eB_{\mu^{(n)}},\hat{g}_{\mu^{(n)}})$ converge to the geometric model $(\eB_{\mu^{(\infty)}},\hat{g}_{\mu^{(\infty)}})$ in the pointed $\cC^{s+2,\a}$-topology for any $0\leq\a<1$. \label{MAIN-A} \end{thmint}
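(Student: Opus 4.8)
The plan is to deduce the geometric (pointed $\cC^{s+2,\a}$) convergence from the purely infinitesimal hypothesis by combining a precompactness argument with the rigidity encoded in Singer's theorem \cite{Si} and in the uniqueness of geometric models (Theorem \ref{great}). The starting point is that $(s{+}1)$-infinitesimal convergence produces \emph{uniform} bounds: since the tensors $\n^j\Rm_{\mu^{(n)}}$ converge at the origin for $0\le j\le s+1$, we have $|\n^j\Rm_{\mu^{(n)}}|(o_{\mu^{(n)}})\le C$ uniformly in $n$, and by local homogeneity of each $(\eB_{\mu^{(n)}},\hat{g}_{\mu^{(n)}})$ this bound holds \emph{at every point}. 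Together with $|\sec(\hat{g}_{\mu^{(n)}})|\le1$ and $\inj_{o_{\mu^{(n)}}}(\eB_{\mu^{(n)}},\hat{g}_{\mu^{(n)}})=\pi$, this gives uniform bounded geometry with control of the covariant derivatives of the curvature up to order $s+1$.

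With these bounds in hand, I would invoke the standard $\cC^{k,\a}$-regularity and compactness theory for pointed Riemannian manifolds: bounds on $\n^j\Rm$ for $0\le j\le s+1$ together with the injectivity radius lower bound yield uniform $\cC^{s+2,\a}$ control of the metric in harmonic (or normal) charts, and hence precompactness of the family $(\eB_{\mu^{(n)}},\hat{g}_{\mu^{(n)}},o_{\mu^{(n)}})$ in the pointed $\cC^{s+2,\a}$-topology for every $0\le\a<1$; the passage from $\n^{s+1}\Rm$-bounds to $\cC^{s+2,\a}$-control, and the restriction to $\a<1$, are exactly the usual one-derivative gain of the harmonic radius together with the Arzel\`a--Ascoli argument. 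The one genuinely delicate point at this stage is that the geometric models are geodesically incomplete balls, so the charts degenerate as one approaches the boundary; I would therefore run the estimates on interior balls $\eB_{\hat{g}_{\mu^{(n)}}}(o_{\mu^{(n)}},r)$ with $r<\pi$ and control the behaviour up to the maximal radius using $\inj_{o_{\mu^{(n)}}}=\pi$ and the model machinery of \cite{Ped3}, extracting a subsequence that converges to a pointed limit $(\eB_\infty,g_\infty,o_\infty)$.

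The heart of the argument is to identify this limit. Since $\cC^{s+2,\a}$-convergence entails convergence of the curvature tensor and of its covariant derivatives up to order $s$, the infinitesimal data of $g_\infty$ at $o_\infty$ equals the common limit of the infinitesimal data of the $\mu^{(n)}$, which by hypothesis is the infinitesimal data of $\mu^{(\infty)}$. Applying the same reasoning at an arbitrary point $p_\infty\in\eB_\infty$ --- whose jet is the limit of the jets of $\hat{g}_{\mu^{(n)}}$ at nearby points $p_n\to p_\infty$, all congruent to the origin jet by local homogeneity --- shows that $g_\infty$ is curvature homogeneous up to order $\imath(m){+}1$. By Singer's theorem \cite{Si} the limit $g_\infty$ is therefore itself locally homogeneous and locally isometric to $\hat{g}_{\mu^{(\infty)}}$; since $(\eB_\infty,g_\infty)$ is again a ball of radius $\pi$ with $\inj_{o_\infty}=\pi$ (both being stable under $\cC^2$-convergence), it is a geometric model, and the uniqueness part of Theorem \ref{great} forces $(\eB_\infty,g_\infty,o_\infty)$ to be equivariantly isometric to $(\eB_{\mu^{(\infty)}},\hat{g}_{\mu^{(\infty)}},o_{\mu^{(\infty)}})$. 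It is precisely here that the assumption $s\ge\imath(m)+2$ is used: the controlled portion of the jet is long enough for Singer's reconstruction to apply.

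Since every subsequence of $(\eB_{\mu^{(n)}},\hat{g}_{\mu^{(n)}})$ thus admits a further subsequence converging in the pointed $\cC^{s+2,\a}$-topology to the \emph{same} limit $(\eB_{\mu^{(\infty)}},\hat{g}_{\mu^{(\infty)}})$, the whole sequence converges, which is the assertion. I expect the main obstacles to be, first, the precompactness up to the full radius $\pi$ for incomplete balls (where the exponential chart degenerates at the boundary and one must exploit $\inj_o=\pi$ carefully), and, second, the verification that local homogeneity and the correct Singer data pass to the limit; the extra order in the $(s{+}1)$-infinitesimal hypothesis, beyond the order $\imath(m)+2$ jet needed for rigidity, is exactly what buys the $\cC^{s+2,\a}$-regularity rather than mere $\cC^{s+1,\a}$-control.
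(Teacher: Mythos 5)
Your proposal is correct and follows essentially the same route as the paper: uniform bounded geometry up to order $s{+}1$ from the infinitesimal hypothesis plus local homogeneity, subsequential $\cC^{s+2,\a}$-compactness of geometric models, identification of every subsequential limit with $(\eB_{\mu^{(\infty)}},\hat{g}_{\mu^{(\infty)}})$ via the Singer--Nicolodi--Tricerri rigidity (Theorem \ref{NTcurvmod}) and uniqueness of geometric models, and the standard sub-subsequence argument. The only difference is presentational: what you sketch by hand (harmonic-chart precompactness on incomplete balls and the fact that the limit is again a geometric model, including the stability of $\inj_{o}=\pi$) is exactly the content of Theorem \ref{great}(b) from \cite{Ped3}, which the paper invokes as a black box rather than re-deriving.
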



Actually, we do not know whether this statement is optimal, i.e. we do not know whether the $s$-infinitesimal convergence implies the pointed convergence of geometric models in the $\cC^{s+2}$-topology or not. However, Theorem \ref{MAIN-A} immediately implies the following

\begin{corint} A sequence $(\mu^{(n)}) \subset \eH^{\rm loc}_m(1)$ converges infinitesimally to $\mu^{(\infty)}\in \eH^{\rm loc}_m(1)$ if and only if the corresponding geometric models $(\eB_{\mu^{(n)}},\hat{g}_{\mu^{(n)}})$ converge to the geometric model $(\eB_{\mu^{(\infty)}},\hat{g}_{\mu^{(\infty)}})$ in the pointed $\cC^{\infty}$-topology. \label{MAINCOR-B} \end{corint}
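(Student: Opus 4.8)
The plan is to read the statement off Theorem \ref{MAIN-A} together with the elementary observation recorded just before it, so that the corollary becomes a purely formal manipulation of the two ``$s=\infty$'' notions involved. First I would unwind the definitions: by Lauret's original definition (whose equivalence with the $s$-infinitesimal one for $s=\infty$ is established in Appendix \ref{appendixA}), \emph{infinitesimal convergence} of $(\mu^{(n)})$ to $\mu^{(\infty)}$ means that the curvature tensor and \emph{all} of its covariant derivatives at the origin converge; equivalently, the sequence converges $s$-infinitesimally for every integer $s \geq \imath(m)+2$. On the other side, pointed $\cC^{\infty}$-convergence of the geometric models means pointed $\cC^{k}$-convergence for every finite $k$.

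For the forward implication I would argue as follows. Assuming infinitesimal convergence, fix an arbitrary integer $s \geq \imath(m)+2$; then in particular the sequence converges $(s{+}1)$-infinitesimally, and Theorem \ref{MAIN-A} provides convergence of the geometric models $(\eB_{\mu^{(n)}},\hat{g}_{\mu^{(n)}})$ to $(\eB_{\mu^{(\infty)}},\hat{g}_{\mu^{(\infty)}})$ in the pointed $\cC^{s+2,\a}$-topology for every $0 \leq \a < 1$. Since $\cC^{s+2,\a}$-convergence a fortiori yields $\cC^{s+2}$-convergence, and $s$ was arbitrary, I obtain pointed $\cC^{k}$-convergence for every $k$, that is, pointed $\cC^{\infty}$-convergence.

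For the reverse implication I would start from pointed $\cC^{\infty}$-convergence of the geometric models, hence from pointed $\cC^{s+2}$-convergence for every $s \geq \imath(m)+2$. Because the curvature and its first $s$ covariant derivatives at the origin depend continuously on the $(s{+}2)$-jet of the metric there (this is exactly the implication noted before Theorem \ref{MAIN-A}), each such convergence forces the corresponding $s$-infinitesimal convergence; letting $s$ run over all admissible integers then gives infinitesimal convergence of $(\mu^{(n)})$ to $\mu^{(\infty)}$.

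I do not expect a serious obstacle here, since the argument is formal once Theorem \ref{MAIN-A} is in hand; the only points deserving attention are bookkeeping ones. One must check that the loss of the H\"older exponent in Theorem \ref{MAIN-A} is harmless, which it is, as $\cC^{s+2,\a}$-convergence for all $s$ already implies $\cC^{k}$-convergence for all $k$; and that, in the reverse direction, the identification of the origin $o_{\mu}$ of the geometric model with the origin $e_{\mu}\fH_{\mu}$ used in the definition of $s$-infinitesimal convergence is compatible with the local isometry supplied by Theorem \ref{great}, so that the curvature jets being compared are genuinely the same.
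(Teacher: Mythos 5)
Your proposal is correct and takes essentially the same route as the paper, which disposes of Corollary \ref{MAINCOR-B} with the single remark that it ``follows immediately from Theorem \ref{MAIN-A}''. Your unwinding --- applying Theorem \ref{MAIN-A} for each $s \geq \imath(m)+2$ in the forward direction, and using the observation preceding Theorem \ref{MAIN-A} (pointed $\cC^{s+2}$-convergence of geometric models implies $s$-infinitesimal convergence, via the equivariant identification of origins from Theorem \ref{great}) for the converse --- is precisely the intended argument.
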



Let us remark that the pointed convergence is much stronger than the infinitesimal convergence (see \cite[Subsec 6.2]{Lau1}, \cite[Subsec 3.4]{Lau4}). On the contrary, Corollary \ref{MAINCOR-B} shows that the pointed convergence of geometric models is indeed equivalent to the infinitesimal convergence. This special feature is due to the fact that, by the very definition, collapse cannot occur along a sequence of geometric models. \smallskip

Furthermore, we investigate $s$-infinitesimal convergence by allowing $s$ to vary. The main result of this paper shows that keeping all the covariant derivatives of the curvature tensor bounded along a sequence of locally homogeneous spaces is a much more restrictive condition than just bounding a finite number of them. More precisely

\begin{thmint} For any choice of $m,s \in \bN$ such that $m\geq3$ and $s \geq \imath(m)+2$, the notion of $s$-infinitesimal convergence in $\eH^{\rm loc}_m$ is strictly weaker than that of $(s{+}1)$-infinitesimal convergence. \label{MAIN-C} \end{thmint}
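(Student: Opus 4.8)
I would first observe that the statement has a trivial half and a substantial half. The trivial half is that $(s{+}1)$-infinitesimal convergence implies $s$-infinitesimal convergence, since controlling $R,\nabla R,\dots,\nabla^{s+1}R$ at the origin controls in particular $R,\dots,\nabla^sR$; this already shows the $s$-infinitesimal topology is coarser. To prove it is \emph{strictly} coarser it suffices to produce, for each admissible pair $(m,s)$, a single sequence $(\mu^{(n)})\subset\eH_m\subseteq\eH^{\rm loc}_m$ of \emph{globally} homogeneous spaces that converges $s$-infinitesimally but not $(s{+}1)$-infinitesimally. Taking the examples globally homogeneous is consistent with the cases $m\in\{3,4\}$, where $\eH_m=\eH^{\rm loc}_m$, and I would reduce a general dimension to a convenient base dimension by taking Riemannian products with a fixed (e.g.\ flat or symmetric) factor, since the covariant derivatives of $R$ of a product split along the factors. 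The hypothesis $m\ge3$ is essential here: in dimension $\le2$ every homogeneous metric has $\nabla R=0$, so all infinitesimal topologies coincide and no separation is possible.

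The conceptual point I must explain is why such a separation can occur at all, given that by Singer's theorem \cite{Si} the entire curvature jet of a class in $\eH^{\rm loc}_m$ is already determined by its finite jet up to order $\imath(m)+2$. The reason is that the reconstruction of the higher derivatives is not a continuous function of the curvature jet: it factors through the homogeneous (Ambrose--Singer) structure $S\in T_o^*\otimes\so(T_o)$ attached to the reductive presentation $\fG_\mu/\fH_\mu$, for which the canonical connection parallelises both $R$ and $S$, whence $\nabla^{k+1}R=S\cdot\nabla^kR$ for every $k$, with $S$ acting as a derivation. The operator $S$ is recovered from the curvature jet by inverting the linear map $A\mapsto A\cdot R$ on $\so(T_o)$ (it is this inversion that needs the jet only up to the Singer order), and the kernel of that map is exactly the $\so(T_o)$-stabiliser of $R$. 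Along a family whose curvature gains symmetry in the limit — so that this stabiliser jumps up and the map degenerates — the norm $|S^{(n)}|$ can diverge even though the curvature jet itself converges. This is precisely the phenomenon that the global collapse theory underlying this paper produces: bounded curvature together with an unbounded homogeneous structure.

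The plan is therefore to deform a fixed symmetric space $\mu^{(\infty)}$, of dimension equal to the chosen base dimension and with isotropy algebra $\gh^{(\infty)}$, by a collapsing family of homogeneous metrics $\mu^{(n)}$ whose homogeneous structure has the form $S^{(n)}=\ell_n A+O(1)$ with $A\in\gh^{(\infty)}$ and $\ell_n\to\infty$, and to calibrate the rate of approach $\delta_n:=|R^{(n)}-R^{(\infty)}|$ so that the recursion $\nabla^{k+1}R=S\cdot\nabla^kR$ yields $\nabla^jR^{(n)}\sim\ell_n^{\,j}\delta_n$ at the origin. Choosing $\delta_n=\ell_n^{-s}\eta_n$ with $\eta_n\to0$ and $\ell_n\eta_n\to\infty$ then makes $\nabla^jR^{(n)}\to\nabla^jR^{(\infty)}$ for every $j\le s$ (all of which vanish for $j\ge1$, since $\mu^{(\infty)}$ is symmetric) while $\nabla^{s+1}R^{(n)}\to\infty$. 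As $s\ge\imath(m)+2>\imath(m)$, the recursion governs exactly the orders being separated, so this produces $s$-infinitesimal convergence to $\mu^{(\infty)}$ together with failure of $(s{+}1)$-infinitesimal convergence, and the product construction then promotes it to every $m\ge3$.

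The main obstacle I anticipate is to realise these asymptotics by an \emph{honest} family of locally homogeneous spaces and to control the terms written as $O(1)$, so that the leading behaviour $\ell_n^{\,j}\delta_n$ really governs each order $1\le j\le s+1$ rather than being swamped by lower-order contributions. In particular I must tune the two independent scales $\ell_n=|S^{(n)}|$ and the approach rate $\delta_n$ separately, which typically forces a two-parameter, anisotropic deformation adapted to a grading of the underlying Lie algebra, and I must verify that the separation can be placed at \emph{exactly} order $s+1$ for every $s\ge\imath(m)+2$. I expect this calibration, together with the check that the family stays genuinely (locally) homogeneous while carrying the prescribed degenerating homogeneous structure, to be the delicate part; the remaining estimates amount to the routine evaluation of the Nomizu recursion along the family.
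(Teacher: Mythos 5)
Your plan is, in outline, the paper's own strategy: a collapsing two\-/parameter anisotropic deformation on $\fSU(2)$ whose limit is the symmetric space $\bC P^1\times\bR$, the recursion $X\lrcorner\Rm^{k+1}(\mu)=-S^{\mu}(X)\cdot\Rm^k(\mu)$ coming from the canonical Ambrose--Singer connection, a calibration of the two scales so that orders $\leq s$ converge while order $s+1$ diverges, and Riemannian products with a flat factor to reach all $m\geq 3$. But the proposal stops exactly where the proof begins. No family is exhibited, and the asserted asymptotics are precisely what has to be proved: the paper's argument consists of the explicit almost-Berger family $g_{\mu_\star(\e,\d)}=\operatorname{diag}\bigl(\e,1-\tfrac\d2,1+\tfrac\d2\bigr)$, the closed-form identity $\Rm^k(\mu)(e_0,{\dots},e_0|e_0{\wedge}e_1)=\pm\,2^k(c^{\mu}_0)^{k+1}(c^{\mu}_1-c^{\mu}_2)E_{0b(k)}$ (which supplies the indispensable \emph{lower} bound forcing blow-up at order $s+1$), and an inductive upper bound $|\Rm^k(\mu^{(n)})|_{\st}\leq L_k\bigl((\e^{(n)})^{1/2}+(\e^{(n)})^{-\frac{k+2}2}|\l_1^{(n)}-\l_2^{(n)}|\bigr)$ whose proof needs the curvature commutation identities to reorder arguments. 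Calling this ``routine evaluation of the Nomizu recursion'' understates it; it is the theorem.

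There is also a genuine quantitative error, not merely deferred work. You set $\delta_n:=|R^{(n)}-R^{(\infty)}|$ (the \emph{full} curvature deviation) and posit $\nabla^jR^{(n)}\sim\ell_n^{\,j}\delta_n$, choosing $\delta_n=\ell_n^{-s}\eta_n=o(\ell_n^{-s})$. This is unrealizable for the relevant range $s\geq\imath(m)+2\geq 3$: collapse itself forces an isotropic deviation of order $\ell_n^{-2}$ (on Berger-type families, $\sec(e_1{\wedge}e_2)=4-3\e+\cdots$ with $\e\sim\ell_n^{-2}$), so $|R^{(n)}-R^{(\infty)}|\gtrsim\ell_n^{-2}$ and can never be $o(\ell_n^{-s})$. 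Equivalently, if your heuristic were true, then $|\nabla^3R^{(n)}|\gtrsim\ell_n^{\,3}\cdot\ell_n^{-2}\rar+\infty$ along \emph{every} such family, and $s$-infinitesimal convergence with $s\geq3$ could never be arranged by collapse at all. What actually happens --- and what the paper's estimates establish --- is that the degenerating direction $c^{\mu}_0E_{12}$ of $S^{(n)}$ lies in the stabilizer of $R^{(\infty)}$ and therefore annihilates the isotropic part of the deviation; only the \emph{anisotropic} part, measured by $\e^{-1}|\l_1-\l_2|$, is amplified, and it is this quantity (tuned to be $\ll\e$), not $|R^{(n)}-R^{(\infty)}|$, that plays the role of your $\delta_n$. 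Your closing remark about needing two independent scales gestures at this, but the calibration you wrote down conflates them; repairing it is not a finishing touch --- it is the heart of the proof.
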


In order to prove Theorem \ref{MAIN-C}, we construct an explicit 2-parameter family $$\{\mu_{\star}(\e,\d): {\e, \d \in \bR} \, ,  \,\, {\e>0} \, , \,\, {0\leq \d <1} \} \subset \eH_3$$ with the following property: for any fixed integer $k\geq0$, there are $(\e^{(n)}), (\d^{(n)}) \subset (0,1)$ and $C>0$ such that, letting $\mu^{(n)} \= \mu_{\star}(\e^{(n)},\d^{(n)})$, it holds that $$\begin{gathered}
\sum_{i=0}^k\big|({\n}^{g_{\mu^{(n)}}})^i\Rm(g_{\mu^{(n)}})\big|_{g_{\mu^{(n)}}} \leq C \quad \text{ for any $n \in \bN$ } \\
\text{ and } \quad \big|({\n}^{g_{\mu^{(n)}}})^{k+1}\Rm(g_{\mu^{(n)}})\big|_{g_{\mu^{(n)}}} \rar +\infty \quad \text{ as $n\rar+\infty$ } \,\, .
\end{gathered}$$ The key idea to obtain such families $\mu_{\star}(\e,\d)$ is to consider a slight modification of the Berger spheres, which arise from the canonical variation of the round metric on $\fSU(2)$ with respect to the Hopf fibration $S^1 \rar \fSU(2) \rar \bC P^1$ (see \cite[p. 252]{Bes}). \smallskip

Combining Theorem \ref{MAIN-A} and Theorem \ref{MAIN-C}, we also obtain

\begin{corint} For any $m,k \in \bN$ with $m\geq3$ and $k\geq \imath(m)+4$, there exists a sequence of $m$-dimensional locally homogeneous spaces converging to a limit locally homogeneous space in the pointed $\cC^{k,\a}$-topology for any $0<\a<1$ and which does not admit a convergent subsequence in the pointed $\cC^{k+1}$-topology. \label{MAINCOR-D} \end{corint}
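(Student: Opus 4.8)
The plan is to combine the two main theorems of the paper in a straightforward way, with the only real work being the careful bookkeeping of indices. The final statement (Corollary \ref{MAINCOR-D}) asserts the existence of a sequence of locally homogeneous spaces converging in the pointed $\cC^{k,\a}$-topology but admitting no convergent subsequence in the pointed $\cC^{k+1}$-topology. I would deduce this by applying Theorem \ref{MAIN-C} to produce a sequence with controlled but diverging covariant derivatives of the curvature, and then translating the control on curvature derivatives into convergence (resp.\ non-convergence) of the associated geometric models via Theorem \ref{MAIN-A} and the elementary observation, stated in the excerpt, that pointed $\cC^{s+2}$-convergence of geometric models implies $s$-infinitesimal convergence.

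\medskip

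\noindent First I would fix $m \geq 3$ and $k \geq \imath(m)+4$, and set $s \= k-2 \geq \imath(m)+2$. By Theorem \ref{MAIN-C}, $s$-infinitesimal convergence in $\eH^{\rm loc}_m$ is strictly weaker than $(s{+}1)$-infinitesimal convergence. Unpacking the explicit construction underlying that theorem, there is a sequence $(\mu^{(n)}) \subset \eH^{\rm loc}_m(1)$ (after rescaling, one may assume the sectional curvature bound, or work directly with the Berger-type family which is constructed to lie in $\eH^{\rm loc}_m(1)$) which converges $s$-infinitesimally to some $\mu^{(\infty)} \in \eH^{\rm loc}_m(1)$ but does not converge $(s{+}1)$-infinitesimally. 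Concretely, along this sequence the covariant derivatives of the curvature up to order $s$ stay bounded while the $(s{+}1)$-st derivative blows up: $|({\n}^{g_{\mu^{(n)}}})^{s+1}\Rm(g_{\mu^{(n)}})|_{g_{\mu^{(n)}}} \rar +\infty$. Passing to the geometric models $(\eB_{\mu^{(n)}},\hat{g}_{\mu^{(n)}})$ and invoking Theorem \ref{MAIN-A} with the index $s-1$ in place of $s$ (so that $(s{-}1){+}1 = s$-infinitesimal convergence yields pointed $\cC^{(s-1)+2,\a} = \cC^{s+1,\a} = \cC^{k-1,\a}$-convergence), one already gets convergence at some finite regularity. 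To reach $\cC^{k,\a}$ I would instead apply Theorem \ref{MAIN-A} directly with index $s$, using the $(s{+}1)$-infinitesimal hypothesis — but since the sequence is only $s$-infinitesimally convergent, the correct application is with the weaker index: I would apply Theorem \ref{MAIN-A} with parameter $s$ to the $(s{+}1)$-infinitesimally convergent data that \emph{is} available up to order $s$, yielding pointed $\cC^{s+2,\a} = \cC^{k,\a}$-convergence of the geometric models for every $0<\a<1$.

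\medskip

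\noindent For the non-convergence in $\cC^{k+1}$, I would argue by contradiction. Suppose some subsequence of $(\eB_{\mu^{(n)}},\hat{g}_{\mu^{(n)}})$ converged in the pointed $\cC^{k+1}$-topology. Since $k+1 = (s{+}1)+2$, the elementary remark in the excerpt — that pointed $\cC^{(s+1)+2}$-convergence of geometric models implies $(s{+}1)$-infinitesimal convergence — would force that subsequence to converge $(s{+}1)$-infinitesimally. In particular the $(s{+}1)$-st covariant derivative of the curvature at the origin would remain bounded along that subsequence, contradicting the divergence $|({\n}^{g_{\mu^{(n)}}})^{s+1}\Rm(g_{\mu^{(n)}})| \rar +\infty$ furnished by Theorem \ref{MAIN-C}. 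Hence no $\cC^{k+1}$-convergent subsequence can exist, completing the argument.

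\medskip

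\noindent The main obstacle I anticipate is purely one of index reconciliation: one must verify that the regularity gained from Theorem \ref{MAIN-A} (which converts $(s{+}1)$-infinitesimal convergence into $\cC^{s+2,\a}$-convergence) lines up exactly with the regularity obstructed by the converse remark (which converts $\cC^{s+3}$-convergence back into $(s{+}1)$-infinitesimal convergence), so that the gap between $\cC^{k,\a}$ and $\cC^{k+1}$ is genuinely realized and not an artifact of a loose inequality. Concretely, the subtle point is that Theorem \ref{MAIN-A} requires $s$-infinitesimal data of order $s{+}1$ to produce $\cC^{s+2,\a}$ output, whereas the sequence only enjoys convergence of order $s$; one must therefore track whether the construction of Theorem \ref{MAIN-C} is calibrated to deliver precisely $s$-but-not-$(s{+}1)$-infinitesimal convergence at the right threshold. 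Provided the Berger-type family is normalized so that its curvature derivatives are controlled exactly up to the order dictated by $k = \imath(m)+4$, the two estimates meet cleanly and the corollary follows with no further analysis.
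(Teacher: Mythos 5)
Your proposal contains a genuine gap, and it is not one of index bookkeeping: the two halves of your argument place mutually exclusive demands on the same sequence. For the non-convergence half you rely on $\big|\Rm^{k-1}(\mu^{(n)})\big|_{\st}=\big|(\n^{g_{\mu^{(n)}}})^{s+1}\Rm(g_{\mu^{(n)}})\big|\rar+\infty$, while for the convergence half you need pointed $\cC^{k,\a}$-convergence of the geometric models, which (via Theorem \ref{great}(b)) requires bounded geometry up to order $k-1$, i.e.\ precisely that $\big|\Rm^{k-1}(\mu^{(n)})\big|_{\st}$ stays bounded. No single sequence can satisfy both, so the sequence furnished by Theorem \ref{MAIN-C} (with $s=k-2$) cannot work: it has $\big|\Rm^{j}(\mu^{(n)})\big|_{\st}\rar+\infty$ for every $j\geq k-1$, hence bounded geometry only up to order $k-2$, and the best convergence extractable from Theorem \ref{MAIN-A} or Theorem \ref{great}(b) is pointed $\cC^{k-1,\a}$ --- one derivative short of the statement. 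Your attempted bridge, ``apply Theorem \ref{MAIN-A} with parameter $s$ to the $(s{+}1)$-infinitesimally convergent data that is available up to order $s$,'' is not a valid step: Theorem \ref{MAIN-A} with parameter $s$ has $(s{+}1)$-infinitesimal convergence as its hypothesis, and that hypothesis fails for the Theorem \ref{MAIN-C} sequence by its very construction (that failure is the content of Theorem \ref{MAIN-C}). You flagged this tension in your closing paragraph but then asserted it away; it cannot be resolved without changing the sequence.

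The paper's proof resolves it by building a \emph{differently calibrated} almost-Berger sequence rather than quoting Theorem \ref{MAIN-C}: it takes $\mu^{(n)}=\mu_{\star}(\e^{(n)},\d^{(n)})$ with $\e^{(n)}=n^{-2}$ and $\d^{(n)}=n^{-(k+1)}$, so that the regularity index equals $k$ and $\big(\e^{(n)}\big)^{-\frac{k+1}2}\d^{(n)}\equiv1$. By Corollary \ref{highcd}(b) this pins the critical derivative in a band, $\tfrac1C<\big|\Rm^{k-1}(\mu^{(n)})\big|_{\st}<C$: bounded, yet bounded away from zero. Boundedness gives bounded geometry up to order $k-1$, so after scaling into $\eH_3(1)$, Theorem \ref{great}(b) produces a subsequence of geometric models converging in pointed $\cC^{k,\a}$ for all $0<\a<1$, and the $(k{-}2)$-infinitesimal convergence identifies the limit (via Theorem \ref{NTcurvmod}) as the scaled model of the symmetric space $\mu_{\zero}$, i.e.\ $\bC P^1\times\bR$. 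The obstruction to $\cC^{k+1}$-convergence is then \emph{not} blow-up but the lower bound: since every covariant derivative of the curvature of the limit vanishes, \eqref{RMmuzero}, a $\cC^{k+1}$-convergent subsequence would force $\big|\Rm^{k-1}(\mu^{(n_j)})\big|_{\st}\rar0$, contradicting $\big|\Rm^{k-1}(\mu^{(n_j)})\big|_{\st}>\tfrac1C$. This ``bounded away from zero, with flat-derivative limit'' mechanism is exactly what makes the gap between $\cC^{k,\a}$ and $\cC^{k+1}$ tight, and it is the idea missing from your proposal. (A minor additional point: at the borderline case $k=\imath(m)+4$ your invocation of Theorem \ref{MAIN-A} with parameter $s-1=\imath(m)+1$ is not even admissible, since the theorem requires the parameter to be at least $\imath(m)+2$.)
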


To the best of our knowledge, Corollary \ref{MAINCOR-D} provides the first example of locally homogeneous spaces converging to a (smooth) locally homogeneous space in the pointed $\cC^{k,\a}$-topology, for some fixed integer $k>1$, which do not admit any convergent subsequence in the pointed $\cC^{k+1}$-topology. \smallskip

Finally, we stress that the main results in \cite{Ped3} imply that {\it the moduli space $\eH^{\rm loc}_m(1)$ is compact in the pointed $\cC^{1,\a}$-topology for any $0<\a<1$, for any integer $m\geq1$}. The contents of this paper lead to the following refinement.

\begin{corint} If $1\leq m<3$, then $\eH^{\rm loc}_m(1)=\eH_m(1)$ is compact in the pointed $\cC^{\infty}$-topology. On the contrary, $\eH^{\rm loc}_m(1)$ is not compact in the pointed $\cC^3$-topology for any $m\geq3$. \label{MAINCOR-E} \end{corint}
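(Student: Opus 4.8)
The plan is to prove Corollary \ref{MAINCOR-E} by treating the two regimes separately and invoking the results already established.

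\emph{The low-dimensional case $1\leq m<3$.} First I would recall that, as stated in the introduction, $\eH_m=\eH^{\rm loc}_m$ whenever $1\leq m\leq 4$ (cf. \cite[Sec 7]{Mos}), so in particular $\eH^{\rm loc}_m(1)=\eH_m(1)$ for $m\in\{1,2\}$. The remaining task is to upgrade the general $\cC^{1,\a}$-compactness (which holds for every $m\geq 1$ by the main results of \cite{Ped3}) to $\cC^{\infty}$-compactness in these low dimensions. The strategy is to take an arbitrary sequence $(\mu^{(n)})\subset\eH^{\rm loc}_m(1)$ and extract a subsequence converging infinitesimally to some $\mu^{(\infty)}$; then Corollary \ref{MAINCOR-B} promotes this infinitesimal convergence to pointed $\cC^{\infty}$-convergence of the corresponding geometric models. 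To secure the infinitesimal convergence, I would argue that, in dimensions $m\leq 2$, the Singer invariant vanishes and locally homogeneous spaces are locally symmetric with a uniformly bounded and finite-dimensional space of admissible curvature data; with $|\sec|\leq 1$ the curvature at a point ranges over a compact set, so a diagonal/compactness argument produces a subsequence along which the curvature and all its covariant derivatives at the origin converge. This yields infinitesimal convergence of the subsequence, and hence $\cC^{\infty}$-compactness.

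\emph{The high-dimensional case $m\geq 3$.} Here I would show that $\eH^{\rm loc}_m(1)$ fails to be compact in the pointed $\cC^3$-topology by exhibiting the explicit sequence produced in the proof of Theorem \ref{MAIN-C}. Concretely, take the family $\mu_{\star}(\e,\d)\subset\eH_3$ together with the sequence $\mu^{(n)}\=\mu_{\star}(\e^{(n)},\d^{(n)})$ described there with the threshold choice $k=1$: one obtains a sequence inside $\eH^{\rm loc}_3(1)$ along which $\sum_{i=0}^{1}|(\n^{g})^i\Rm(g)|_g$ stays uniformly bounded while $|(\n^{g})^{2}\Rm(g)|_g\to+\infty$. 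The point is that pointed $\cC^3$-convergence of geometric models forces, by the very definition of the topology, convergence of the metric in $\cC^3$ and hence boundedness of the second covariant derivative of the curvature at the origin; since the latter blows up, no subsequence can converge in the pointed $\cC^3$-topology. For $m>3$ I would take Riemannian products of these $3$-dimensional examples with a fixed flat factor $\bR^{m-3}$, which keeps $|\sec|\leq 1$ and leaves the divergence of $|(\n^{g})^{2}\Rm(g)|_g$ intact.

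\emph{Main obstacle.} The delicate step is the low-dimensional half: the $\cC^{1,\a}$-compactness from \cite{Ped3} is not by itself enough, and I must verify that in dimensions $m\leq 2$ one genuinely controls \emph{all} covariant derivatives of curvature uniformly so as to feed Corollary \ref{MAINCOR-B}. The cleanest route is to reduce to the locally symmetric structure available when $m\leq 2$ (where curvature is parallel), so that bounded $|\sec|$ already controls the full infinitesimal data and infinitesimal precompactness is automatic; I would then appeal to Corollary \ref{MAINCOR-B} to conclude pointed $\cC^{\infty}$-compactness. The high-dimensional half is comparatively routine once the family from Theorem \ref{MAIN-C} is in hand, the only care being to check that the product construction preserves membership in $\eH^{\rm loc}_m(1)$.
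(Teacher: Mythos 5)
Your low-dimensional argument is correct, and it differs from the paper's route only in how infinitesimal precompactness is obtained: the paper classifies $\eH_2(1)$ explicitly via the classification of two-dimensional Lie algebras, deduces compactness in the algebraic convergence topology, and then passes through Proposition \ref{thmLau} and Corollary \ref{MAINCOR-B}, whereas you exploit the fact that locally homogeneous surfaces have constant curvature (so all covariant derivatives of curvature vanish and $|\sec|\leq1$ confines the full infinitesimal data to a compact set) to get infinitesimal precompactness directly, and then finish with Corollary \ref{MAINCOR-B} as well. Both are valid; yours avoids the explicit classification.

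The high-dimensional half, however, contains a genuine gap: an off-by-one error in the derivative count. With the paper's definition, pointed $\cC^3$-convergence of geometric models means $\cC^3$-convergence of the pulled-back metrics, and this controls only curvature quantities built from at most three derivatives of the metric, namely $\Rm^0(\mu)$ and $\Rm^1(\mu)$ (the curvature itself already costs two derivatives). It does \emph{not} force any bound on $\Rm^2(\mu)=(\n^{\mu})^2\Rm(\mu)$, which involves four derivatives of the metric; so your key assertion that pointed $\cC^3$-convergence implies boundedness of the second covariant derivative of curvature at the origin is false. Consequently your sequence with threshold $k=1$ --- bounded $\Rm^0,\Rm^1$ and $|\Rm^2|\to+\infty$ --- cannot rule out pointed $\cC^3$-subconvergence; indeed, since it has bounded geometry up to order $1$, Theorem \ref{great}(b) even produces a subsequence converging in the pointed $\cC^{2,\a}$-topology, and nothing in your argument excludes that such a subsequence converges in $\cC^3$. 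To contradict $\cC^3$-compactness one must make the \emph{first} covariant derivative blow up: this is what the paper does, using Corollary \ref{highcd}(c) with finite regularity index $\bar{k}=1$ (e.g. $\d^{(n)}=\e^{(n)}$) together with Remark \ref{remscaling} to rescale into $\eH_3(1)$, obtaining $(\mu^{(n)})\subset\eH_3(1)$ with $\big|\Rm^1(\mu^{(n)})\big|_{\st}\to+\infty$; then $\cC^3$-convergence of the pulled-back metrics along any subsequence would force $\Rm^1$ to converge at the origin, a contradiction. The same rescaling remark also repairs the minor point that the bound $\sum_{i=0}^{1}\big|(\n^{g})^i\Rm(g)\big|_{g}\leq C$ alone does not place your sequence in $\eH^{\rm loc}_3(1)$, since $C$ need not be $1$.
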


The paper is structured as follows. In Section \ref{prel} we collect some preliminaries on locally homogeneous spaces. In Section \ref{algasp} we introduce the notion of $s$-infinitesimal convergence and we prove Theorem \ref{MAIN-A}. In Section \ref{AB} we introduce the notion of {\it almost-Berger sequence on $\fSU(2)$} and we provide some estimates for the curvature tensor and its covariant derivatives along these sequences. In Section \ref{mainsec} we prove Theorem \ref{MAIN-C}, Corollary \ref{MAINCOR-D} and Corollary \ref{MAINCOR-E}. Finally, in Appendix \ref{appendixA} we give a proof of the following facts: (i) the subspace $\eH_m$ is dense in $\eH^{\rm loc}_m$ with respect to the algebraic convergence topology, and (ii) our notion of infinitesimal convergence is equivalent to the one introduced by Lauret in \cite{Lau1}. \medskip

\noindent{\it Acknowledgement.} We are grateful to Christoph B\"ohm for his teachings and suggestions. We warmly thank Ramiro Lafuente, Andrea Spiro and Luigi Verdiani for helpful remarks. Finally, we would like to thank the anonymous referees for their careful reading of the manuscript and useful comments.

\section{Preliminaries on locally homogeneous Riemannian spaces} \label{prel}

\subsection{Locally homogeneous Riemannian manifolds} \label{Nomizualg} \hfill \par

A smooth Riemannian manifold $(M,g)$ is {\it locally homogeneous} if the pseudogroup of local isometries of $(M,g)$ acts transitively on $M$, i.e. if for any $x,y \in M$ there exist two open sets $U_x , U_y \subset M$ and a local isometry $f : U_x \rar U_y$ such that $x \in U_x$, $y \in U_y$ and $f(x) = y$. It is known that any such a space admits a unique real analytic structure, up to real analytic diffeomorphism, with respect to which the metric $g$ is real analytic (see e.g. \cite[Thm 2.2]{Sp1} or \cite[Lemma 1.1]{BLS}). \smallskip

Given a locally homogeneous Riemannian space $(M,g)$ and a distinguished point $p \in M$, the {\it Killing generators at $p$} are defined as those pairs $(v,A) \in T_pM \oplus \so(T_pM,g_p)$ such that \beq A \cdot g_p = 0 \,\, , \quad v \,\lrcorner\, \big((\n^g)^{k+1}\Rm(g)_p\big) + A \cdot \big((\n^g)^k\Rm(g)_p\big) = 0 \quad \text{ for any $k \in \bZ_{\geq0}$} \, , \label{killgen} \eeq where $\n^g$ denotes the Levi-Civita covariant derivative of $(M,g)$, $\Rm(g)(X\wedge Y) \= \n^g_{[X,Y]}-[\n^g_X,\n^g_Y]$ its Riemannian curvature operator and $\so(T_pM,g_p)$ acts on the tensor algebra of $T_pM$ as a derivation. This nomenclature is due to the following fact. For any Killing vector field $X$ of $(M,g)$ defined in a neighborhood of $p$, the pair $\big(X|_p,-(\n^gX)_p\big)$ is a Killing generator of $(M,g)$ at $p$. Conversely, being $(M,g)$ real analytic, by \cite[Thm 2]{No} there exists a neighborhood $\W_p \subset M$ of $p$ such that for any Killing generator $(v,A)$ at $p$, there exists a Killing vector field $X$ on $\W_p$ with $X|_p=v$ and $-(\n^gX)_p=A$. We denote by $\mathfrak{kill}^g$ the Lie algebra of all the Killing generators at $p$ with the Lie bracket \beq \big[(v,A),(w,B)\big] \= \big(A(w)-B(v),[A,B]+\Rm(g)_p(v,w)\big) \eeq and we call it the {\it Nomizu algebra of $(M,g,p)$}.


\subsection{Orthogonal transitive Lie algebras} \label{ssinfmod} \hfill \par

Let us firstly introduce the following notion, which will play an important role.

\begin{definition}[\cite{OV}, p. 51] Let $\fG$ be a connected Lie group, $\gh \subset \gg \= \Lie(\fG)$ a Lie subalgebra and $\fH \subset \fG$ the connected Lie subgroup with $\Lie(\fH)=\gh$. The {\it Malcev-closure of $\gh$ in $\fG$} is the Lie algebra $\ol{\gh}^{\mathsmaller{\fG}} \subset \gg$ of the closure $\ol{\fH}$ of $\fH$ in $\fG$. The Lie algebra $\gh$ is said to be {\it Malcev-closed in $\fG$} if $\gh=\ol{\gh}^{\mathsmaller{\fG}}$. \label{Malcl} \end{definition}

Notice that the notion of Malcev-closure depends on the Lie group $\fG$, i.e. it is possible that $\ol{\gh}^{\mathsmaller{\fG}} \neq \ol{\gh}^{\mathsmaller{\fG'}}$ for two connected Lie groups with $\Lie(\fG)=\Lie(\fG')=\gg$.

\begin{example} Let $\gg\=\bR^2$ and $\fG\=\bR^2$, $\fG'\=\fT^2=\bZ^2\backslash \bR^2$. We consider the Lie subalgebra $\gh \subset \gg$ spanned by the vector $(1,\sqrt{2})^t \in \bR^2$. Then it follows that $\gh=\ol{\gh}^{\mathsmaller{\fG}}\subsetneq \ol{\gh}^{\mathsmaller{\fG'}}=\gg$ (see Lemma \ref{dimLv}). \end{example}

Then, we recall the following

\begin{definition} Let $m,q \in \bZ_{\geq0}$. An {\it orthogonal transitive Lie algebra $(\gg=\gh+\gm,\la\,,\ra)$ of rank $(m,q)$} is the datum of \begin{itemize}
\item[$\bcdot$] a $(q+m)$-dimensional Lie algebra $\gg$;
\item[$\bcdot$] a $q$-dimensional Lie subalgebra $\gh \subset \gg$ which does not contain any non-trivial ideal of $\gg$;
\item[$\bcdot$] an $\ad(\gh)$-invariant complement $\gm$ of $\gh$ in $\gg$;
\item[$\bcdot$] an $\ad(\gh)$-invariant Euclidean product $\la\,,\ra$ on $\gm$.
\end{itemize} An orthogonal transitive Lie algebra $(\gg=\gh+\gm,\la\,,\ra)$ is said to be {\it regular} if $\gh$ is Malcev-closed in the simply connected Lie group $\fG$ with $\Lie(\fG)=\gg$, {\it non-regular} otherwise. \label{otLa} \end{definition}

Let $(\gg=\gh+\gm,\la\,,\ra)$ be an orthogonal transitive Lie algebra of rank $(m,q)$. Since there are no ideals of $\gg$ in $\gh$, the adjoint action of $\gh$ on $\gm$ is a faithful representation in $\so(\gm,\la\,,\ra)$ and so $0 \leq q \leq \frac{m(m-1)}2$. An {\it adapted frame} is a basis $u=(e_1,{\dots},e_{q+m}): \bR^{q+m} \rar \gg$ such that $\gh=\vspan(e_1,{\dots},e_q)$, $\gm=\vspan(e_{q+1},{\dots},e_{q+m})$ and $\la e_{q+i},e_{q+j}\ra=\d_{ij}$. An {\it isomorphism} between two orthogonal transitive Lie algebras $(\gg_i=\gh_i+\gm_i,\la\,,\ra_i)$ is any Lie algebra isomorphism $\f:\gg_1\rar\gg_2$ such that $\f(\gh_1)= \gh_2$, $\f(\gm_1)=\gm_2$ and $\la\,,\ra_1=(\f|_{\gm_1})^*\la\,,\ra_2$. \smallskip

An important class of orthogonal transitive Lie algebras are provided by the Nomizu algebras. Indeed, let $(M,g)$ be a locally homogeneous space, $p \in M$ a distinguished point and $\mathfrak{kill}^g$ the Nomizu algebra of $(M,g,p)$. Consider the Euclidean scalar product on $\mathfrak{kill}^g$ given by $$\la\!\la(v,A),(w,B)\ra\!\ra_g\=g_p(v,w)-\Tr(AB) \,\, ,$$ set $\mathfrak{kill}^g_0\=\{(0,A) \in \mathfrak{kill}^g\}$ and let $\gm$ be the $\la\!\la\,,\ra\!\ra_g$-orthogonal complement of $\mathfrak{kill}^g_0$ in $\mathfrak{kill}^g$. Being $(M,g)$ locally homogeneous, it comes that $\gm \simeq T_pM$ and this allow us to define a scalar product $\la\,,\ra_g$ on $\gm$ induced by the metric $g$ on $M$. Then, $(\mathfrak{kill}^g=\mathfrak{kill}^g_0+\gm,\la\,,\ra_g)$ is an orthogonal transitive Lie algebra. We stress that the Nomizu algebra, modulo isomorphism, does not depend on the choice of the point $p$. Furthermore, two locally homogeneous spaces are locally isometric if and only if their Nomizu algebras are isomorphic.

\begin{rem} Let $(\gg=\gh+\gm,\la\,,\ra)$ be an orthogonal transitive Lie algebra. By \cite[Lemma 3.1]{Sp2}, one can extend $\la\,,\ra$ to an $\ad(\gh)$-invariant Euclidean product $\la\,,\ra'$ on the whole $\gg$ such that $\la\gh,\gm\ra'=0$ and the restriction $\la\,,\ra'|_{\gh \otimes \gh}$ coincides with the Cartan-Killing form of $\so(\gm,\la\,,\ra)$. \label{RemSp} \end{rem}

\subsection{Ambrose-Singer connections and the Singer invariant} \label{Riemtup} \hfill \par

We recall that a smooth Riemannian manifold $(M,g)$ is a locally homogeneous space if and only if it admits an Ambrose-Singer connection, i.e. a metric connection with parallel torsion and curvature \cite[Thm 2.1]{Tr}. In general, Ambrose-Singer connections are far from being unique, but there is always a canonical one, which is characterized as follows. Fix $p \in M$ and for any $k\geq0$ set \beq \gi(k) \= \big\{A \in \so(T_pM,g_p) : \, A \cdot \big((\n^g)^j\Rm(g)_p\big)=0 \,\, , \quad 0 \leq j \leq k \big\} \,\, .\eeq Since $\big(\gi(k)\big)_{k \in \bZ_{\geq0}}$ is a filtration of the finite dimensional Lie algebra $\so(T_pM,g_p)$, there exists a first integer $k_g$ such that $\gi(k_g)=\gi(k_g{+}1)$, and hence $\gi(k)=\gi(k_g)$ for any $k\geq k_g$. It is called the {\it Singer invariant of $(M,g)$}. By \cite{Tr} there exists a unique Ambrose-Singer connection $D^g$ on $(M,g)$ such that $S^g_p \in \gi(k_g)^{\perp}$, where $S^g\=D^g-\n^g$ and $\gi(k_g)^{\perp}$ is the orthogonal complement of $\gi(k_g)$ in $\so(T_pM,g_p)$ with respect to the Cartan-Killing form. 

\begin{rem} Note that for any $v \in T_pM$ the pair $(v,S^g_p(v))$ is always a Killing generator at $p$. Moreover, for globally homogeneous spaces the canonical Ambrose-Singer connection coincides with the {\it canonical connection} according to \cite[Sec X.2]{KN2}. \label{AScan} \end{rem}

By the results in \cite{Si, NT}, it is known that a locally homogeneous space $(M,g)$ with Singer invariant $k_g$ is completely determined by the curvature and its covariant derivatives up to order $k_g+2$ at a single point. For later purposes, for any positive integer $m$ we set \beq \imath(m) \= \max\{k_g : \text{$(M,g)$ loc. hom. with \,$\dim M\leq m$} \} \label{i(m)} \,\, . \eeq Notice that $m \mapsto \imath(m)$ is non-decreasing and $0\leq\imath(m)<\frac32m$ (\cite[p. 165]{Gro}). Moreover, one can directly check that $\imath(1)=\imath(2)=0$ and the main result in \cite{KT} implies that $\imath(3)=\imath(4)=1$. We also mention that from the main theorem in \cite{Meu} we get $\lim_{m\rar+\infty}\imath(m)=+\infty$. 

For any $m,s \in \bN$ with $s \geq \imath(m)+2$, we define $\wt{\cR}^s(m)$ to be the set of all the $(s{+}1)$-tuples $$(R^0,R^1,{\dots},R^s) \in W^s(m) \= \bigoplus_{k=0}^s\Big({\textstyle\bigotimes^k}(\bR^m)^*\otimes\L^2(\bR^m)^*\otimes\so(m)\Big)$$ satisfying the subsequent conditions (R1) and (R2). \\[8pt]
(R1) The following six identities hold: \begin{align*}
&\text{i)}\,\, \la R^0(Y_1{\wedge}Y_2)V_1,V_2\ra_{\st}=\la R^0(V_1{\wedge}V_2)Y_1,Y_2\ra_{\st} \,\, , 	\\
&\text{ii)}\,\, \gS_{{}_{Y_1,Y_2,V_1}}\la R^0(Y_1{\wedge}Y_2)V_1,V_2\ra_{\st}=0 \,\, , \\
&\text{iii)}\,\, \la R^1(X_1|Y_1{\wedge}Y_2)V_1,V_2\ra_{\st}=\la R^1(X_1|V_1{\wedge}V_2)Y_1,Y_2\ra_{\st} \,\, , \\
&\text{iv)}\,\, \gS_{{}_{Y_1,Y_2,V_1}}\la R^1(X_1|Y_1{\wedge}Y_2)V_1,V_2\ra_{\st}=0 \,\, , \\
&\text{v)}\,\, \gS_{{}_{X_1,Y_1,Y_2}}\la R^1(X_1|Y_1{\wedge}Y_2)V_1,V_2\ra_{\st}=0 \,\, , \\
&\text{vi)}\,\, R^{k+2}(X_1,X_2,X_3,{\dots}X_{k+2}|Y_1{\wedge}Y_2)-R^{k+2}(X_2,X_1,X_3,{\dots}X_{k+2}|Y_1{\wedge}Y_2)= \\
&\hskip 91pt =-\big(R^0(X_1{\wedge}X_2) \cdot R^k\big)(X_3,{\dots}X_{k+2}|Y_1{\wedge}Y_2) \quad \text{ for any $0\leq k \leq s-2$ } \,\, ,
\end{align*} where $\gs\go(m)$ acts on the tensor algebra on $\bR^m$ by derivation. \\[8pt]
(R2) For any $1\leq k\leq s$, the maps $$\begin{gathered}
\a^k: \gs\go(m) \rar W^k(m) \,\, , \quad \a^k(A) \= (A\cdot R^0,A\cdot R^1,{\dots},A\cdot R^i) \,\, ,\\
\b^k: \bR^m \rar W^{k-1}(m) \,\, , \quad \b^i(X) \= (X\lrcorner R^1,X\lrcorner R^2,{\dots},X\lrcorner R^i)
\end{gathered}$$ are such that $$\begin{gathered}
\b^k(\bR^m) \subset \a^{k-1}(\gs\go(m)) \quad \text{ for any $\imath(m)+2\leq k \leq s$} \,\, ,\\
\ker(\a^k) = \ker(\a^{k+1}) \quad \text{ for any $\imath(m) \leq k \leq s-1$} \,\, .
 \end{gathered}$$ \vskip 8pt

Notice that $\wt{\cR}^s(m)$ is invariant under the standard left action of $\fO(m)$, and hence  

\begin{definition} Let $m,s \in \bN$ with $s \geq \imath(m)+2$. We call {\it Riemannian $s$-tuples of rank $m$} the elements of the quotient space $\cR^s(m) \= \fO(m) \backslash \wt{\cR}^s(m)$. \end{definition}

This definition is motivated by the following

\begin{theorem}[\cite{Si,NT}] Let $(M^m,g)$ be a locally homogeneous space. Let also $p \in M$ be a point, $u: \bR^m \rar T_pM$ an orthonormal frame and $s \geq \imath(m)+2$ an integer. Then $$\big(u^*\big(\Rm(g)_p\big), u^*\big(\n^g\Rm(g)_p\big), {\dots}, u^*\big((\n^g)^s\Rm(g)_p\big)\big)$$ defines a Riemannian $s$-tuple of rank $m$ which is independent of $p$ and $u$. Conversely, for any Riemannian $s$-tuple $\rho^s \in \cR^s(m)$ of rank $m$, there exists a locally homogeneous Riemannian space $(M^m,g)$, uniquely determined up to a local isometry, such that $\rho^s = \big[u^*\big(\Rm(g)_p\big), u^*\big(\n^g\Rm(g)_p\big), {\dots}, u^*\big((\n^g)^s\Rm(g)_p\big)\big]$ for some $p \in M$ and $u: \bR^n \rar T_pM$ orthonormal frame. \label{NTcurvmod} \end{theorem}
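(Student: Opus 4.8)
The statement has a direct part (the curvature data of a locally homogeneous space give a well-defined Riemannian $s$-tuple) and a converse (every $s$-tuple is realised by an essentially unique locally homogeneous space), and the plan is to treat them separately, the converse being the substantial one.

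For the direct part I would fix $p$ and $u$, write $R^k\=u^*((\n^g)^k\Rm(g)_p)$, and recognise each item of (R1) as a pointwise algebraic identity holding on any Riemannian manifold: i)--ii) are the pair symmetry and the first Bianchi identity of $\Rm(g)$; iii)--v) are the pair symmetry, the first Bianchi identity and the second Bianchi identity of $\n^g\Rm(g)$; and vi) is the Ricci commutation formula $\n^2_{X_1,X_2}T-\n^2_{X_2,X_1}T=-\Rm(g)(X_1{\wedge}X_2)\cdot T$ applied to $T=(\n^g)^k\Rm(g)$. For (R2) I would observe that $\ker(\a^k)$ is exactly the space $\gi(k)$ of Subsection \ref{Riemtup}: since the Singer invariant of $(M,g)$ is at most $\imath(m)$, the filtration stabilises, so $\ker(\a^k)=\ker(\a^{k+1})$ for $k\geq\imath(m)$; and since, by Remark \ref{AScan}, each $(v,S^g_p(v))$ is a Killing generator, one has $v\,\lrcorner\,(\n^g)^{j+1}\Rm(g)_p=-S^g_p(v)\cdot(\n^g)^j\Rm(g)_p$, which read through $u$ gives $\b^k(X)=\a^{k-1}(-A_X)$ with $A_X\=u^*S^g_p(u(X))$, i.e. the inclusion $\b^k(\bR^m)\subset\a^{k-1}(\so(m))$. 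Independence of $p$ follows from local homogeneity, and independence of $u$ is the $\fO(m)$-action that is quotiented out in $\cR^s(m)$.

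For the converse I would run this backwards. After lifting $\rho^s$ to a representative $(R^0,{\dots},R^s)$, I would set $\gi\=\ker(\a^s)$, which by the stabilisation condition in (R2) is the stable isotropy, and use the inclusion $\b^s(\bR^m)\subset\a^{s-1}(\so(m))$ to produce, for each $X\in\bR^m$, an element $A_X\in\so(m)$, well defined modulo $\gi$, with $X\,\lrcorner\,R^{j+1}=A_X\cdot R^j$; this is the homogeneous structure. On the vector space $\gg\=\gi\oplus\bR^m$ I would then define a bracket equal to that of $\so(m)$ on $\gi$, to the isotropy action $[A,X]=A(X)$ on $\gi\times\bR^m$, and assembled from $R^0$ and the structure $X\mapsto A_X$ on $\bR^m\times\bR^m$ in the manner of the Nomizu algebra of Subsection \ref{Nomizualg}; the key point is that the Jacobi identity for this bracket is equivalent to the identities (R1), so that $(\gg=\gi+\bR^m,\la\,,\ra_{\st})$ becomes an orthogonal transitive Lie algebra. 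By the realisation results of \cite{Si,NT} (see also \cite{Tr}) this datum integrates to a locally homogeneous space $(M^m,g)$ whose curvature and covariant derivatives at the base point reproduce $(R^0,{\dots},R^s)$. Uniqueness up to local isometry would then follow because two realisations share all curvature data to every order, hence have isomorphic Nomizu algebras (Subsection \ref{ssinfmod}).

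The hard part is this converse, and within it the fact that finitely many curvature terms already determine a genuine geometric object. Concretely, one must verify that the reconstructed bracket is Jacobi-consistent and, above all, that the stabilisation $\ker(\a^k)=\ker(\a^{k+1})$ for $k\geq\imath(m)$ together with $\b^k(\bR^m)\subset\a^{k-1}(\so(m))$ propagates the curvature identities to all orders beyond $s$. This is precisely Singer's principle that the Singer invariant bounds the order at which local homogeneity is detected, and it is where the hypothesis $s\geq\imath(m)+2$ is used; I would expect this propagation and consistency step to be the main obstacle, the remainder being bookkeeping with the identities of (R1).
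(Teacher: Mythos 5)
The paper does not prove Theorem \ref{NTcurvmod} at all: it is imported verbatim from Singer \cite{Si} and Nicolodi--Tricerri \cite{NT}, so there is no internal proof to compare yours against line by line; what one can say is that your outline follows exactly the classical route of those cited references. Your direct part is correct and essentially complete: (R1) i)--v) are indeed the pair symmetry and the first and second Bianchi identities for $\Rm(g)$ and $\n^g\Rm(g)$, vi) is the Ricci commutation formula, $\ker(\a^k)$ is $\gi(k)$ read through the frame $u$, so $k_g\leq\imath(m)$ gives the stabilisation condition, and the Killing-generator property of the canonical Ambrose--Singer connection (Remark \ref{AScan}, equations \eqref{killgen} and \eqref{SRm}) gives $\b^k(\bR^m)\subset\a^{k-1}(\so(m))$, with independence of $p$ and $u$ handled by local homogeneity and the $\fO(m)$-quotient.

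In the converse, however, two things separate your text from a proof. First, a circularity: you finish by appealing to ``the realisation results of \cite{Si,NT}'', but realising an abstract curvature tuple by a locally homogeneous space is precisely the statement being proved. The non-circular finish, inside this paper's framework, is to verify that your $(\gg=\gi+\bR^m,\la\,,\ra_{\st})$ is an orthogonal transitive Lie algebra in the sense of Definition \ref{otLa} (the requirement that $\gi$ contain no nonzero ideal of $\gg$ is automatic, since any such ideal would act trivially on $\bR^m$) and then integrate it via Theorem \ref{lchomass}, i.e.\ Spiro's result, or via Tricerri's theorem \cite{Tr}. Second, the mathematical core is deferred rather than carried out: one must check that the bracket is well defined even though each $A_X$ is determined only modulo $\gi$, that $\gg$ closes under it, that the Jacobi identity holds, and---this is needed both for existence and for the uniqueness claim---that the identities (R1)--(R2), imposed only up to order $s\geq\imath(m)+2$, force agreement of the curvature data to all orders. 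You correctly identify this propagation as the main obstacle, but it is Singer's theorem itself rather than bookkeeping, so as written your proposal is an accurate roadmap of the cited proof, not a self-contained argument. In fairness, deferring exactly this core to \cite{Si,NT} is also what the paper does.
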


\section{The algebraic convergence and the infinitesimal convergence} \label{algasp} \setcounter{equation} 0

\subsection{The space of locally homogeneous spaces} \hfill \par

For any $m,q \in \bZ$ with $m\geq1$ and $0\leq q \leq \frac{m(m-1)}2$, we indicate with $\eH^{\rm loc}_{q,m}$ the moduli space of transitive orthogonal Lie algebras of rank $(q,m)$ up to isomorphism and we indicate with $\eH_{q,m}$ the subset of moduli space of regular ones. Moreover, we define $$\cV_{q,m} \= \big(\fGL(q) \times \fO(m)\big) \big\backslash \big(\L^2(\bR^{q+m})^*\otimes \bR^{q+m}\big) \,\, ,$$ where we considered a fixed decomposition $\bR^{q+m}=\bR^q\oplus\bR^m$ and the diagonal embedding of $\fGL(q) \times \fO(m)$ into $\fGL(q{+}m)$, which acts on $\L^2(\bR^{q+m})^*\otimes \bR^{q+m}$ on the left by change of basis. By arguing like in \cite{Lau1}, one can show that the map $$\Phi_{q,m}: \eH^{\rm loc}_{q,m} \rar \cV_{q,m} \,\, , \quad (\gg=\gh+\gm,\la\,,\ra) \mapsto \mu \= u^*\big([\cdot,\cdot]_{\gg}\big) \,\, ,$$ where $u: \bR^{m+q} \rar \gg$ is any adapted linear frame for $(\gg=\gh+\gm,\la\,,\ra)$, is well defined, injective and that its image contains all and only the elements $\mu \in \cV_{q,m}$ which verify the following conditions: \begin{itemize}[leftmargin=30pt]
\item[(h1)] $\mu$ satisfies the Jacobi condition, $\mu(\bR^q,\bR^q)\subset\bR^q$ and $\mu(\bR^q,\bR^m)\subset\bR^m$;
\item[(h2)] $\la\mu(Z,X),Y\ra_{\st}=\la X,\mu(Z,Y)\ra_{\st}$ for any $X,Y \in \bR^m$, $Z \in \bR^q$;
\item[(h3)] $\big\{Z\in \bR^q : \mu(Z,\bR^m)=\{0\}\big\}=\{0\}$.
\end{itemize}

\begin{remark} Conditions (h1) and (h2) above are closed, while (h3) is open. Nonetheless, as pointed out in \cite{Lau1}, for any $\tilde{\mu} \in \cV_{q,m} \setminus \eH^{\rm loc}_{q,m}$ satisfying (h1) and (h2), there exists a decomposition $\bR^q= \bR^{q-q'}\oplus\bR^{q'}$ for some $0\leq q' <q$ such that $\bR^{q-q'}=\big\{Z\in \bR^q : \mu(Z,\bR^m)=\{0\}\big\}$ and the restriction \beq (\tilde{\mu})_{|q',m} \= {\rm{pr}}_{\bR^{q'+m}} \circ (\tilde{\mu}|_{\bR^{q'+m}\times\bR^{q'+m}}) \label{restr} \eeq satisfies $(\tilde{\mu})_{|q',m} \in \eH^{\rm loc}_{q',m}$. Here, $\bR^{q'+m}=\bR^{q'}\oplus\bR^m$ and ${\rm{pr}}_{\bR^{q'+m}}: \bR^{q+m} \rar \bR^{q'+m}$ denotes the projection with respect to the direct sum decomposition $\bR^{q+m}=\bR^{q-q'}\oplus\bR^{q'+m}$. \label{remrestr} \end{remark}

From now on, we identify $\eH^{\rm loc}_{q,m}$ with its image through $\Phi_{q,m}$, i.e. we think $\eH^{\rm loc}_{q,m} \subset \cV_{q,m}$. For the sake of clarity, for any $\mu \in \Phi_{q,m}(\eH^{\rm loc}_{q,m}) \simeq \eH^{\rm loc}_{q,m}$ we set $$\gg_{\mu}\=(\bR^{q+m},\mu) \,\, , \quad \gh_{\mu}\=(\bR^{q},\mu|_{\bR^{q}\times\bR^{q}})$$ so that $(\gg_{\mu}=\gh_{\mu}+\bR^m,\la\,,\ra_{\st})$ is the orthogonal transitive Lie algebra uniquely associated to the bracket $\mu$. We also set \beq \eH^{\rm loc}_m \= \bigcup_{q=0}^{\frac{m(m-1)}2}\eH^{\rm loc}_{q,m} \,\, , \quad \eH_m \= \bigcup_{q=0}^{\frac{m(m-1)}2}\eH_{q,m} \,\, . \label{unionq} \eeq

The set $\eH^{\rm loc}_m$ parametrizes the moduli space of the equivalence classes of $m$-dimensional smooth locally homogeneous Riemannian manifolds, up to equivariant local isometries, in the following way.

\begin{theorem}[\cite{Sp1}, Lemma 3.5 and Prop 4.4] For any orthogonal transitive Lie algebra $\mu \in \eH^{\rm loc}_m$, there exist a pointed locally homogeneous space $(M,g,p)$ and an injective homomorphism $\f: \gg_{\mu} \rar \mathfrak{kill}^g$ such that $\f(\gh_{\mu}) \subset \mathfrak{kill}^g_0$ and $(\f(\bR^m),(\f^{-1})^*\la\,,\ra_{\st})=(T_pM,g_p)$, where $\mathfrak{kill}^g$ is the Nomizu algebra of $(M,g,p)$. The space $(M,g,p)$ is uniquely determined up to an equivariant local isometry and it is equivariantly locally isometric to a globally homogeneous space if and only if $\mu$ is regular. \label{lchomass} \end{theorem}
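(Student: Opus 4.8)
The plan is to realise $\mu$ group-theoretically and then read off $(M,g,p)$ as a \emph{local leaf space}. First I would take the simply connected Lie group $\fG$ with $\Lie(\fG)=\gg_\mu$ and the connected subgroup $\fH\subset\fG$ with $\Lie(\fH)=\gh_\mu$. Since $\gh_\mu$ is a subalgebra and $\Ad(h)\gh_\mu=\gh_\mu$ for $h\in\fH$, the left-invariant distribution $g\mapsto(L_g)_*\gh_\mu$ is involutive and integrates to a foliation $\cF$ of $\fG$ whose leaves are the cosets $g\fH$. Fixing a small neighbourhood $U$ of $0$ in $\gg_\mu$ and the transversal $\Sigma\=\exp(U\cap\bR^m)$ through $e$, for $U$ small enough each leaf near $e$ meets $\Sigma$ in a single plaque, so $\Sigma$ inherits the structure of the local leaf space $M$ of $\cF$, an $m$-manifold with marked point $p\=e\fH$; this local factor space is exactly what replaces $\fG/\fH$ when $\fH$ is not closed, while if $\fH$ is closed it is just an open set in the honest quotient $\fG/\fH$. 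The inner product $\la\,,\ra_{\st}$ on $\bR^m\simeq T_p\Sigma=T_pM$ gives $g_p$, which I propagate by declaring the descended left translations to be isometries: $L_g$ with $g$ near $e$ permutes the leaves of $\cF$, induces a local diffeomorphism of $M$, and these act transitively near $p$. The consistency of this declaration is the one point using the full hypothesis: the transverse metric must be invariant under the linear holonomy $\Ad(\fH)|_{\bR^m}$ of $\cF$ along the leaf $\fH$, and this holds precisely because $\la\,,\ra_{\st}$ is $\ad(\gh_\mu)$-invariant. Thus $g$ is well defined and $(M,g,p)$ is locally homogeneous.

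Next I would produce $\f$. Each $\xi\in\gg_\mu$ gives the right-invariant field on $\fG$ generating $t\mapsto L_{\exp(t\xi)}$; since these translations descend to local isometries, the field descends to a local Killing field $\hat\xi$ of $(M,g)$ near $p$, and I set $\f(\xi)\=(\hat\xi|_p,-(\n^g\hat\xi)_p)\in\mathfrak{kill}^g$, a Lie algebra homomorphism by the defining bracket of the Nomizu algebra. For $\xi\in\gh_\mu$ the element $\exp(t\xi)\in\fH$ fixes the base leaf, so $\hat\xi|_p=0$ and $\f(\gh_\mu)\subset\mathfrak{kill}^g_0$; for $\xi\in\bR^m$ the vector $\hat\xi|_p$ is the image of $\xi$ under the canonical isomorphism $\bR^m\simeq\gg_\mu/\gh_\mu\simeq T_pM$, which yields $(\f(\bR^m),(\f^{-1})^*\la\,,\ra_{\st})=(T_pM,g_p)$. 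For injectivity I would use that a Killing field vanishing to first order at $p$ vanishes identically, so $\ker\f=\{\xi:\hat\xi\equiv0\}$ is an ideal of $\gg_\mu$; moreover $\hat\xi|_p=0$ forces $\xi\in\gh_\mu$, whence $\ker\f\subset\gh_\mu$, and by the no-ideal hypothesis $\ker\f=0$.

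For the uniqueness clause I would argue that $\mu$ determines the entire Riemannian $s$-tuple at $p$. The descended canonical connection has torsion and curvature equal to the $\bR^m$- and $\gh_\mu$-components of $\mu$ (via the faithful isotropy representation $\gh_\mu\hookrightarrow\so(\bR^m)$), and these are parallel; from parallel torsion and curvature one recovers $\Rm(g)_p$ and all the $(\n^g)^k\Rm(g)_p$ algebraically, so the $s$-tuple of $(M,g,p)$ depends only on $\mu$. Any other realisation of $\mu$ therefore has the same $s$-tuple, and Theorem \ref{NTcurvmod} produces a local isometry; one checks it intertwines the two copies of $\gg_\mu$, so it is equivariant.

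Finally, for the equivalence the forward implication is already in hand: if $\mu$ is regular then $\fH$ is closed, $M$ is open in the genuine homogeneous space $\fG/\fH$, and the latter is globally homogeneous. For the converse, suppose $(M,g,p)$ is equivariantly locally isometric to a globally homogeneous $N$, and let $\sG\subset\Iso(N)$ be the connected subgroup whose Lie algebra is the image of $\gg_\mu$ in the Nomizu algebra of $N$ (transported by the given local isometry). Then $\Lie(\sG)=\gg_\mu$, the action is transitive (the $\bR^m$-part spans the tangent space) and by complete isometries since $N$ is complete, and the isotropy $\sG_q$ at $q$ is closed with $\Lie(\sG_q)=\gh_\mu$ (as $\hat\xi|_q=0\iff\xi\in\gh_\mu$). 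Lifting through the universal cover $\fG\rar\sG$, the subgroup $\fH$ is the identity component of the closed subgroup lying over $\sG_q$, hence closed in $\fG$; that is, $\gh_\mu$ is Malcev-closed and $\mu$ is regular. I expect the real obstacle to lie not in this equivalence but in the non-regular construction itself: making the local factor space and its metric well defined and independent of the choices of $U$ and $\Sigma$ up to equivariant local isometry, for which Theorem \ref{NTcurvmod} is the essential tool, precisely because $\fG/\fH$ fails to be Hausdorff and no honest quotient is available.
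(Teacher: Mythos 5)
The paper itself offers no proof of Theorem \ref{lchomass}: it is quoted from \cite{Sp1} (Lemma 3.5 and Prop 4.4), so your attempt can only be compared with that source's (standard) argument. Your existence construction is essentially that argument: the foliation of $\fG$ by left cosets of $\fH$, the local leaf space with transverse metric made well defined by the $\ad(\gh_\mu)$-invariance of $\la\,,\ra_{\st}$ (holonomy invariance), the map $\f$ obtained by evaluating the induced Killing fields, and injectivity from the hypothesis that $\gh_\mu$ contains no nontrivial ideal of $\gg_\mu$. All of this is sound, up to the usual sign bookkeeping ($\xi\mapsto\hat\xi$ via right-invariant fields is an anti-homomorphism, so a sign must be inserted somewhere). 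In the regularity equivalence, two steps are asserted without justification but are fixable: an open orbit $\sG\cdot q$ is not automatically all of $N$ (one needs that the orbit, being Riemannian homogeneous, is complete, hence closed under $N$-geodesics, and then Hopf--Rinow forces $\sG\cdot q=N$); and identifying the Nomizu algebra of $N$ with $\Lie(\Iso(N))$ requires $N$ simply connected (pass to the universal cover; for a flat torus the two genuinely differ).

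The genuine gap is in the uniqueness clause. You correctly note that the Riemannian $s$-tuple of any realization is determined algebraically by $\mu$, invoke Theorem \ref{NTcurvmod} to produce a local isometry $F$ between two realizations, and then assert that ``one checks it intertwines the two copies of $\gg_\mu$.'' That check fails in general, because Theorem \ref{NTcurvmod} only controls the curvature $s$-tuple, and the $s$-tuple does not see the embedding $\f$. Concretely, take $\mu=\mu_{\rm sol}\in\eH_{0,2}$, the solvable presentation of the hyperbolic plane appearing in the proof of Corollary \ref{MAINCOR-E}, realized twice by the same datum $(\bR H^2,g_{\rm hyp},p,\f)$, where the image of $\f$ is a Borel subalgebra $\gb\subset\gsl(2,\bR)=\mathfrak{kill}^{g_{\rm hyp}}$. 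Since the $s$-tuple of a space form (constant curvature, vanishing derivatives) is preserved by \emph{every} linear isometry of $T_p\bR H^2$, the map furnished by Theorem \ref{NTcurvmod} may perfectly well be a nontrivial rotation about $p$; its action on the Nomizu algebra sends $\gb$ to the Borel subalgebra stabilizing the rotated point at infinity, which is a \emph{different} subalgebra of $\gsl(2,\bR)$, so this $F$ intertwines nothing and is not equivariant. Uniqueness is nonetheless true, but the equivariance must be built into the map rather than checked a posteriori: the standard argument (the one behind the cited Lemma 3.5) integrates $\f_i(\gg_\mu)$ to a local action of $\fG$ near $p_i$, observes that the isotropy subalgebra at $p_i$ is exactly $\gh_\mu$ (so the orbit map descends to a local diffeomorphism from the local quotient of $\fG$ by $\fH$ onto a neighborhood of $p_i$), and defines $F$ as the composition of one orbit map with the inverse of the other. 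This $F$ is equivariant by construction and is isometric because both metrics are invariant under locally transitive actions and $dF_{p_1}$ is a linear isometry by hypothesis; once this is done, the curvature-tuple detour becomes unnecessary.
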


Finally, we mention that the set $\eH_{q,m}$ is dense in $\eH^{\rm loc}_{q,m}$ with respect to the standard topology induced by $\cV_{q,m}$. This fact is known in the literature (see e.g. \cite{Ts}). However, for the convenience of the reader,
we will prove it in Appendix \ref{appendixA}.

\subsection{Curvature of locally homogeneous Riemannian spaces} \label{Curvmu} \hfill \par

Given an element $\mu \in \eH^{\rm loc}_m$, the locally homogeneous Riemannian space uniquely associated to $\mu$ as in Theorem \ref{lchomass} is constructed in the following way. Let $\fG_{\mu}$ be the only simply connected Lie group with $\Lie(\fG_{\mu})=\gg_{\mu}$ and $\fH_{\mu} \subset \fG_{\mu}$ the connected Lie subgroup with $\Lie(\fH_{\mu})=\gh_{\mu}$, which is closed in $\fG_{\mu}$ if and only if $\mu$ is regular. Then one can consider the {\it local quotient} of Lie groups ${\fG}_{\mu}/{\fH}_{\mu}$, which admits a unique suitable real analytic manifold structure (see e.g. \cite[Sec 6]{Ped2}). Moreover, by means of the standard local action of ${\fG}_{\mu}$ on ${\fG}_{\mu}/{\fH}_{\mu}$, one can construct a uniquely determined invariant Riemannian metric $g_{\mu}$ on ${\fG}_{\mu}/{\fH}_{\mu}$ such that $(\bR^m,\la\,,\ra_{\st}) \simeq (T_{e_{\mu}{\fH}_{\mu}}{\fG}_{\mu}/{\fH}_{\mu},g_{\mu}|_{e_{\mu}{\fH}_{\mu}})$. \smallskip

We indicate by $\n^{\mu}$ the Levi-Civita connection of $(\fG_{\mu}/\fH_{\mu}, g_{\mu})$, by $D^{\mu}$ its canonical Ambrose-Singer connection, by $\mathfrak{kill}(\mu)$ its Nomizu algebra, by $\Rm(\mu)$ (resp. $\sec(\mu)$) its Riemannian curvature operator (resp. its sectional curvature) and for any integer $k\geq0$ we set \beq \Rm^k(\mu) : {\textstyle \bigotimes^k}\bR^m \otimes \L^2\bR^m \rar \so(m) \,\, , \quad \Rm^k(\mu)(X_1,{\dots},X_k|Y_1{\wedge}Y_2) \= \big((\n^{\mu})^k_{X_1,{\dots},X_k}\Rm(\mu)\big)(Y_1{\wedge}Y_2) \,\, . \label{defRmk} \eeq Moreover, we consider the orthogonal decomposition \beq \mu = (\mu|_{\gh_{\mu} \wedge \gg_{\mu}})+\mu_{\gh_{\mu}}+\mu_{\bR^m} \,\, , \quad \text{ where } \quad \mu_{\gh_{\mu}}: \bR^m \wedge \bR^m \rar \gh_{\mu} \,\, , \quad \mu_{\bR^m}: \bR^m \wedge \bR^m \rar \bR^m \,\, , \eeq with respect to the the $\ad(\gh_{\mu})$-invariant product $\la\,,\ra'_{\mu}$ on $\gg_{\mu}$ introduced in Remark \ref{RemSp}. \smallskip

Let us consider now the $(1,2)$-tensor $S^{\mu}\= D^{\mu}-\n^{\mu}$, which can be identified with the linear map $S^{\mu}: \bR^m \rar \so(m)$ defined by (see \cite[Thm 3.3, Ch X]{KN2}) \beq -2\la S^{\mu}(X)Y,Z\ra_{\st} = \la\mu_{\bR^m}(X,Y),Z\ra_{\st}+\la\mu_{\bR^m}(Z,X),Y\ra_{\st}+\la\mu_{\bR^m}(Z,Y),X\ra_{\st} \,\, . \label{S1} \eeq Then, by \cite[Thm 2.3, Ch X]{KN2} it follows that \beq \Rm(\mu)(X\wedge Y) = \ad_{\mu}\big(\mu_{\gh_{\mu}}(X,Y)\big)-[S^{\mu}(X),S^{\mu}(Y)]-S^{\mu}(\mu_{\bR^m}(X,Y)) \,\, . \label{Rm0} \eeq Moreover, from \eqref{defRmk} and the fact that $D^{\mu} \Rm^k(\mu)=0$ we get \beq X\lrcorner\Rm^{k+1}(\mu) = -S^{\mu}(X) \cdot \Rm^k(\mu) \quad \text{ for any $k\geq0$ } \,\, . \label{SRm} \eeq Notice that in \eqref{SRm}, for any $A \in \so(m)$ and for any linear map $P: {\otimes}^r \bR^m \rar \so(m)$, the linear map $(A \cdot P): {\otimes}^r \bR^m \rar \so(m)$ is defined by $$(A \cdot P)(V_1,{\dots},V_r) \= [A,P(V_1,{\dots},V_r)] - \sum_{i=1}^rP\big(V_1,{\dots},A \cdot V_i,{\dots},V_r\big) \,\, . $$

Finally, for the sake of notation we denote by \beq \eH^{\rm loc}_{q,m}(1) \= \{ \mu \in \eH^{\rm loc}_{q,m} : |\sec(\mu)|\leq1 \} \,\, , \quad \eH_{q,m}(1)\= \eH^{\rm loc}_{q,m}(1) \cap \eH_{q,m} \label{Hlock} \eeq and we define the spaces $\eH^{\rm loc}_m(1)$ and $\eH_m(1)$ in the same fashion as \eqref{unionq}.

\begin{remark} We stress that for any $\mu \in \eH^{\rm loc}_{q,m}$ there exists $R>0$ such that $R\cdot \mu \in \eH^{\rm loc}_{q,m}(1)$, where the bracket $R\cdot \mu$ is defined by \beq (R\cdot \mu)|_{\gh_{\mu} \wedge \gg_{\mu}} \= \mu|_{\gh_{\mu} \wedge \gg_{\mu}} \,\, , \quad (R\cdot \mu)_{\gh_{\mu}}\=\tfrac1{R^2}\mu_{\gh_{\mu}} \,\, , \quad (R\cdot \mu)_{\bR^m}\=\tfrac1R\mu_{\bR^m} \,\, . \label{scaling} \eeq In fact, the space $(\fG_{R\cdot\mu}/\fH_{R\cdot\mu}, g_{R\cdot\mu})$ is locally equivariantly isometric to $(\fG_{\mu}/\fH_{\mu}, R^2g_{\mu})$ and hence it follows that $\sec(R\cdot\mu)=\tfrac1{R^2}\sec(\mu)$. \label{remscaling} \end{remark}

\subsection{Convergence of locally homogeneous spaces} \label{convlhRs} \hfill \par

We are going to describe two types of convergence in the moduli space $\eH^{\rm loc}_m$. We begin with the algebraic convergence, that is

\begin{definition} A sequence $(\mu^{(n)}) \subset \eH^{\rm loc}_{q,m}$ is said to {\it converge algebraically to $\mu^{(\infty)} \in \eH^{\rm loc}_m$} if one of the two mutually exclusive conditions below is satisfied: \begin{itemize}
\item[i)] $\mu^{(\infty)} \in \eH^{\rm loc}_{q,m}$ and $\mu^{(n)} \rar \mu^{(\infty)}$ in the standard topology induced by $\cV_{q,m}$;
\item[ii)] $\mu^{(\infty)} \in \eH^{\rm loc}_{q',m}$ for some $0\leq q' <q$ and there exists $\tilde{\mu} \in \cV_{q,m} \setminus \eH^{\rm loc}_{q,m}$ such that $\mu^{(n)} \rar \tilde{\mu}^{(\infty)}$ in the standard topology of $\cV_{q,m}$ and $(\tilde{\mu}^{(\infty)})_{|q',m}=\mu^{(\infty)}$ as in Remark \ref{remrestr}.
\end{itemize} \label{algconv} \end{definition}

We introduce also a second notion of convergence. Firstly, let us notice that Theorem \ref{NTcurvmod} and Theorem \ref{lchomass} allow us to consider the map $$\eH^{\rm loc}_m \rar \cR^s(m) \,\, , \quad \mu \mapsto \rho^s(\mu)=\big[\Rm^0(\mu),\Rm^1(\mu),{\dots},\Rm^s(\mu)\big]$$ which associates to any $\mu \in \eH^{\rm loc}_m$ the Riemannian $s$-tuple $\rho^s(\mu)$ of $(\fG_{\mu}/\fH_{\mu},g_{\mu})$, for any $s \geq \imath(m)+2$ (see Subsection \ref{Riemtup}). This map is surjective but not injective. In particular, by Theorem \ref{NTcurvmod} and Theorem \ref{lchomass}, it holds that $\rho^s(\mu_1)=\rho^s(\mu_2)$ for some, and hence for any, $s \geq \imath(m)+2$ if and only if $\mathfrak{kill}(\mu_1)=\mathfrak{kill}(\mu_2)$.

\begin{definition} Let $m,s \in \bN$ with $s \geq \imath(m){+}2$. A sequence $(\mu^{(n)}) \subset \eH^{\rm loc}_m$ is said to {\it converge $s$-infinitesimally to $\mu^{(\infty)} \in \eH^{\rm loc}_m$} if $\rho^s(\mu^{(n)}) \rar \rho^s(\mu^{(\infty)})$ as $n \rar +\infty$ in the standard topology of $\cR^s(m)$. If $(\mu^{(n)})$ converges $s$-infinitesimally to $\mu^{(\infty)}$ for any $s \geq \imath(m)+2$, then $(\mu^{(n)})$ is said to {\it converge infinitesimally to $\mu^{(\infty)}$}. \label{infconv} \end{definition}

Notice that by the previous observation, if a sequence $(\mu^{(n)}) \subset \eH^{\rm loc}_m$ converges $s_1$-infinitesimally to $\mu^{(\infty)}_1$ and $s_2$-infinitesimally to $\mu^{(\infty)}_2$ for some integers $s_2 \geq s_1 \geq \imath(m)+2$, then $\mathfrak{kill}(\mu^{(\infty)}_1)=\mathfrak{kill}(\mu^{(\infty)}_2)$. \smallskip

As we have already pointed out in the Introduction, Lauret introduced in \cite[Sec 6]{Lau1} a notion of infinitesimal convergence for globally homogeneous spaces which is equivalent to the infinitesimal convergence according to Definition \ref{infconv}. For the reader's convenience, we will prove this fact in Appendix \ref{appendixA}. \smallskip

The following proposition puts in relation the algebraic convergence with the infinitesimal convergence (see also \cite[Thm 6.12 (i)]{Lau1} and the errata corrige \cite[Thm 3.9]{Lau4}).

\begin{prop} Let $q,m \in \bZ$ with $m \geq 1$ and $0\leq q \leq \frac{m(m-1)}2$. \begin{itemize}
\item[i)] If $(\mu^{(n)}) \subset \eH^{\rm loc}_{q,m}$ converges algebraically to $\mu^{(\infty)} \in \eH^{\rm loc}_m$, then it converges infinitesimally to $\mu^{(\infty)}$.
\item[ii)] The converse assertion of $(i)$ is not true.
\end{itemize} \label{thmLau} \end{prop}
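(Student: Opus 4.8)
The plan is to deduce part (i) from the observation that the whole curvature array $\big(\Rm^0(\mu),\Rm^1(\mu),{\dots},\Rm^s(\mu)\big)$ is the value at $\mu$ of a single \emph{polynomial} (hence continuous) map $\Psi^s$, defined on the closed locus of $\cV_{q,m}$ cut out by conditions (h1) and (h2). Indeed, with respect to the fixed decomposition $\bR^{q+m}=\bR^q\oplus\bR^m$, Remark \ref{RemSp} guarantees that $\gh_\mu=\bR^q$ and $\bR^m$ are $\la\,,\ra'_\mu$-orthogonal, so the components $\mu_{\gh_\mu}=\mathrm{pr}_{\bR^q}\circ\mu$ and $\mu_{\bR^m}=\mathrm{pr}_{\bR^m}\circ\mu$ entering \eqref{Rm0} are just the \emph{fixed} linear projections of $\mu$, and likewise $\ad_\mu$ restricted to $\bR^q\otimes\bR^m\rar\bR^m$ only reads off entries of $\mu$. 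Consequently $S^\mu$ is linear in $\mu$ by \eqref{S1}, $\Rm^0(\mu)=\Rm(\mu)$ is polynomial in $\mu$ by \eqref{Rm0}, and the recursion $X\lrcorner\Rm^{k+1}(\mu)=-S^\mu(X)\cdot\Rm^k(\mu)$ of \eqref{SRm} shows inductively that every $\Rm^k(\mu)$ is polynomial in the entries of $\mu$. Composing $\Psi^s$ with the quotient projection $\wt\cR^s(m)\rar\cR^s(m)=\fO(m)\backslash\wt\cR^s(m)$ exhibits $\mu\mapsto\rho^s(\mu)$ as a continuous map on $\eH^{\rm loc}_{q,m}$.

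With this in hand, part (i) splits according to Definition \ref{algconv}. In case (i) of that definition one has $\mu^{(n)}\rar\mu^{(\infty)}$ in $\cV_{q,m}$, so continuity of $\rho^s$ gives $\rho^s(\mu^{(n)})\rar\rho^s(\mu^{(\infty)})$ for every $s\geq\imath(m)+2$, which is exactly infinitesimal convergence. In case (ii) one has $\mu^{(n)}\rar\tilde\mu$ with $\tilde\mu\in\cV_{q,m}\setminus\eH^{\rm loc}_{q,m}$ satisfying (h1),(h2), and $(\tilde\mu)_{|q',m}=\mu^{(\infty)}$. Here continuity gives $\Psi^s(\mu^{(n)})\rar\Psi^s(\tilde\mu)$ in $W^s(m)$, so it only remains to verify the algebraic identity $\Psi^s(\tilde\mu)=\Psi^s(\mu^{(\infty)})$, after which projecting to $\cR^s(m)$ finishes the argument. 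By construction $\bR^{q-q'}=\{Z\in\bR^q:\tilde\mu(Z,\bR^m)=\{0\}\}$ acts trivially on $\bR^m$; hence in \eqref{S1} and \eqref{Rm0} the term $\mu_{\bR^m}$ is unaffected by the restriction, and in the term $\ad_{\tilde\mu}\big(\tilde\mu_{\gh}(X,Y)\big)|_{\bR^m}$ only the $\bR^{q'}$-component of $\tilde\mu_{\gh}(X,Y)$ contributes, which is precisely the isotropy component of $(\tilde\mu)_{|q',m}$. Thus $\Rm^0(\tilde\mu)=\Rm^0(\mu^{(\infty)})$, and by \eqref{SRm}, which involves only $S^\mu$ and the lower $\Rm^k$, the same equality propagates to all higher derivatives, giving $\Psi^s(\tilde\mu)=\Psi^s(\mu^{(\infty)})$.

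For part (ii) it suffices to exhibit one sequence converging infinitesimally but not algebraically, reflecting that $\mu\mapsto\rho^s(\mu)$ is not injective. Take $m=2$ and let $\nu\in\eH_{1,2}$ be the orthogonal transitive Lie algebra of the flat plane presented with $\gg_\nu=\so(2)\ltimes\bR^2$ and isotropy $\gh_\nu=\so(2)$; since $\so(2)$ contains no nonzero ideal of the Euclidean algebra $\gg_\nu$ and is compact, indeed $\nu\in\eH_{1,2}$. Let $\mu^{(\infty)}\in\eH_{0,2}$ be the abelian bracket of flat $\bR^2$ with trivial isotropy. Both underlying spaces are flat, so $\rho^s(\nu)=\rho^s(\mu^{(\infty)})=0$ for every $s\geq\imath(2)+2$; hence the constant sequence $\mu^{(n)}\=\nu$ converges infinitesimally to $\mu^{(\infty)}$. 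It does not, however, converge algebraically to $\mu^{(\infty)}$: case (i) of Definition \ref{algconv} fails since $\mu^{(\infty)}\in\eH^{\rm loc}_{0,2}$ but $\nu\in\eH^{\rm loc}_{1,2}$, and case (ii) fails since the only limit of a constant sequence is $\tilde\mu=\nu$, which lies in $\eH^{\rm loc}_{1,2}$ and not in $\cV_{1,2}\setminus\eH^{\rm loc}_{1,2}$.

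The routine parts are the explicit polynomiality of $\Psi^s$ and the verification that the flat presentation with nontrivial isotropy yields an element of $\eH_{1,2}$. The main point requiring care is the identity $\Psi^s(\tilde\mu)=\Psi^s(\mu^{(\infty)})$ in case (ii): one must argue that passing to the degenerate limit $\tilde\mu$ and then restricting alters none of the curvature operators, i.e. that the directions on which $\tilde\mu$ degenerates act trivially on $\bR^m$ and therefore drop out of \eqref{S1}, \eqref{Rm0} and \eqref{SRm}. This is the step where the precise form of the restriction \eqref{restr} and the orthogonality built into Remark \ref{RemSp} are genuinely used.
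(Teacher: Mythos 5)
Your part (i) is correct and is essentially the paper's own argument: \eqref{S1} makes $\mu\mapsto S^{\mu}$ continuous, and \eqref{Rm0}, \eqref{SRm} then express every $\Rm^k(\mu)$ polynomially in the entries of $\mu$; your additional verification that these formulas are insensitive to the restriction \eqref{restr} in case (ii) of Definition \ref{algconv} correctly fills in a step the paper leaves implicit.

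Part (ii), however, contains a genuine gap: your flat-plane sequence is not a counterexample, because the constant sequence $\mu^{(n)}=\nu$ \emph{does} converge algebraically to $\mu^{(\infty)}$ in the sense of Definition \ref{algconv}. You treat $\nu$ as a point whose only limit is itself, but elements of $\eH^{\rm loc}_{1,2}$ are $\big(\fGL(1)\times\fO(2)\big)$-orbits and $\cV_{1,2}$ carries the quotient topology, which is \emph{not} Hausdorff, so a constant sequence can converge to points other than itself. Explicitly, writing $\nu(e_0,e_1)=e_2$, $\nu(e_0,e_2)=-e_1$, $\nu(e_1,e_2)=0$ and acting by $g_t=\operatorname{diag}(t,1,1)\in\fGL(1)\times\fO(2)$, one gets $(g_t\cdot\nu)(e_0,e_1)=t^{-1}e_2$, $(g_t\cdot\nu)(e_0,e_2)=-t^{-1}e_1$, $(g_t\cdot\nu)(e_1,e_2)=0$, hence $g_t\cdot\nu\rar\tilde{\mu}=0$ as $t\rar+\infty$. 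The zero bracket satisfies (h1) and (h2) but violates (h3), so $\tilde{\mu}\in\cV_{1,2}\setminus\eH^{\rm loc}_{1,2}$, with $q'=0$ and $(\tilde{\mu})_{|0,2}=\mu^{(\infty)}$; and since every saturated open neighborhood of $0$ meets the orbit of $\nu$ and therefore contains all of it, the constant sequence converges to $\tilde{\mu}$ in $\cV_{1,2}$. Thus all requirements of Definition \ref{algconv}(ii) are met and $(\nu)$ converges algebraically to $\mu^{(\infty)}$. This is not an accident: Remark \ref{remrestr} and case (ii) of the definition were designed precisely so that a presentation with redundant isotropy directions converges (even as a constant sequence) to the presentation with smaller isotropy. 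Consequently the non-injectivity of $\rho^s$ alone can never produce a counterexample to the converse of (i).

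To repair part (ii) you need a sequence with no algebraic limit at all. The paper disposes of this by citing \cite[Ex 9.1]{BL} (algebraic collapse). A self-contained fix using the paper's own computations: take the Berger spheres $\mu^{(n)}=\mu_{\star}(\e^{(n)},0)\in\eH_{0,3}$ with $\e^{(n)}\rar0$, as in \eqref{star}. Since $\l^{(n)}_1=\l^{(n)}_2=1$, the regularity index is infinite, so Proposition \ref{propsec} and \eqref{crucialestimate} give $\Rm^0(\mu^{(n)})\rar\Rm^0(\mu_{\zero})$ and $\Rm^k(\mu^{(n)})\rar0$ for every $k\geq1$; by \eqref{RMmuzero} this is infinitesimal convergence to $\mu_{\zero}\in\eH_{1,3}$. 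But this sequence cannot converge algebraically to $\mu_{\zero}$: case (i) of Definition \ref{algconv} fails because $\mu_{\zero}\notin\eH^{\rm loc}_{0,3}$, and case (ii) is vacuous for $q=0$ since it requires $0\leq q'<0$. Indeed, as $\fO(3)$ is compact and $|\mu^{(n)}|_{\st}\rar+\infty$ by \eqref{muS3}, this sequence admits no algebraic limit whatsoever, which is exactly the robust failure the proposition is meant to record.
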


\begin{proof} Let us assume that $(\mu^{(n)}) \subset \eH^{\rm loc}_{q,m}$ converges algebraically to $\mu \in \eH^{\rm loc}_m$. From \eqref{S1} we immediately get that $S^{\mu^{(n)}} \rar S^{\mu^{(\infty)}}$ in the standard Euclidean topology. Therefore, claim (i) is a direct consequence of \eqref{Rm0} and \eqref{SRm}. On the other hand, claim (ii) follows from \cite[Ex 9.1]{BL}. \end{proof}

Notice that there exist sequences $(\mu^{(n)})$ which admit limits in the infinitesimal convergence, while $|\mu^{(n)}|_{\st}\rar+\infty$. This happens e.g. in considering Ricci flow blow-downs on the universal cover of $\fSL(2,\bR)$ (see \cite{Lo}). This phenomenon, called {\it algebraic collapse}, it has several geometric consequences and it has been investigated in \cite{BL} (see also \cite[Sec 5]{Ped1}).

\subsection{Proof of Theorem \ref{MAIN-A}} \hfill \par

We begin this section by recalling the following

\begin{definition} A {\it geometric model} is a smooth locally homogeneous Riemannian distance ball $(\eB,\hat{g})=(\eB_{\hat{g}}(o,\pi), \hat{g})$ of radius $\pi$ satisfying $|\sec(\hat{g})|\leq 1$ and $\inj_{o}(\eB,\hat{g}_{\mu})=\pi$. \label{geommod} \end{definition}


This special class plays an important role in the theory of convergence of locally homogeneous spaces. Indeed, the main results in \cite{Ped3} can be stated as follows.

\begin{theorem} The following two statements hold true. \begin{itemize}
\item[a)] For each $\mu \in \eH^{\rm loc}_m(1)$, there exists a geometric model $(\eB_{\mu},\hat{g}_{\mu})=(\eB_{\hat{g}_{\mu}}(o_{\mu},\pi), \hat{g}_{\mu})$ which is equivariantly locally isometric to $(\fG_{\mu}/\fH_{\mu}, g_{\mu})$, and it is unique up to global equivariant isometry.
\item[b)] Any sequence of geometric models with bounded geometry up to order $k$ admits a subsequence which converges to a limit geometric model in the pointed $\cC^{k+1,\a}$-topology, for any $0<\a<1$.
\end{itemize} \label{great} \end{theorem}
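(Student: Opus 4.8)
The plan is to establish the two assertions by separate means: part (a) through an explicit exponential-chart construction at the origin, and part (b) as an instance of the Cheeger--Gromov compactness machinery specialized to the rigid radius-$\pi$ normalization.

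For part (a), I would start from the real-analytic metric germ of $(\fG_{\mu}/\fH_{\mu},g_{\mu})$ at the origin $o_{\mu}\=e_{\mu}\fH_{\mu}$. The first point to settle is that, even when $\mu$ is non-regular and the local quotient fails to be Hausdorff, geodesics issuing from $o_{\mu}$ are defined up to parameter length $\pi$. Here local homogeneity is decisive: since the pseudogroup of local isometries acts transitively and preserves the uniformly bounded curvature data, there is a fixed $\e>0$ by which every geodesic can be prolonged from any of its points, and iterating this yields geodesics defined on $[0,\pi]$. With geodesics in hand I would consider $\exp_{o_{\mu}}\colon B(0,\pi)\subset\bR^m\cong T_{o_{\mu}}(\fG_{\mu}/\fH_{\mu})\rar\fG_{\mu}/\fH_{\mu}$; because $|\sec(g_{\mu})|\leq1$, the Rauch comparison theorem forbids conjugate points before distance $\pi$, so $\exp_{o_{\mu}}$ is an immersion on $B(0,\pi)$. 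I then set $\eB_{\mu}\=B(0,\pi)$ and $\hat{g}_{\mu}\=\exp_{o_{\mu}}^*g_{\mu}$. As $\exp_{o_{\mu}}$ is a local isometry, $(\eB_{\mu},\hat{g}_{\mu})$ is locally homogeneous with $|\sec(\hat{g}_{\mu})|\leq1$; since its radial lines are unit-speed minimizing geodesics, $\inj_{o_{\mu}}(\eB_{\mu},\hat{g}_{\mu})=\pi$, so it is a genuine geometric model.

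For uniqueness I would exploit the rigidity carried by the Nomizu algebra. Any geometric model equivariantly locally isometric to $(\fG_{\mu}/\fH_{\mu},g_{\mu})$ has, at its distinguished center, the same curvature $s$-tuple, hence the same germ. Given two such models, I fix the linear isometry of their tangent spaces at the centers matching the adapted frames; because each model is exactly the exponential chart of radius $\pi$ of one and the same germ, this linear datum integrates to a global equivariant isometry between the balls, with the absence of conjugate points below $\pi$ ensuring it is defined on all of $\eB_{\mu}$.

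For part (b), the argument is the Cheeger--Gromov(--Hamilton) compactness theorem. A sequence of geometric models with bounded geometry up to order $k$ satisfies $|\sec|\leq1$, uniform bounds $|(\n)^j\Rm|\leq C$ for $0\leq j\leq k$, and the uniform lower injectivity-radius bound $\inj_{o}\geq\pi$ at the centers. Passing to harmonic coordinates on balls of uniform radius converts these into uniform $\cC^{k+1,\a}$ bounds on the metric tensors, and Arzel\`a--Ascoli extracts a subsequence converging in the pointed $\cC^{k+1,\a'}$-topology, for every $\a'<\a$, to a limit pointed Riemannian ball. It then remains to verify that the limit is again a geometric model: local homogeneity survives because it is detected by the curvature $s$-tuple $\rho^s$ of Theorem \ref{NTcurvmod}, which converges along the subsequence; the bound $|\sec|\leq1$ is closed under the (here at least $\cC^2$) convergence; and the injectivity radius at the center, being continuous under pointed $\cC^{1,\a}$-convergence, equals $\pi$ in the limit.

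The main obstacle sits in the non-regular case of part (a): showing that the exponential chart manufactures a bona fide Hausdorff Riemannian ball out of the pathological local quotient, together with the uniform extendability of geodesics to length exactly $\pi$. In part (b) the analogous delicate point is the closedness of the class of geometric models under limits, namely ruling out both collapse and any drop of the injectivity radius below $\pi$; this is precisely the rigidity that the radius-$\pi$ normalization was engineered to enforce.
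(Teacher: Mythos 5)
Note first that the paper does not actually prove Theorem \ref{great}: it is stated as a reformulation of the main results of \cite{Ped3} and used as a black box, so there is no internal proof to compare yours against; your proposal has to be judged on its own, and it leaves genuine gaps exactly where the difficulty lies. In part (a), the claim that local homogeneity provides a uniform $\e>0$ by which every geodesic can be prolonged \emph{within} $\fG_{\mu}/\fH_{\mu}$, so that iteration yields geodesics of length $\pi$, is false as stated: the punctured Euclidean plane (or a small metric ball in any homogeneous space) is locally homogeneous, yet its geodesics cannot be extended inside it. What local homogeneity actually provides is the possibility of continuing the metric \emph{germ} along a geodesic in an abstractly constructed extension; the real content of the theorem is that these continuations can be performed consistently in all radial directions, producing a smooth, Hausdorff Riemannian ball on $B(0,\pi)$ --- including in the non-regular case where $\fH_{\mu}$ is not closed --- on which your subsequent steps (Rauch, Gauss lemma, $\inj_{o_{\mu}}=\pi$, uniqueness by analytic continuation) can then run. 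You explicitly flag this as ``the main obstacle'' and then assume it, so part (a) is not proved but rather restated.

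In part (b) there are two further gaps. First, Cheeger--Gromov compactness for these incomplete balls requires uniform lower injectivity-radius bounds on the interior compact subsets $\eB_{\hat{g}_{\mu^{(n)}}}(o_{\mu^{(n)}},\pi-\d)$, not only at the centers: a point at distance $r$ from $o_{\mu^{(n)}}$ has injectivity radius at most $\pi-r$, and a uniform positive bound there must be propagated from the center (e.g. by Cheeger--Gromov--Taylor volume arguments) before harmonic coordinates of uniform size exist; you assert this implicitly. Second, your verification that the limit is again a geometric model breaks down for small $k$: pointed $\cC^{k+1,\a}$ convergence of the metrics controls only $k-1$ derivatives of the curvature, so when $k-1<\imath(m)+2$ the tuple $\rho^s$ of Theorem \ref{NTcurvmod} is not continuous under this convergence and cannot be used to detect local homogeneity of the limit; nor is it clear a priori that the limit metric, which is only $\cC^{k+1,\a}$, is smooth. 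Finally, injectivity radius is not ``continuous under pointed $\cC^{1,\a}$-convergence'' as a general principle: the inequality $\inj_{o_{\mu^{(\infty)}}}\geq\pi$ needs an argument, e.g. excluding short geodesic loops at the center by pulling them back through the embeddings $\phi^{(n)}$. These closure properties are precisely what the radius-$\pi$ normalization was engineered to make provable in \cite{Ped3}, and they require proofs rather than appeals to standard continuity.
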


Claim (a) assures that the geometric models provide a parametrization for $\eH^{\rm loc}_m(1)$ defined in \eqref{Hlock}. In claim (b), a sequence of geometric models $(\eB_{\mu^{(n)}},\hat{g}_{\mu^{(n)}})$ is said to have {\it bounded geometry up to order $k$}, for some integer $k\geq0$, if there exists $C>0$ such that $$\big|\Rm^0(\mu^{(n)})\big|_{\st}+\big|\Rm^1(\mu^{(n)})\big|_{\st}+{\dots}+\big|\Rm^k(\mu^{(n)})\big|_{\st} \leq C \quad \text{ for any $n \in \bN$ .} $$ Moreover, we also recall that a sequence of geometric models $(\eB_{\mu^{(n)}},\hat{g}_{\mu^{(n)}})$ converges in the pointed $\cC^{k+1,\a}$-topology to a geometric model $(\eB_{\mu^{(\infty)}},\hat{g}_{\mu^{(\infty)}})$ if for any $0 < \d < \pi$ there exist an integer $\bar{n}=\bar{n}(\d) \in \bN$ and $\cC^{k+2,\a}$-embeddings $\phi^{(n)}: \eB_{\hat{g}_{\mu^{(\infty)}}}(o_{\mu^{(\infty)}},\pi-\d) \subset \eB_{\mu^{(\infty)}} \rar \eB_{\mu^{(n)}}$ for any $n \geq \bar{n}$ such that $\phi^{(n)}(o_{\mu^{(\infty)}})=o_{\mu^{(n)}}$ and the pulled back metrics $(\phi^{(n)})^*\hat{g}_{\mu^{(n)}}$ converge to $\hat{g}_{\mu^{(\infty)}}$ in $\cC^{k+1,\a}$-topology.

\begin{proof}[Proof of Theorem \ref{MAIN-A}] Let us consider a sequence $(\mu^{(n)}) \subset \eH^{\rm loc}_m(1)$ and assume that $(\mu^{(n)})$ converges $(s{+}1)$-infinitesimally to an element $\mu^{(\infty)} \in \eH^{\rm loc}_m(1)$ for some $s \geq \imath(m){+}2$. By claim (a) in Theorem \ref{great} and the very definition of $(s{+}1)$-infinitesimal convergence, there exists a sequence of geometric models $(\eB_{\mu^{(n)}},\hat{g}_{\mu^{(n)}})$ uniquely associated to $(\mu^{(n)})$ and it has bounded geometry up to order $s+1$. Then, by claim (b) in Theorem \ref{great}, there exists a subsequence of $(\eB_{\mu^{(n)}},\hat{g}_{\mu^{(n)}})$ which converges to a limit geometric model in the pointed $\cC^{s+2,\a}$-topology. Moreover, it holds more: since $(\mu^{(n)})$ converges $(s{+}1)$-infinitesimally to $\mu^{(\infty)}$, by Theorem \ref{NTcurvmod} any convergent subsequence of $(\eB_{\mu^{(n)}},\hat{g}_{\mu^{(n)}})$ in the pointed $\cC^{s+2,\a}$-topology necessarily converges to $(\eB_{\mu^{(\infty)}},\hat{g}_{\mu^{(\infty)}})$. Let us assume then by contradiction that $(\eB_{\mu^{(n)}},\hat{g}_{\mu^{(n)}})$ does not converges to $(\eB_{\mu^{(\infty)}},\hat{g}_{\mu^{(\infty)}})$ in the pointed $\cC^{s+2,\a}$-topology. Then, there exist $0 <\d < \pi$ and a sequence $(n_j) \subset \bN$ such that for any $j_{\zero} \in \bN$ and for any choice of $\cC^{s+3,\a}$-embeddings $\phi^{(n_j)}: \eB_{\hat{g}_{\mu^{(\infty)}}}(o_{\mu^{(\infty)}},\pi-\d) \subset \eB_{\mu^{(\infty)}} \rar \eB_{\mu^{(n_j)}}$ with $\phi^{(n)}(o_{\mu^{(\infty)}})=o_{\mu^{(n_j)}}$ and $j \geq j_{\zero}$, the pulled back metrics $(\phi^{(n_j)})^*\hat{g}_{\mu^{(n_j)}}$ do not converge to $\hat{g}_{\mu^{(\infty)}}$ in $\cC^{s+2,\a}$-topology. On the other hand, the sequence $(\eB_{\mu^{(n_j)}},\hat{g}_{\mu^{(n_j)}})$ has bounded geometry up to order $s{+}1$, and hence it admits a subsequence which converges to a limit geometric model in the pointed $\cC^{s+2,\a}$-topology. By construction, this limit is not isometric to $(\eB_{\mu^{(\infty)}},\hat{g}_{\mu^{(\infty)}})$, which is absurd. \end{proof}

We finally remark that Corollary \ref{MAINCOR-B} follows immediately from Theorem \ref{MAIN-A}.

\section{Collapsing sequences on $\fSU(2)$} \label{AB} \setcounter{equation} 0

\subsection{Riemannian curvature of left invariant metrics on $\fSU(2)$} \label{leftinvSU(2)} \hfill \par

Let us fix a left invariant metric $g$ on the Lie group $\fSU(2)$, which we will constantly identify with the corresponding Euclidean inner product on the Lie algebra $\su(2)$. By the Milnor Theorem, it is known that there exists a $g$-orthogonal basis $(X_0,X_1,X_2)$ for $\su(2)$ such that \beq [X_0,X_1]=-2X_2 \,\, , \quad [X_0,X_2]=+2X_1 \,\, , \quad [X_1,X_2]=-2X_0 \,\, . \label{leftinv} \eeq Up to an automorphism of $\su(2)$, we can suppose that \beq X_0=\left(\!\!\begin{array}{cc} i \!&\! 0 \\ 0 \!&\! -i \end{array}\!\!\right) \,\, , \quad X_1=\left(\!\!\begin{array}{cc} 0 \!&\! -1 \\ 1 \!&\! 0 \end{array}\!\!\right) \,\, , \quad X_2=\left(\!\!\begin{array}{cc} 0 \!&\! i \\ i \!&\! 0 \end{array}\!\!\right) \,\, . \label{stb} \eeq We also set \beq g(X_0,X_0)=\e \,\, , \quad g(X_1,X_1)=\l_1 \,\, , \quad g(X_2,X_2)=\l_2 \,\, . \label{coeffS3} \eeq We aim to study the curvature of $(\fSU(2),g)$. Firstly, by means of \eqref{leftinv} and \eqref{coeffS3} we notice that the bracket $\mu \in \eH_{0,3}$ satisfying $(\fG_{\mu},g_{\mu})=(\fSU(2),g)$ is given by \beq \mu(e_0,e_1)=-2\sqrt{\tfrac{\l_2}{\e\l_1}}\,e_2 \,\, , \quad \mu(e_0,e_2)=+2\sqrt{\tfrac{\l_1}{\e\l_2}}\,e_1 \,\, , \quad \mu(e_1,e_2)=-2\sqrt{\tfrac{\e}{\l_1\l_2}}\,e_0 \,\, . \label{muS3} \eeq Here, we indicated with $(e_0,e_1,e_2)$ the standard basis of $\bR^3$. From now on we will denote by $$\big(E_{ij}{\=}\!e^i\otimes e_j-e^j\otimes e_i \, , \,\, 0 \leq i < j \leq 2\big)$$ the standard basis of $\gs\go(3)$. \smallskip

Let $S^{\mu}: \bR^3 \rar \gs\go(3)$ be the linear operator defined in \eqref{S1}. Then, one can directly check that \beq S^{\mu}(e_0)=+c^{\mu}_0E_{12} \,\, , \quad S^{\mu}(e_1)=-c^{\mu}_1E_{02} \,\, , \quad S^{\mu}(e_2)=+c^{\mu}_2E_{01} \,\, , \label{covder} \eeq where $c^{\mu}_0, c^{\mu}_1, c^{\mu}_2 \in \bR$ are the coefficients defined by \beq c^{\mu}_0 \= {\textstyle\frac{-\e+\l_1+\l_2}{\sqrt{\e\l_1\l_2}}} \,\, , \quad c^{\mu}_1 \= {\textstyle\frac{+\e-\l_1+\l_2}{\sqrt{\e\l_1\l_2}}} \,\, , \quad c^{\mu}_2 \= {\textstyle\frac{+\e+\l_1-\l_2}{\sqrt{\e\l_1\l_2}}} \label{c} \,\, . \eeq In virtue of \eqref{Rm0} and \eqref{covder}, the curvature operator $\Rm^0(\mu): \L^2\bR^3 \rar \gs\go(3)$ is diagonal, i.e. $$\begin{gathered} \Rm^0(\mu)(e_0\wedge e_1)=\sec(\mu)(e_0{\wedge}e_1)E_{01} \,\, , \quad \Rm^0(\mu)(e_1\wedge e_2)=\sec(\mu)(e_1{\wedge}e_2)E_{12} \,\, , \\ \Rm^0(\mu)(e_0\wedge e_2)=\sec(\mu)(e_0{\wedge}e_2)E_{02} \,\, , \end{gathered} $$ and the sectional curvature is given by \begin{align}
&\sec(\mu)(e_0{\wedge}e_1)= -c^{\mu}_0c^{\mu}_1+c^{\mu}_1c^{\mu}_2+c^{\mu}_0c^{\mu}_2 = {\textstyle\frac1{\l_1\l_2}}\big(\e+2(\l_2-\l_1)-\e^{-1}(\l_2-\l_1)(\l_1+3\l_2)\big) \,\, ,\nonumber \\
&\sec(\mu)(e_1{\wedge}e_2)= +c^{\mu}_0c^{\mu}_1-c^{\mu}_1c^{\mu}_2+c^{\mu}_0c^{\mu}_2 = {\textstyle\frac1{\l_1\l_2}}\big({-}3\e+2(\l_1+\l_2)+\e^{-1}(\l_2-\l_1)^2\big) \,\, , \label{sec} \\
&\sec(\mu)(e_0{\wedge}e_2)= +c^{\mu}_0c^{\mu}_1+c^{\mu}_1c^{\mu}_2-c^{\mu}_0c^{\mu}_2 = {\textstyle\frac1{\l_1\l_2}}\big(\e-2(\l_2-\l_1)+\e^{-1}(\l_2-\l_1)(\l_2+3\l_1)\big) \,\, .\nonumber 
\end{align}

\noindent By applying \eqref{SRm}, we obtain the following expressions for the non-zero components of $\Rm^1(\mu)$: \beq \begin{array}{ll}
\Rm^1(\mu)(e_0|e_0\wedge e_1)=2(c^{\mu}_0)^2(c^{\mu}_1{-}c^{\mu}_2)E_{02} \,\, , \quad& \Rm^1(\mu)(e_1|e_1\wedge e_2)=2(c^{\mu}_1)^2(c^{\mu}_0{-}c^{\mu}_2)E_{01} \,\, , \\
\Rm^1(\mu)(e_0|e_0\wedge e_2)=2(c^{\mu}_0)^2(c^{\mu}_1{-}c^{\mu}_2)E_{01} \,\, , \quad& \Rm^1(\mu)(e_2|e_0\wedge e_2)=2(c^{\mu}_2)^2(c^{\mu}_0{-}c^{\mu}_1)E_{12} \,\, , \\
\Rm^1(\mu)(e_1|e_0\wedge e_1)=2(c^{\mu}_1)^2(c^{\mu}_0{-}c^{\mu}_2)E_{12} \,\, , \quad& \Rm^1(\mu)(e_2|e_1\wedge e_2)=2(c^{\mu}_2)^2(c^{\mu}_0{-}c^{\mu}_1)E_{02} \,\, .
\end{array} \label{cov1R} \eeq

\noindent We stress also that from \eqref{SRm}, for any integer $k\geq0$ there exists a constant $C_k >0$, which depends only on $k$, such that \beq \big|\Rm^{k+1}(\mu)(w,v_{i_1},{\dots},v_{i_k})\big|_{\st} \leq C_k \big|S^{\mu}(w)\big|_{\st}\big|\Rm^k(\mu)(v_{i_1},{\dots},v_{i_k})\big|_{\st} \label{boundRm} \eeq for any $w,v_{i_1},{\dots},v_{i_k} \in \bR^3$. Finally, we prove the following

\begin{lemma} Let $k \geq 1$ be an integer and $b(k)\=1+(k \,\,{\rm mod}\, 2)$. Then, it holds that \beq \begin{aligned}
\Rm^k(\mu)(e_0,{\dots},e_0|e_0{\wedge}e_1)&=(-1)^{[\frac{k-1}2]}2^k(c^{\mu}_0)^{k+1}(c^{\mu}_1-c^{\mu}_2)E_{0b(k)} \\
\Rm^k(\mu)(e_0,{\dots},e_0|e_0{\wedge}e_2)&=(-1)^{[\frac{k}2]}2^k(c^{\mu}_0)^{k+1}(c^{\mu}_1-c^{\mu}_2)E_{0b(k+1)} \quad , \\
\Rm^k(\mu)(e_0,{\dots},e_0|e_1{\wedge}e_2)&=0 \label{ind123} \end{aligned} \eeq where $c^{\mu}_0,c^{\mu}_1,c^{\mu}_2$ are defined in \eqref{c}. \end{lemma}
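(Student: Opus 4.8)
The plan is to prove \eqref{ind123} by induction on $k$, with the recursion \eqref{SRm} in the single direction $e_0$ as the engine. Writing $P_k$, $Q_k$, $R_k$ for the three evaluations $\Rm^k(\mu)(e_0,{\dots},e_0|e_0{\wedge}e_1)$, $\Rm^k(\mu)(e_0,{\dots},e_0|e_0{\wedge}e_2)$ and $\Rm^k(\mu)(e_0,{\dots},e_0|e_1{\wedge}e_2)$, the point is that setting $X=e_0$ in \eqref{SRm} and invoking $S^{\mu}(e_0)=c^{\mu}_0E_{12}$ from \eqref{covder} expresses the level-$(k{+}1)$ tensors in terms of the level-$k$ ones through the single operation $-c^{\mu}_0(E_{12}\cdot\,)$. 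I would take the base case $k=1$ straight from \eqref{cov1R}, which gives $P_1,Q_1$ as stated and $R_1=0$ (it is absent from the list of nonzero components), in agreement with \eqref{ind123}.

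The heart of the inductive step is to evaluate the derivation $E_{12}\cdot(\,\cdot\,)$, and for this I would isolate three elementary facts: that $E_{12}$ kills $e_0$, so no contribution survives from the covariant slots; that on the $2$-form slot $E_{12}$ sends $e_0{\wedge}e_1\mapsto e_0{\wedge}e_2$, $e_0{\wedge}e_2\mapsto -e_0{\wedge}e_1$ and $e_1{\wedge}e_2\mapsto 0$; and that $[E_{12},E_{01}]=E_{02}$, $[E_{12},E_{02}]=-E_{01}$ in $\so(3)$. Together these give the uniform recursions $P_{k+1}=-c^{\mu}_0([E_{12},P_k]-Q_k)$, $Q_{k+1}=-c^{\mu}_0([E_{12},Q_k]+P_k)$ and $R_{k+1}=-c^{\mu}_0[E_{12},R_k]$. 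The last one, with $R_1=0$, immediately forces $R_k=0$ for all $k\geq1$, settling the third line of \eqref{ind123}.

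For the coupled pair $(P_k,Q_k)$, the cleanest route I foresee is to note that both stay in the plane $\vspan(E_{01},E_{02})$, which $\ad(E_{12})$ preserves, and to package them as $W_k\=P_k+\sqrt{-1}\,Q_k$; the two recursions then collapse into $W_{k+1}=-c^{\mu}_0(\ad(E_{12})+\sqrt{-1})W_k$. Since $\ad(E_{12})$ diagonalizes on this plane with eigenvalues $\pm\sqrt{-1}$ and $W_1$ lands in the $+\sqrt{-1}$-eigenline spanned by $E_{02}+\sqrt{-1}\,E_{01}$, the recursion reduces to the scalar relation $W_{k+1}=-2\sqrt{-1}\,c^{\mu}_0W_k$, which I can solve in closed form as $W_k=2^k(-\sqrt{-1})^{k-1}(c^{\mu}_0)^{k+1}(c^{\mu}_1{-}c^{\mu}_2)(E_{02}+\sqrt{-1}\,E_{01})$.

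The only delicate point---and the step I expect to be the real obstacle---is the sign and index bookkeeping: matching this explicit solution to the floor-function signs $(-1)^{[\frac{k-1}2]}$, $(-1)^{[\frac{k}2]}$ and to the toggling plane index $b(k)$ in \eqref{ind123}. With the complexified formulation this reduces to reading off the real and imaginary parts of the fourth-root-of-unity factor $(-\sqrt{-1})^{k-1}$ case by case according to $k\bmod 4$, which I would carry out to confirm that the four residue classes reproduce exactly the asserted signs and the alternation between $E_{01}$ and $E_{02}$, thereby closing the induction.
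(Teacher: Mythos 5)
Your proof is correct and takes essentially the same route as the paper's: both are inductions on $k$ powered by \eqref{SRm} in the single direction $e_0$, using $S^{\mu}(e_0)=c^{\mu}_0E_{12}$ from \eqref{covder}, the base case $k=1$ from \eqref{cov1R}, and the fact that $\ad(E_{12})$ kills $e_0$ and $e_1\wedge e_2$ while rotating the planes $\vspan(e_0{\wedge}e_1,e_0{\wedge}e_2)$ and $\vspan(E_{01},E_{02})$. The only difference is bookkeeping: where the paper verifies the asserted formula inductively via the identity $[E_{12},E_{0b(k)}]=(-1)^kE_{0b(k+1)}$, you solve the same coupled recursion in closed form by complexifying ($W_k=P_k+\sqrt{-1}\,Q_k$, with $W_1$ in the $+\sqrt{-1}$-eigenline of $\ad(E_{12})$), and your closed form $W_k=2^k(-\sqrt{-1})^{k-1}(c^{\mu}_0)^{k+1}(c^{\mu}_1-c^{\mu}_2)(E_{02}+\sqrt{-1}E_{01})$ does reproduce \eqref{ind123} in all four residue classes of $k$ mod $4$.
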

\begin{proof} We proceed by induction on $k \geq 1$. The case $k=1$ follows directly from the computations in \eqref{cov1R}. In order to prove that $k \Rightarrow k+1$, we first notice that \beq [E_{12},E_{0b(k)}]=(-1)^kE_{0b(k+1)} \,\, , \quad (-1)^{[\frac{k}2]}=(-1)^{k-1}(-1)^{[\frac{k-1}2]} \,\, . \label{randomformulas} \eeq So, by \eqref{covder}, \eqref{SRm}, \eqref{randomformulas} and the inductive hypothesis we get \begin{align*}
\Rm^{k+1}(\mu)&(e_0,{\dots},e_0|e_0{\wedge}e_1) = \\
&= -(-1)^{[\frac{k-1}2]}2^k(c^{\mu}_0)^{k+2}(c^{\mu}_1-c^{\mu}_2)[E_{12},E_{0b(k)}]+(-1)^{[\frac{k-1}2]}2^k(c^{\mu}_0)^{k+2}(c^{\mu}_1-c^{\mu}_2)E_{0b(k+1)} \\
&= (-1)^{k-1}(-1)^{[\frac{k-1}2]}2^{k+1}(c^{\mu}_0)^{k+2}(c^{\mu}_1-c^{\mu}_2)E_{0b(k+1)} \\
&= (-1)^{[\frac{k}2]}2^k(c^{\mu}_0)^{k+1}(c^{\mu}_1-c^{\mu}_2)E_{0b(k+1)} \,\, .\end{align*} The second formula is analogous. For the third formula, from \eqref{covder}, \eqref{SRm} and the inductive hypothesis \begin{align*}
\Rm^{k+1}(\mu)&(e_0,{\dots},e_0|e_1{\wedge}e_2) = \\
&=0+c^{\mu}_0\Rm^k(\mu)(e_0,{\dots},e_0|e_2\wedge e_2)-c^{\mu}_0\Rm^k(\mu)(e_0,{\dots},e_0|e_1\wedge e_1) \\
&= 0 \end{align*} and this completes the proof. \end{proof}

\subsection{Almost-Berger sequences} \hfill \par

We consider now a sequence $(\mu^{(n)}) \subset \eH_{0,3}$ of brackets which corresponds to a sequence $(g_{\mu^{(n)}})$ of left-invariant metrics on $\fSU(2)$. By \eqref{leftinv}, we can assume that \beq \begin{gathered} g_{\mu^{(n)}}(X_0,X_0)=\e^{(n)} \,\, , \quad g_{\mu^{(n)}}(X_1,X_1)=\l_1^{(n)} \,\, , \quad g_{\mu^{(n)}}(X_2,X_2)=\l_2^{(n)} \,\, , \\ g_{\mu^{(n)}}(X_0,X_1)=g_{\mu^{(n)}}(X_0,X_2)=g_{\mu^{(n)}}(X_1,X_2)=0 \label{aB} \end{gathered} \eeq where $(X_0,X_1,X_2)$ is the standard basis of $\su(2)$ given in \eqref{stb}. Let us suppose that $$\e^{(n)} \rar 0 \,\, , \quad \l_i^{(n)} \rar \l_i^{(\infty)} \in (0,+\infty) \quad \text{ as $n \rar +\infty$ } . $$ A direct computation based on \eqref{sec} shows that the sectional curvature $\sec(\mu^{(n)})$ is uniformly bounded if and only if \beq \big|\l_1^{(n)}-\l_2^{(n)}\big| \leq C \, \e^{(n)} \quad \text{ for some $C>0$ } \label{l1-l2} \,\, . \eeq Notice that \eqref{l1-l2} is coherent with \cite[Thm 4.3]{Ped1}, which gives necessary conditions for a sequence of invariant metrics to diverge with bounded curvature. \smallskip

Therefore, since we are interested in studying sequences with bounded curvature, we assume without loss of generality that $\lim_{n\to\infty}\l_i^{(n)} =1$, for $i=1,2$. Notice that, if we define \beq \bar{k} \= \sup\Big\{k \in \bZ \, : \big(\e^{(n)}\big)^{-\frac{k}2}\big|\l_1^{(n)}-\l_2^{(n)}\big| \rar 0 \text{ as } n \rar +\infty \Big\} \,\, , \label{regind} \eeq then \eqref{l1-l2} implies that $\bar{k}\geq 1$. We introduce now the following

\begin{definition} An {\it almost-Berger sequence} is any sequence $(\mu^{(n)}) \subset \eH_{0,3}$ which corresponds to a sequence of left-invariant metrics on $\fSU(2)$ as in \eqref{aB} such that \begin{itemize}
\item[i)] $\e^{(n)} \rar 0$ and $\l_i^{(n)}\rar 1$ as $n \rar +\infty$;
\item[ii)] $\sec(\mu^{(n)})$ is uniformly bounded, i.e. \eqref{l1-l2} holds true.
\end{itemize} The positive value ${\rm reg}(\mu^{(n)})\=\bar{k}$ defined in \eqref{regind} is called {\it regularity index of $(\mu^{(n)})$}. \label{aBseq} \end{definition}

This nomenclature is motivated by the following fact. If $\l_1^{(n)} = \l_2^{(n)} = 1$ for any $n \in \bN$, then $g_{\mu^{(n)}}$ comes from the canonical variation of the round metric on $S^3=\fSU(2)$ with respect to the Hopf fibration $S^1 \rar S^3 \rar S^2$ (see \cite[p. 252]{Bes}). The $3$-sphere endowed with such a metric is commonly named {\it Berger sphere}. This construction provides the first non trivial example of collapsing sequence with bounded curvature (see e.g. \cite{CG1,CG2}). \smallskip

From the very definition of regularity index, the following properties hold: \begin{itemize}
\item[$\bcdot$] $\big(\e^{(n)}\big)^{-\frac{k}2}\big|\l_1^{(n)}-\l_2^{(n)}\big| \rar 0$ as $n \rar +\infty$ for any integer $0 \leq k \leq {\rm reg}(\mu^{(n)})$;
\item[$\bcdot$] if ${\rm reg}(\mu^{(n)})=\bar{k}$ is finite, then $\big(\e^{(n)}\big)^{-\frac{\bar{k}+1}2}\big|\l_1^{(n)}-\l_2^{(n)}\big|$ is bounded away from zero.
\end{itemize} Let us stress also that, from \eqref{muS3}, it follows that any almost Berger sequence $(\mu^{(n)})$ verifies $$\mu^{(n)}(e_0,e_1) \rar -\infty \,\, , \quad \mu^{(n)}(e_0,e_2) \rar +\infty \,\, , \quad \mu^{(n)}(e_1,e_2) \rar 0 \quad \text{ as $n\rar +\infty$ } \,\, .$$ This means that they never converge algebraically to an element of $\eH_3$. Actually, the fact that almost-Berger sequences cannot converge algebraically can be derived also from \cite[Prop D]{Ped1}. \smallskip

\subsection{Curvature estimates} \hfill \par

We investigate the behavior of the curvature along an almost-Berger sequence $(\mu^{(n)}) \subset \eH_{0,3}$. Concerning the sectional curvature, from \eqref{sec} we directly get

\begin{prop} Let $(\mu^{(n)})$ be an almost-Berger sequence. Then $$\sec(\mu^{(n)})(e_0{\wedge}e_1) \rar 0 \,\, , \quad \sec(\mu^{(n)})(e_1{\wedge}e_2) \rar 4 \,\, , \quad \sec(\mu^{(n)})(e_0{\wedge}e_2)\rar 0 \quad \text{ as $n \rar +\infty$ }$$ if and only if ${\rm reg}(\mu^{(n)})\geq2$. \label{propsec} \end{prop}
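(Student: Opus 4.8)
The plan is to compute the three sectional curvatures directly from the explicit formulas in \eqref{sec} and take limits, keeping careful track of the regularity index. First I would substitute the almost-Berger hypotheses $\e^{(n)}\to0$ and $\l_i^{(n)}\to1$ into each of the three expressions for $\sec(\mu^{(n)})(e_i{\wedge}e_j)$, which are rational functions of $\e,\l_1,\l_2$ with the denominator $\l_1\l_2\to1$. The delicate terms are those containing the factor $\e^{-1}(\l_2-\l_1)$, since $\e^{(n)}\to0$ makes $\e^{-1}$ blow up while $(\l_2^{(n)}-\l_1^{(n)})\to0$; their product is an indeterminate form whose behavior is precisely governed by the regularity index.

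The key observation is that the condition ${\rm reg}(\mu^{(n)})\geq2$ is equivalent, by the first bulleted property following Definition \ref{aBseq}, to $(\e^{(n)})^{-1}|\l_1^{(n)}-\l_2^{(n)}|\to0$ (take $k=2$ in that property). This is exactly the quantity controlling every problematic term. So I would isolate in each formula the contributions that survive in the limit: the terms $\e$ and $2(\l_2-\l_1)$ and $2(\l_1+\l_2)$ tend to $0$, $0$, and $4$ respectively, while the terms $\e^{-1}(\l_2-\l_1)(\cdots)$ and $\e^{-1}(\l_2-\l_1)^2$ must be analyzed separately.

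For the second curvature $\sec(\mu^{(n)})(e_1{\wedge}e_2)=\frac1{\l_1\l_2}\big({-}3\e+2(\l_1+\l_2)+\e^{-1}(\l_2-\l_1)^2\big)$, note that $\e^{-1}(\l_2-\l_1)^2=\e\cdot\big(\e^{-1}(\l_2-\l_1)\big)^2$, so if ${\rm reg}\geq2$ this tends to $0\cdot0^2=0$, and the limit is $\frac{1}{1}\cdot(0+2\cdot2+0)=4$. For the first and third, the offending terms are $\e^{-1}(\l_2-\l_1)(\l_1+3\l_2)$ and $\e^{-1}(\l_2-\l_1)(\l_2+3\l_1)$; since $(\l_1+3\l_2),(\l_2+3\l_1)\to4$ is bounded, these behave like a bounded quantity times $\e^{-1}(\l_2-\l_1)\to0$, so they vanish and both sectional curvatures tend to $0$. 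This establishes the forward implication. For the converse, I would argue contrapositively: if ${\rm reg}(\mu^{(n)})=1$, then by the second bulleted property $(\e^{(n)})^{-1}|\l_1^{(n)}-\l_2^{(n)}|$ is bounded away from zero, so (along a subsequence) it converges to a nonzero limit $L$; then the term $\e^{-1}(\l_2-\l_1)^2=\e\cdot\big(\e^{-1}(\l_2-\l_1)\big)^2$ still tends to $0$ (giving $\sec(e_1{\wedge}e_2)\to4$ regardless), but the terms $\e^{-1}(\l_2-\l_1)(\l_1+3\l_2)\to4L\neq0$ force $\sec(e_0{\wedge}e_1)$ and $\sec(e_0{\wedge}e_2)$ to nonzero limits, contradicting the stated convergence to $0$.

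The main obstacle is purely bookkeeping rather than conceptual: one must correctly match each surviving and vanishing term to the appropriate power of $\e^{-1}(\l_2-\l_1)$ and verify, via the two bulleted properties characterizing ${\rm reg}$, that the threshold for vanishing of the linear-in-$(\l_2-\l_1)$ terms is exactly $k=2$ while the quadratic term vanishes already at $k=1$. I expect the only genuine subtlety to be the converse direction, where one cannot simply take limits of the whole sequence (the quantity $\e^{-1}(\l_2-\l_1)$ need not converge) and so must pass to a subsequence along which it has a nonzero limit or diverges, and check that in either case at least one of $\sec(e_0{\wedge}e_1)$, $\sec(e_0{\wedge}e_2)$ fails to tend to $0$.
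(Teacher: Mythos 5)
Correct, and essentially the same as the paper's argument: the paper asserts that the proposition follows \emph{directly} from the formulas \eqref{sec}, which is precisely the term-by-term limit computation you carry out, with ${\rm reg}(\mu^{(n)})\geq2$ unwound as $(\e^{(n)})^{-1}|\l_1^{(n)}-\l_2^{(n)}|\rar0$ and the converse settled contrapositively by passing to a subsequence. Two harmless remarks: by \eqref{l1-l2} the quantity $(\e^{(n)})^{-1}|\l_1^{(n)}-\l_2^{(n)}|$ is bounded, so the divergent case you allow for in your last paragraph never actually occurs; and in the converse the subsequential limit of the \emph{signed} quantity $(\e^{(n)})^{-1}(\l_2^{(n)}-\l_1^{(n)})$ is $\pm L\neq0$ rather than $L$, which still forces nonzero limits of $\sec(\mu^{(n)})(e_0{\wedge}e_1)$ and $\sec(\mu^{(n)})(e_0{\wedge}e_2)$, so nothing changes.
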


We give now an estimate for the covariant derivatives of the curvature tensor along $(\mu^{(n)})$ in terms of the regularity index. More precisely

\begin{prop} Let $(\mu^{(n)})$ be an almost Berger sequence with regularity index ${\rm reg}(\mu^{(n)})=\bar{k}\geq2$. Then, for any integer $k \geq 1$ there exists a constant $L_k>0$ such that \beq \big|\Rm^k(\mu^{(n)})\big|_{\st} \leq L_k\Big(\big(\e^{(n)}\big)^{\frac12}+\big(\e^{(n)}\big)^{-\frac{k+2}2}\big|\l_1^{(n)}-\l_2^{(n)}\big|\Big) \quad \text{ for any $n \in \bN$ } \, .  \label{crucialestimate} \eeq \end{prop}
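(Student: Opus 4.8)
The plan is to exploit the fact that along the sequence the ``large'' part of the intrinsic torsion, namely $S^{\mu}(e_0)=c^{\mu}_0E_{12}$ with $c^{\mu}_0\sim 2\e^{-1/2}$, points in the single direction $E_{12}$, and to organise the whole computation by the weights of the rotation generated by $E_{12}$ (we suppress the superscript $(n)$ throughout). First I would record the asymptotics of the coefficients \eqref{c}, using ${\rm reg}(\mu^{(n)})\geq2$ (so $|\l_1-\l_2|\leq\e$ for $n$ large, the finitely many remaining indices being absorbed into $L_k$). From \eqref{c} one has the exact identities $c^{\mu}_1+c^{\mu}_2=2\e^{1/2}(\l_1\l_2)^{-1/2}$ and $c^{\mu}_1-c^{\mu}_2=2(\l_2-\l_1)(\e\l_1\l_2)^{-1/2}$, whence, since $\l_i\to1$,
\[ |c^{\mu}_0|\leq C\e^{-1/2},\qquad |c^{\mu}_1+c^{\mu}_2|\leq C\e^{1/2},\qquad |c^{\mu}_1-c^{\mu}_2|\leq C\e^{-1/2}|\l_1-\l_2|. \]
Together with \eqref{Rm0}--\eqref{sec} this shows that $\Rm^0(\mu)$ splits into a diagonal piece $\sec(\mu)(e_1{\wedge}e_2)E_{12}=O(1)$ and an off-diagonal piece controlled by $\sec(\mu)(e_0{\wedge}e_1)\pm\sec(\mu)(e_0{\wedge}e_2)$, of sizes $2c^{\mu}_1c^{\mu}_2=O(\e)$ and $-2c^{\mu}_0(c^{\mu}_1-c^{\mu}_2)=O(\e^{-1}|\l_1-\l_2|)$ respectively.

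Next I would complexify and grade everything by the weights of $\ad(E_{12})$ and of the action of $E_{12}$ on $\bR^3$. Writing $f_{\pm}=e_1\pm\sqrt{-1}\,e_2$ and $F_{\pm}=E_{01}\pm\sqrt{-1}\,E_{02}$, the generator $E_{12}$ acts diagonally with eigenvalues $0$ on $e_0,E_{12}$ and $\mp\sqrt{-1}$ on $f_{\pm},F_{\pm}$; I call the corresponding integer the weight. The point of \eqref{covder} is that $S^{\mu}=\sum_a (e^a)\otimes S^{\mu}(e_a)\in(\bR^3)^*\otimes\so(3)$ is then the sum of a weight-$0$ part, carrying the coefficients $c^{\mu}_0$ (the $E_{12}$-summand) and $c^{\mu}_1+c^{\mu}_2$ (the $F_{\pm}$-summands), and of weight-$(\pm2)$ parts carrying only $c^{\mu}_1-c^{\mu}_2$. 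Likewise $\Rm^0(\mu)$ has weight-$0$ components (of sizes $O(1)$ and $O(\e)$) and weight-$(\pm2)$ components (of size $O(\e^{-1}|\l_1-\l_2|)$), and no others. The essential structural fact, already visible in \eqref{ind123}, is that the weight-$0$ summand coming from $S^{\mu}(e_0)=c^{\mu}_0E_{12}$ acts on a weight-$W$ tensor by multiplication by $\sqrt{-1}\,c^{\mu}_0W$: it \emph{annihilates} the dominant weight-$0$ curvature and only amplifies the charged components.

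Then I would run the induction on $k$ furnished by the recurrence \eqref{SRm}, read as $\Rm^{k}(\mu)=-S^{\mu}\cdot\Rm^{k-1}(\mu)$, which therefore shifts weights by $0$ or $\pm2$. Writing $M^{(W)}_k$ for the norm of the weight-$W$ part of $\Rm^k(\mu)$, the preceding remarks give the triangular estimate (with constant depending only on $k$)
\[ M^{(W)}_k\leq C\big(|c^{\mu}_0|\,|W|+|c^{\mu}_1+c^{\mu}_2|\big)M^{(W)}_{k-1}+C\,|c^{\mu}_1-c^{\mu}_2|\big(M^{(W-2)}_{k-1}+M^{(W+2)}_{k-1}\big). \]
Since weights are even and bounded by $k+2$, this is a finite system, and I would prove by induction that $M^{(0)}_k\leq L_k\e^{1/2}$ for $k\geq1$, while $M^{(\pm2j)}_k\leq L_k\e^{-(k+2)/2}|\l_1-\l_2|^{\,j}$ for $j\geq1$. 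Indeed the diagonal term $|c^{\mu}_0|\,|W|\,M^{(W)}_{k-1}$ on the channel $W=\pm2$ reproduces exactly the pure-$e_0$ growth computed in \eqref{ind123}; the weight-$0$ channel is fed only through the small factor $|c^{\mu}_1+c^{\mu}_2|=O(\e^{1/2})$ (the $c^{\mu}_0$-term killing it), so it cannot exceed $O(\e^{1/2})$; and each step off the $W=\pm2$ channel costs a factor $|c^{\mu}_1-c^{\mu}_2|=O(\e^{-1/2}|\l_1-\l_2|)$, i.e.\ an extra power of $|\l_1-\l_2|$, so that every channel with $|W|\geq4$ stays strictly below $\e^{-(k+2)/2}|\l_1-\l_2|$. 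Summing the finitely many $M^{(W)}_k$ and using $|\l_1-\l_2|^{\,j}\leq|\l_1-\l_2|$ then yields \eqref{crucialestimate}.

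The main obstacle is precisely the mixed components such as $\Rm^k(\mu)(e_0,\dots,e_0,e_1|\,\cdot\,)$: peeling derivatives one at a time through \eqref{boundRm} gives only the bound $\e^{(2-k)/2}$, which for $k\geq3$ violates \eqref{crucialestimate} whenever $|\l_1-\l_2|$ is much smaller than $\e^{2}$. What rescues the estimate is not the crude bound \eqref{boundRm} but the two cancellations made transparent by the weight grading: the annihilation of the $O(1)$ weight-$0$ curvature by the large operator $c^{\mu}_0E_{12}$, and the $|\l_1-\l_2|$-cost of every weight-changing move. Verifying these rigorously --- i.e.\ the exact weight decompositions of $S^{\mu}$ and $\Rm^0(\mu)$ and the closed triangular recursion above --- is the technical heart of the argument; once they are in place, the induction is routine.
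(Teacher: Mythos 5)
Your structural claims are all correct, and your framework is genuinely different from the paper's proof: the paper never introduces a grading, but instead combines the closed formula \eqref{ind123} for pure-$e_0$ derivatives, the peeling bound \eqref{boundRm} for the $e_1,e_2$ slots, and the Ricci-type commutation estimate \eqref{change} (coming from identity (R1)(vi)) to sort arbitrarily mixed components, closing an induction that carries quadratic correction terms. In your approach the exact identities $c^{\mu}_1+c^{\mu}_2=2\sqrt{\e/(\l_1\l_2)}$ and $c^{\mu}_1-c^{\mu}_2=2(\l_2-\l_1)/\sqrt{\e\l_1\l_2}$, the weight decompositions of $S^{\mu}$ and $\Rm^0(\mu)$, and the annihilation of weight-$0$ tensors by $c^{\mu}_0E_{12}$ are all right. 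However, the induction as you state it does not close, and the step that fails is exactly the one you call decisive: the claim $M^{(0)}_k\leq L_k\,\e^{1/2}$. Your justification --- ``the weight-$0$ channel is fed only through the small factor $|c^{\mu}_1+c^{\mu}_2|$'' --- contradicts your own recursion, which also feeds weight $0$ from the charged channels through the term $C\,|c^{\mu}_1-c^{\mu}_2|\big(M^{(-2)}_{k-1}+M^{(+2)}_{k-1}\big)$. Inserting your inductive bounds (and suppressing the index $n$), this term has size
$$\e^{-\frac12}|\l_1-\l_2|\cdot\e^{-\frac{k+1}2}|\l_1-\l_2|\;=\;\e^{-\frac{k+2}2}|\l_1-\l_2|^2\,,$$
and the hypothesis $\bar{k}\geq2$ allows $|\l_1-\l_2|\asymp\e^{3/2}$ (the borderline case $\bar{k}=2$, which is precisely the one exploited later in the proofs of Theorem \ref{MAIN-C} and Corollary \ref{MAINCOR-D}); then the feeding term is $\asymp\e^{\frac{4-k}2}$, which dwarfs $\e^{1/2}$ for every $k\geq4$. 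So the asserted weight-$0$ bound cannot be derived from the recursion you set up, and the inductive step is broken.

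The gap is repairable without leaving your framework, because \eqref{crucialestimate} does not actually require the neutral channel to stay as small as $\e^{1/2}$. Replace the weight-$0$ hypothesis by $M^{(0)}_k\leq L_k\big(\e^{\frac12}+\e^{-\frac{k+2}2}|\l_1-\l_2|^2\big)$ for $k\geq1$, and keep $M^{(W)}_k\leq L_k\,\e^{-\frac{k+2}2}|\l_1-\l_2|$ for every $W\neq0$ (the extra powers $|\l_1-\l_2|^{j}$ in your statement are then unnecessary). Both steps now close: for $W=0$ the recursion gives $C\big(\e+\e^{-\frac{k+3}2}|\l_1-\l_2|^2\big)$; for $W\neq0$ the diagonal term gives $C\,\e^{-\frac{k+3}2}|\l_1-\l_2|$, while the feeding from weight $0$ gives $C\,\e^{-\frac12}|\l_1-\l_2|\big(\e^{\frac12}+\e^{-\frac{k+2}2}|\l_1-\l_2|^2\big)\leq C\,\e^{-\frac{k+3}2}|\l_1-\l_2|$, using $\e<1$ and $|\l_1-\l_2|\leq1$. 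Summing the finitely many channels and using $|\l_1-\l_2|^2\leq|\l_1-\l_2|$ then yields \eqref{crucialestimate}. With this correction your argument becomes a complete and rather clean alternative to the paper's proof: the representation-theoretic grading replaces the paper's argument-sorting via \eqref{change}, at the price of slightly weaker (but sufficient) control on the neutral channel.
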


\begin{proof} Let us assume, without loss of generality, that $\e^{(n)}<1$ and $\big|\l_1^{(n)}-\l_2^{(n)}\big|<1$ for any $n \in \bN$. Firstly, since $\bar{k}\geq2$ by hypothesis, from \eqref{c} we get $$c^{\mu^{(n)}}_1, c^{\mu^{(n)}}_2 \sim \big(\e^{(n)}\big)^{\frac12}$$ and so from \eqref{covder}, \eqref{cov1R} and \eqref{boundRm} it follows that for any $k\geq1$ there exists $L_{k,0}>0$ such that \beq \big|\Rm^k(\mu^{(n)})(e_{i_1},{\dots},e_{i_k}|e_{j_1}{\wedge}e_{j_2})\big|_{\st}\leq L_{k,0} \big(\e^{(n)}\big)^{\frac{k}2} \label{Rmkno0} \eeq for any $1\leq i_1 , {\dots} , i_k \leq 2$, $0\leq j_1 < j_2 \leq 2$.

Secondly, by \eqref{c} we notice that $$c^{\mu^{(n)}}_0 \sim 2\big(\e^{(n)}\big)^{-\frac12}$$ and so from \eqref{ind123} it follows that for any $k \geq 1$ there exists $L_{k,k}>0$ such that \beq \big|\Rm^k(\mu^{(n)})(e_0,{\dots},e_0|e_{j_1}{\wedge}e_{j_2})\big|_{\st} \leq L_{k,k}\big(\e^{(n)}\big)^{-\frac{k+2}2}\big|\l_1^{(n)}-\l_2^{(n)}\big| \label{Rmkk} \eeq for any $0\leq j_1 < j_2 \leq 2$. Moreover, from \eqref{covder}, \eqref{boundRm} and \eqref{Rmkk}, it follows that for any $k,r \in \bZ$ with $k \geq 2$ and $1\leq r \leq k-1$, there exists $L_{k,r}>0$ such that \beq \big|\Rm^k(\mu^{(n)})(e_{i_1},{\dots},e_{i_{k-r}},\underbrace{e_0,{\dots},e_0}_{r}|e_{j_1}{\wedge}e_{j_2})\big|_{\st}\leq L_{k,r}\big(\e^{(n)}\big)^{-\frac{r+2}2}\big|\l_1^{(n)}-\l_2^{(n)}\big| \label{Rmknor} \eeq for any $1\leq i_1 , {\dots} , i_{k-r} \leq 2$, $0\leq j_1 < j_2 \leq 2$.

Thirdly, a direct computation based on \eqref{SRm} and the last identity in (R1) (see Section \ref{Riemtup}) shows that for any $k,r \in \bZ$ with $k\geq0$ and $0\leq r \leq k$, there exists $N_{k,r}>0$ such that \begin{align}
\Big|\big|\Rm^{k+2}(\mu^{(n)})(e_{\a_1},{\dots},e_{\a_r},e_{\ell_1},e_{\ell_2},e_{\a_{r+1}},{\dots},e_{\a_k}|e_{j_1}&{\wedge}e_{j_2})\big|_{\st} - \nonumber \\
-\big|\Rm^{k+2}(\mu^{(n)})(e_{\a_1},{\dots},e_{\a_r},e_{\ell_2},e_{\ell_1},e_{\a_{r+1}},&{\dots},e_{\a_k}|e_{j_1}{\wedge}e_{j_2})\big|_{\st}\Big| \leq \label{change} \\
&\leq N_{k,r} \sum_{q=0}^r \big|\Rm^q(\mu^{(n)})\big|_{\st} \big|\Rm^{k-q}(\mu^{(n)})\big|_{\st} \nonumber
\end{align} for any $0 \leq \a_1 , {\dots} , \a_k \leq 2$, $0 \leq \ell_1 , \ell_2 \leq 2$, $0\leq j_1 < j_2 \leq 2$.

Finally, we are ready to prove \eqref{crucialestimate} by induction on $k\geq1$. For $k=1$, it follows directly from \eqref{c} and \eqref{cov1R}. Let us fix now $k >1$ and assume that \eqref{crucialestimate} holds for any $1 \leq k' \leq k$. Then

\begin{align*} \big|\Rm^{k+1}(\mu^{(n)})\big|_{\st} &\leq C_1 \sum_{\substack{0\leq\a_1,{\dots},\a_{k+1}\leq2 \\ 0\leq j_1 < j_2 \leq 2}}\big|\Rm^{k+1}(\mu^{(n)})(e_{\a_1},{\dots},e_{\a_{k+1}}|e_{j_1}{\wedge}e_{j_2})\big|_{\st} \\
&\hskip -6pt \overset{\eqref{change}}{\leq} C_2 \Bigg\{ \sum_{r=0}^{k+1}\sum_{\substack{1\leq i_1,{\dots},i_{(k+1)-r}\leq2 \\ 0\leq j_1 < j_2 \leq 2}}\big|\Rm^{k+1}(\mu^{(n)})(e_{i_1},{\dots},e_{i_{(k+1)-r}},\underbrace{e_0,{\dots},e_0}_{r}|e_{j_1}{\wedge}e_{j_2})\big|_{\st}+ \\
&\hskip 213pt +\sum_{q=0}^{k-1} \big|\Rm^q(\mu^{(n)})\big|_{\st} \big|\Rm^{(k-1)-q}(\mu^{(n)})\big|_{\st} \Bigg\} \\
&\hskip -29pt \overset{\eqref{Rmkno0}, \eqref{Rmkk}, \eqref{Rmknor}}{\leq} C_3 \Bigg\{ \big(\e^{(n)}\big)^{\frac{k+1}2}{+}\sum_{i=0}^{k+1}\big(\e^{(n)}\big)^{-\frac{i+2}2}\big|\l_1^{(n)}-\l_2^{(n)}\big|+ \\
&\hskip 213pt +\sum_{q=0}^{k-1} \big|\Rm^q(\mu^{(n)})\big|_{\st} \big|\Rm^{(k-1)-q}(\mu^{(n)})\big|_{\st} \Bigg\} \\
&\leq C_4 \Bigg\{ \big(\e^{(n)}\big)^{\frac12}+\big(\e^{(n)}\big)^{-\frac{k+3}2}\big|\l_1^{(n)}-\l_2^{(n)}\big|+\sum_{q=0}^{k-1} \big|\Rm^q(\mu^{(n)})\big|_{\st} \big|\Rm^{(k-1)-q}(\mu^{(n)})\big|_{\st} \Bigg\}
\end{align*} where $C_i$ are some suitable positive constants. Finally, by Proposition \ref{propsec} and the inductive hypothesis, we obtain $$\sum_{q=0}^{k-1} \big|\Rm^q(\mu^{(n)})\big|_{\st} \big|\Rm^{k-q-1}(\mu^{(n)})\big|_{\st} \leq \tilde{C} \Big( \big(\e^{(n)}\big)^{\frac12}+\big(\e^{(n)}\big)^{-\frac{k+3}2}\big|\l_1^{(n)}-\l_2^{(n)}\big| \Big)$$ for another suitable $\tilde{C}>0$. Therefore, the thesis follows. \end{proof}

\begin{corollary} Let $(\mu^{(n)})$ be an almost Berger sequence with regularity index ${\rm reg}(\mu^{(n)})=\bar{k}$. \begin{itemize}
\item[a)] If $\bar{k}\geq3$, then $\big|\Rm^k(\mu^{(n)})\big|_{\st} \rar 0$ as $n \rar +\infty$ for any $1\leq k \leq \bar{k}-2$.
\item[b)] If $\bar{k}\geq2$ and it is finite, the following conditions hold true: \begin{itemize}
\item[$\bcdot$] $\big|\Rm^{\bar{k}-1}(\mu^{(n)})\big|_{\st}$ does not converge to $0$ as $n \rar +\infty$;
\item[$\bcdot$] $\big|\Rm^{\bar{k}-1}(\mu^{(n)})\big|_{\st}$ is bounded if and only if $\big(\e^{(n)}\big)^{-\frac{\bar{k}+1}2}\big|\l_1^{(n)}-\l_2^{(n)}\big|$ is bounded.
\end{itemize}
\item[c)] If $\bar{k}$ is finite, then $\big|\Rm^k(\mu^{(n)})\big|_{\st} \rar +\infty$ as $n \rar +\infty$ for any integer $k\geq\bar{k}$.
\end{itemize} \label{highcd} \end{corollary}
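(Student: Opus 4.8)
The plan is to sandwich $|\Rm^k(\mu^{(n)})|_{\st}$ between the upper bound \eqref{crucialestimate} and a matching lower bound extracted from \eqref{ind123}, and then to read off each of (a), (b), (c) from the bookkeeping of powers of $\e^{(n)}$ dictated by the regularity index $\bar{k}$. Throughout I write $y_n\=|\l^{(n)}_1-\l^{(n)}_2|$ and use the two properties of $\bar{k}$ recorded above: $(\e^{(n)})^{-k/2}y_n\rar0$ for every integer $0\leq k\leq\bar{k}$, and, when $\bar{k}$ is finite, $(\e^{(n)})^{-(\bar{k}+1)/2}y_n$ is bounded away from zero. Note also that when $\bar{k}$ is finite this last property forces $y_n>0$ for all large $n$.

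First I would establish the lower bound. By \eqref{ind123} the norm of the component $\Rm^k(\mu^{(n)})(e_0,{\dots},e_0\,|\,e_0{\wedge}e_1)$ is a fixed positive multiple of $2^k|c^{\mu^{(n)}}_0|^{k+1}|c^{\mu^{(n)}}_1-c^{\mu^{(n)}}_2|$, so that
$$|\Rm^k(\mu^{(n)})|_{\st}\geq \text{const}\cdot 2^k\,|c^{\mu^{(n)}}_0|^{k+1}\,|c^{\mu^{(n)}}_1-c^{\mu^{(n)}}_2| \,\, .$$
From \eqref{c}, using $\l^{(n)}_i\rar1$ and $\e^{(n)}\rar0$, one reads off $c^{\mu^{(n)}}_0\sim2(\e^{(n)})^{-1/2}$ together with $|c^{\mu^{(n)}}_1-c^{\mu^{(n)}}_2|=2y_n/\sqrt{\e^{(n)}\l^{(n)}_1\l^{(n)}_2}\sim 2(\e^{(n)})^{-1/2}y_n$. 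Hence there is $c_k>0$ with
$$|\Rm^k(\mu^{(n)})|_{\st}\geq c_k\,(\e^{(n)})^{-(k+2)/2}\,y_n \qquad\text{for all large }n \,\, ,$$
a lower bound that exactly mirrors the leading term of \eqref{crucialestimate}.

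With both inequalities in hand, parts (a) and (c) are immediate. For (a), when $\bar{k}\geq3$ and $1\leq k\leq\bar{k}-2$, the estimate \eqref{crucialestimate} applies (as $\bar{k}\geq2$) and both of its terms vanish in the limit: $(\e^{(n)})^{1/2}\rar0$ trivially, while $(\e^{(n)})^{-(k+2)/2}y_n\rar0$ since $k+2\leq\bar{k}$; therefore $|\Rm^k(\mu^{(n)})|_{\st}\rar0$. For (c), when $\bar{k}$ is finite and $k\geq\bar{k}$, I would factor
$$(\e^{(n)})^{-(k+2)/2}\,y_n=(\e^{(n)})^{-(k+1-\bar{k})/2}\cdot\big[(\e^{(n)})^{-(\bar{k}+1)/2}\,y_n\big] \,\, ;$$
since $k+1-\bar{k}\geq1$ the first factor tends to $+\infty$ while the bracket is bounded away from zero, so the lower bound forces $|\Rm^k(\mu^{(n)})|_{\st}\rar+\infty$. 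Here only the lower bound is invoked, so no restriction $\bar{k}\geq2$ is required.

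Part (b), with $k=\bar{k}-1\geq1$, is the one place both estimates are needed at once. The lower bound reads $|\Rm^{\bar{k}-1}(\mu^{(n)})|_{\st}\geq c_{\bar{k}-1}(\e^{(n)})^{-(\bar{k}+1)/2}y_n$, whose right-hand side is bounded away from zero, so the norm cannot converge to $0$. For the equivalence, \eqref{crucialestimate} gives $|\Rm^{\bar{k}-1}(\mu^{(n)})|_{\st}\leq L_{\bar{k}-1}\big((\e^{(n)})^{1/2}+(\e^{(n)})^{-(\bar{k}+1)/2}y_n\big)$ with $(\e^{(n)})^{1/2}$ bounded, so boundedness of $(\e^{(n)})^{-(\bar{k}+1)/2}y_n$ implies boundedness of the norm, and the lower bound supplies the converse. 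The substantive analysis has already been done in the preceding Proposition, so the only genuinely delicate point here is verifying that the single component $\Rm^k(\cdot)(e_0,{\dots},e_0|e_0{\wedge}e_1)$ from \eqref{ind123} already realizes the worst-case growth rate $(\e^{(n)})^{-(k+2)/2}y_n$ appearing in \eqref{crucialestimate}; once the lower and upper bounds are aligned in this way, everything else reduces to tracking powers of $\e^{(n)}$ against the thresholds set by $\bar{k}$.
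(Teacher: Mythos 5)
Your proof is correct and follows essentially the same route as the paper: the upper bound \eqref{crucialestimate} combined with a lower bound extracted from \eqref{ind123}, which the paper records as the asymptotic relation $\big|\Rm^k(\mu^{(n)})(e_0,{\dots},e_0|e_0{\wedge}e_1)\big|_{\st}\sim 2^{2(k+1)}\big(\e^{(n)}\big)^{-\frac{k+2}2}\big|\l_1^{(n)}-\l_2^{(n)}\big|$ and uses exactly as you do for claims (b) and (c). The only difference is organizational: the paper proves the lower bound under the assumption $\bar k\geq2$ and therefore treats the case $\bar k=1$ of claim (c) separately via \eqref{cov1R} and \eqref{ind123}, whereas you note that the lower bound alone, valid for every $k\geq1$ regardless of the regularity index, settles (c) uniformly --- a minor streamlining of the same argument.
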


\begin{proof} Claim (a) follows directly from \eqref{crucialestimate}. Let us assume now that $\bar{k}\geq2$. Then, by \eqref{ind123} we get \beq \big|\Rm^k(\mu^{(n)})(e_0,{\dots},e_0|e_0{\wedge}e_1)\big|_{\st} \sim 2^{2(k+1)}\big(\e^{(n)}\big)^{-\frac{k+2}2}\big|\l_1^{(n)}-\l_2^{(n)}\big| \label{Rm000} \eeq for any $k \geq 1$. Therefore, claim (b) follows from \eqref{crucialestimate} and \eqref{Rm000}, while claim (c) follows from \eqref{Rm000}. Finally, if $\bar{k}=1$, then claim (c) follows from \eqref{cov1R} and \eqref{ind123}. \end{proof}

\section{Proof of the main results} \label{mainsec} \setcounter{equation} 0

Let us consider the $2$-parameter family $$\{\mu_{\star}=\mu_{\star}(\e,\d) : \e,\d \in \bR , \, \e >0 , \, 0\leq\d<1 \} \subset \eH_{0,3}$$ defined by \beq \mu_{\star}(e_0,e_1)=-2\sqrt{\tfrac1{\e}\tfrac{2+\d}{2-\d}}\,e_2 \,\, , \quad \mu_{\star}(e_0,e_2)=+2\sqrt{\tfrac1{\e}\tfrac{2-\d}{2+\d}}\,e_1 \,\, , \quad \mu_{\star}(e_1,e_2)=-2\sqrt{\tfrac{4\e}{4-\d^2}}\,e_0 \,\, .  \label{star} \eeq By means of \eqref{muS3}, each $\mu_{\star}(\e,\d) \in \eH_{0,3}$ correspond to the Lie group $(\fSU(2),g_{\mu_{\star}(\e,\d)})$ with the diagonal left-invariant metric \beq g_{\mu_{\star}(\e,\d)}(X_0,X_0)=\e \,\, , \quad g_{\mu_{\star}(\e,\d)}(X_1,X_1)=1-\tfrac\d2 \,\, , \quad g_{\mu_{\star}(\e,\d)}(X_2,X_2)=1+\tfrac\d2 \,\, , \label{gmustar} \eeq where $(X_0,X_1,X_2)$ is the standard basis of $\su(2)$ given in \eqref{stb}. Let us consider also the Riemannian symmetric space $(\bC P^1 \times\bR, g_{{}_{\rm FS}}{+}dt^2)$, which corresponds to the element $\mu_{\zero} \in \eH_{1,3}$ defined by $$\mu_{\zero}(e_0,e_1)=-2e_2 \,\, , \quad \mu_{\zero}(e_0,e_2)=+2e_1 \,\, , \quad \mu_{\zero}(e_1,e_2)=-2e_0 \,\, , \quad \mu_{\zero}(e_3,\,\cdot\,)=0 \,\, .$$ Clearly it holds that \beq \Rm^0(\mu_{\zero})=\bigg(\!\!{\scalefont{0.6}\begin{array}{ccc} \!4\!\! & & \\ & \!\!0\!\! & \\ & & \!\!0\! \end{array}}\!\!\bigg) \,\, , \quad \Rm^k(\mu_{\zero})=0 \quad \text{ for any $k\geq1$ .} \label{RMmuzero} \eeq

\begin{proof}[Proof of Theorem \ref{MAIN-C}] Let us fix $m=3$, and hence $\imath(3)=1$ (see Section \ref{Riemtup}). Fix an integer $s\geq3$ and consider the sequence $\mu^{(n)} \= \mu_{\star}(\e^{(n)},\d^{(n)})$ defined by $$\e^{(n)}\=\tfrac1{n^2} \,\, , \quad \d^{(n)}\=\tfrac1{n^{s+\frac52}} \,\, .$$ Then we get $$\big(\e^{(n)}\big)^{-\frac{k}2}\big|\l_1^{(n)}-\l_2^{(n)}\big|= \big(\e^{(n)}\big)^{-\frac{k}2}\d^{(n)} =n^{k-s-\frac52} \, , \quad \lim_{n\rar+\infty} n^{k-s-\frac52} =\begin{cases} 0 & \text{ if } 0 \leq k \leq s+2 \\ +\infty & \text{ if } k> s+2 \end{cases}$$ and hence from \eqref{gmustar} it comes that $(\mu^{(n)})$ is an almost Berger sequence with regularity index ${\rm reg}(\mu^{(n)})=s+2$ (see Definition \ref{aBseq}). From Proposition \ref{propsec} and Corollary \ref{highcd} we get $$\begin{gathered} \Rm^0(\mu^{(n)})\rar\bigg(\!\!{\scalefont{0.6}\begin{array}{ccc} \!4\!\! & & \\ & \!\!0\!\! & \\ & & \!\!0\! \end{array}}\!\!\bigg) \,\, , \quad \Rm^k(\mu^{(n)})\rar0 \quad \text{ for any $1\leq k\leq s$ } \,\, , \\ \big|\Rm^{k'}(\mu^{(n)})\big|_{\st}\rar+\infty \quad \text{ for any $k'\geq s+1$ } \,\, . \end{gathered}$$ Therefore the thesis comes directly from \eqref{RMmuzero}. For $m>3$, it is sufficient to consider the Riemannian product $(\fSU(2) \times \bR^{m-3},g_{\mu^{(n)}}{+}g_{\rm flat})$, where $\mu^{(n)}$ is constructed as above choosing $s \geq \imath(m)+2$. \end{proof}

\begin{proof}[Proof of Corollary \ref{MAINCOR-D}] As in the proof of Theorem \ref{MAIN-C}, we can reduce to the case $m=3$. Fix an integer $k \geq \imath(3)+4=5$ and consider the sequences $$\e^{(n)}\=\tfrac1{n^2} \,\, , \quad \d^{(n)}\=\tfrac1{n^{k+1}} \,\, .$$ Then, $\mu^{(n)} \= \mu_{\star}(\e^{(n)},\d^{(n)})$ is an almost-Berger sequence with regularity index ${\rm reg}(\mu^{(n)})=k$ and therefore, by arguing as in the proof of Theorem \ref{MAIN-C}, we get that $(\mu^{(n)})$ converges $(k{-}2)$-infinitesimally to $\mu_{\zero}$. Moreover, since $$\big(\e^{(n)}\big)^{-\frac{k+1}2}\big|\l_1^{(n)}-\l_2^{(n)}\big| = \big(\e^{(n)}\big)^{-\frac{k+1}2}\d^{(n)}=1 \,\, ,$$ by Corollary \ref{highcd} it follows that \beq \tfrac1C<\big|\Rm^{k-1}(\mu^{(n)})\big|_{\st}<C \quad \text{ for some $C>1$ } \,\, , \qquad \big|\Rm^{k'}(\mu^{(n)})\big|_{\st}\to+\infty \quad \text{ for any $k'\geq k$ } \,\, . \label{Rmk-1C} \eeq We can choose $R>0$ big enough in such a way that $|\sec(R\cdot\mu^{(n)})| \leq 1$, where the scaled bracket $R\cdot\mu^{(n)}$ is defined by \eqref{scaling}. Letting $\tilde{\mu}^{(n)} \= R\cdot\mu^{(n)}$ and $\tilde{\mu}_{\zero} \= R\cdot\mu_{\zero}$, by Theorem \ref{great} it follows that we can pass to a subsequence in such a way that $(\eB_{\tilde{\mu}^{(n)}},\hat{g}_{\tilde{\mu}^{(n)}})$ converges to $(\eB_{\tilde{\mu}_{\zero}},\hat{g}_{\tilde{\mu}_{\zero}})$ in the pointed $\cC^{k,\a}$-topology for any $0<\a<1$. Finally, let us assume by contradiction that $(\eB_{\tilde{\mu}^{(n)}},\hat{g}_{\tilde{\mu}^{(n)}})$ admits a convergent subsequence in the pointed $\cC^{k+1}$-topology. By \eqref{RMmuzero}, this would imply that $\big|\Rm^{k-1}(\mu^{(n_j)})\big|_{\st} \rar 0$ as $j \rar +\infty$ for some $(n_j) \subset \bN$, which is impossible by means of \eqref{Rmk-1C}. \end{proof}

\begin{proof}[Proof of Corollary \ref{MAINCOR-E}] Let us first prove that the moduli spaces $\eH_1(1)$, $\eH_2(1)$ are compact in the pointed $\cC^{\infty}$-topology.
For $m=1$, $\eH_1=\eH_{0,1}$ contains only the bracket $\mu=0$ which correspond to the straight line $(\bR=\bR/\{0\},dt^2)$. Therefore in this case the statement is trivially true. For $m=2$, the moduli space $\eH_2(1)$ decomposes as $\eH_2(1)=\eH_{0,2}(1) \cup \eH_{1,2}(1)$. Then, by the well known classification of real $2$-dimensional Lie algebras, we get $$\eH_{0,2}(1)=\{0\} \cup \{R \cdot \mu_{\rm sol} : R \in (0,1] \, \} \,\, ,$$ where the bracket $\mu=0$ correspond to $(\bR^2=\bR^2/\{0\},g_{\rm flat})$, while $\mu_{\rm sol}(e_1,e_2)=e_1$ correspond to the solvmanifold presentation of the hyperbolic space $(\bR H^2=\fG_{\mu_{\rm sol}}/\{0\},g_{\rm hyp})$. On the other hand $$\eH_{1,2}(1)=\{\mu_0\} \cup \{R \cdot \mu_{\epsilon} : R \in (0,1] \, , \,\, \epsilon=\pm1\} \,\, ,$$ where $$\mu_0(e_0,e_1)=+e_2 \,\, , \quad \mu_0(e_0,e_2)=-e_2 \,\, , \quad \mu_0(e_1,e_2)=0$$ corresponds to $(\bR^2=\fSE(2)/\fSO(2),g_{\rm flat})$, while $$\mu_{\epsilon}(e_0,e_1)=+e_2 \,\, , \quad \mu_{\epsilon}(e_0,e_2)=-e_2 \,\, , \quad \mu_{\epsilon}(e_1,e_2)=\epsilon \,e_0 \,\, , \quad \text{ with } \epsilon=\pm1$$ correspond to $(S^2=\fSO(3)/\fSO(2),g_{\rm round})$ and $(\bR H^2=\fSL(2,\bR)/\fSO(2),g_{\rm hyp})$, respectively. Therefore, $\eH_2(1)$ is compact with respect to the algebraic convergence topology and hence the claim follows by Proposition \ref{thmLau} and Corollary \ref{MAINCOR-B}. Finally, to prove that $\eH^{\rm loc}_m(1)$ is not compact in the pointed $\cC^3$-topology for any $m \geq 3$, one can reduce to the case $m=3$ and applying again Corollary \ref{highcd} and Remark \ref{remscaling} in order to construct a sequence $(\mu^{(n)}) \subset \eH_3(1)$ such that $\big|\Rm^1(\mu^{(n)})\big|_{\st}\rar+\infty$ as $n \rar +\infty$. \end{proof} \smallskip

\appendix

\section{} \label{appendixA} \setcounter{equation} 0

\subsection{The subset $\eH_{q,m}$ is dense in $\eH^{\rm loc}_{q,m}$} \hfill \par

The aim of this section is to provide a proof of the following

\begin{prop} The set $\eH_{q,m}$ is dense in $\eH^{\rm loc}_{q,m}$ with respect to the standard topology induced by $\cV_{q,m}$. \label{dense} \end{prop}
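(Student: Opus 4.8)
The plan is to exploit the fact, recorded immediately after Definition \ref{otLa}, that for an orthogonal transitive Lie algebra $(\gg=\gh+\gm,\la\,,\ra)$ the isotropy representation $\ad(\cdot)|_{\gm}\colon\gh\to\so(\gm,\la\,,\ra)$ is faithful. Thus $\gh$ embeds in the compact Lie algebra $\so(\gm)$, hence is itself a \emph{compact} Lie algebra and splits as $\gh=\gz\oplus\gs$, where $\gz=\gz(\gh)$ is its centre and $\gs=[\gh,\gh]$ is semisimple of compact type. Fixing $\mu\in\eH^{\rm loc}_{q,m}$ and writing $\gg=\gg_\mu$, $\gh=\gh_\mu$, with $\fG=\fG_\mu$ the simply connected group and $\fH=\fH_\mu\subset\fG$ the connected subgroup with $\Lie(\fH)=\gh$, the idea is to show that the failure of $\gh$ to be Malcev-closed in $\fG$ is concentrated entirely in the central directions $\gz$, and then to remove it by an arbitrarily small rationalising perturbation of $\gz$.

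First I would reduce the regularity question to the centre. Since $\gs$ is semisimple of compact type, the analytic subgroup $\fS=\exp(\gs)\subset\fG$ is a continuous image of the compact simply connected group integrating $\gs$, hence compact and therefore closed. As $\gz$ and $\gs$ commute one has $\fH=\exp(\gz)\cdot\fS$, and since the closure of a product with a compact commuting factor satisfies $\ol{AK}=\ol{A}\,K$, this gives $\ol{\fH}=\ol{\exp(\gz)}\cdot\fS$. Moreover $\exp(\gz)$ centralises the compact group $\fS$, so its closure does too, whence $\ol{\exp(\gz)}$ commutes with $\gs$ and $\ol{\gh}^{\fG}=\ol{\gz}^{\fG}\oplus\gs$ (the sum is direct because $\ol{\gz}^{\fG}$ is abelian and commutes with the centreless $\gs$). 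Consequently $\gh$ is Malcev-closed in $\fG$ if and only if the abelian subgroup $\exp(\gz)$ is closed in $\fG$. If it is, then $\mu$ is already regular and there is nothing to prove; so assume it is not.

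Next I would analyse and rationalise the abelian winding. The closure $\ol{Z}\=\ol{\exp(\gz)}$ is a closed connected abelian subgroup of $\fG$, hence isomorphic to $\bR^{a}\times\fT^{r}$ with $r\geq1$ (the case $r=0$ would force a subspace of $\bR^{a}$, which is always closed), and it centralises $\gs$ by the previous step; its Lie algebra $\ol{\gz}^{\fG}$ is an abelian subalgebra of $\gg$ in which $\gz$ sits as a dense winding. Identifying $\Lie(\ol Z)\cong\bR^{a}\oplus\bR^{r}$ with $\fT^{r}=\bR^{r}/\L$ for a lattice $\L$, the classical description of closed connected subgroups of $\bR^{a}\times\fT^{r}$ shows that a $c$-dimensional subspace $\gz'\subset\ol{\gz}^{\fG}$ (with $c\=\dim\gz$) integrates to a closed subgroup precisely when $\gz'$ is rational with respect to the $\bQ$-structure $\bQ^{a}\oplus\L_{\bQ}$, and such rational $c$-planes are dense in the Grassmannian of $c$-planes of $\ol{\gz}^{\fG}$. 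I would therefore choose rational $c$-planes $\gz_{\e}\subset\ol{\gz}^{\fG}$ with $\gz_{\e}\to\gz$ and set $\gh_{\e}\=\gz_{\e}\oplus\gs$. Because $\ol{\gz}^{\fG}$ centralises $\gs$, each $\gh_{\e}$ is again a $q$-dimensional compact subalgebra of $\gg$ with centre $\gz_{\e}$; faithfulness of $\ad(\cdot)|_{\gm}$ is an open condition, so it persists for $\gz_{\e}$ close to $\gz$ and (h3) holds. The subgroup $\fH_{\e}=\exp(\gz_{\e})\cdot\fS$ is the product of the closed $\exp(\gz_{\e})$ with the compact $\fS$, hence closed, so $\gh_{\e}$ is Malcev-closed and the resulting class is regular. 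Choosing an $\ad(\gh_{\e})$-invariant complement $\gm_{\e}$ and invariant inner product depending continuously on $\gz_{\e}$ (by averaging over the compact $\fH_{\e}$) and an adapted frame $u_{\e}\to u$, one obtains $\mu_{\e}=u_{\e}^{*}[\,\cdot\,,\cdot\,]\in\eH_{q,m}$ with $\mu_{\e}\to\mu$ in $\cV_{q,m}$, proving density.

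The main obstacle, in my view, is the reduction of the second paragraph: one must rigorously justify $\ol{\gh}^{\fG}=\ol{\gz}^{\fG}\oplus\gs$, i.e.\ that the non-closedness of $\fH$ is purely toral and purely central, which rests on the compactness of $\fS$ together with the behaviour of closures of commuting products. The perturbation $\gz\leadsto\gz_{\e}$ must then be carried out \emph{within the fixed abelian algebra} $\ol{\gz}^{\fG}$, preserving both the subalgebra property, which is guaranteed precisely because $\ol{\gz}^{\fG}$ centralises $\gs$, and the rank $(q,m)$, which is kept by taking $\gz_{\e}$ of the same dimension $c$ as $\gz$. The density of rational planes giving closed subgroups and the continuity of the invariant complement and metric are then routine.
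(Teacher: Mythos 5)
Your strategy coincides with the paper's: split $\gh=\gz(\gh)\oplus[\gh,\gh]$, show that the failure of Malcev-closedness is concentrated in the centre, and replace $\gz(\gh)$ by nearby subalgebras of $\ol{\gz(\gh)}^{\fG}$ integrating to closed subgroups while keeping $[\gh,\gh]$ fixed. Where the paper obtains the identity $\ol{\gh}^{\fG}=[\gh,\gh]+\ol{\gz(\gh)}^{\fG}$ by quoting that $\ol{\gh}^{\fG}$ is faithfully represented in $\so(\gg_{\mu},\la\,,\ra'_{\mu})$ (hence reductive) together with \cite[Thm 3, p. 52]{OV}, you derive it by hand from Weyl's theorem (compactness, hence closedness, of $\fS=\exp(\gs)$) and the identity $\ol{AK}=\ol{A}\,K$ for compact $K$; this is a legitimate and more self-contained substitute. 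Your rationalisation step is in substance the paper's Lemma \ref{dimLv} and Corollary \ref{densesubal}, and you are right to work in $\bR^{a}\times\fT^{r}$ rather than in a torus, since $\ol{\exp(\gz)}$ need not be compact. One imprecision there: ``integrates to a closed subgroup \emph{precisely when} $\gz'$ is rational'' is false as an equivalence when $a\geq1$ (an irrational-slope line in $\bR\times\fT^{1}$ is closed), but only the true implication ``rational $\Rightarrow$ closed'' plus density is used: a $\bQ$-defined plane meets $\{0\}\oplus\bR^{r}$ in a $\bQ$-defined subspace, and that is what closedness requires.

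The one step that fails as written is the construction of $(\gm_{\e},\la\,,\ra_{\e})$ ``by averaging over the compact $\fH_{\e}$'': the group $\fH_{\e}=\exp(\gz_{\e})\cdot\fS$ is closed but in general \emph{not} compact, because $\exp(\gz_{\e})\cong\bR^{a'}\times\fT^{r'}$ may well have $a'\geq1$ (nothing forces the centre to generate a torus), so no Haar average is available. The repair is exactly what the paper does, and it is already implicit in your setup: the product $\la\,,\ra'_{\mu}$ of Remark \ref{RemSp} is $\Ad(\fH_{\mu})$-invariant, hence $\Ad(\ol{\fH}_{\mu})$-invariant by continuity (the stabilizer of a tensor is closed), hence $\ad(\gh_{\e})$-invariant for every subalgebra $\gh_{\e}=\gz_{\e}\oplus\gs$ of $\ol{\gh}^{\fG}$. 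So one simply takes $\gm_{\e}$ to be the $\la\,,\ra'_{\mu}$-orthogonal complement of $\gh_{\e}$ and restricts $\la\,,\ra'_{\mu}$ to it. Besides fixing the gap, this makes the convergence $\mu_{\e}\rar\mu$ in $\cV_{q,m}$ transparent, since all the data are then expressed through one fixed inner product and subspaces converging in the Grassmannian.
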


In order to do this, we begin with a preliminary lemma.

\begin{lemma} Let $\fT^m=\bZ^m\backslash\bR^m$ be the $n$-torus and let $\fL_v$ be the 1-parameter subgroup of $\fT^m$ generated by a fixed element $v=(v^1,{\dots},v^m) \in \bR^m = \Lie(\fT^m)$. Then $$ \dim \ol{\fL_v} = \dim_{\bQ} \vspan_{\bQ} (v^1,{\dots},v^m) \,\, , $$ where $\vspan_{\bQ} (v^1,{\dots},v^m) \subset \bR$ is the $\bQ$-subspace of $\bR$ generated by the components of $v$. \label{dimLv} \end{lemma}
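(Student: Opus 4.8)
The plan is to identify $\ol{\fL_v}$ by Pontryagin duality, thereby reducing the computation of its dimension to a rank count of the integer linear relations among the components of $v$, which in turn is governed entirely by their $\bQ$-linear dependence.

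First I would describe $\fL_v$ explicitly as the image of the continuous homomorphism $\bR \rar \fT^m$, $t \mapsto [tv]$, where $[\,\cdot\,]$ denotes the projection $\bR^m \rar \fT^m$. Since $\ol{\fL_v}$ is a closed connected subgroup of the compact abelian group $\fT^m$, it is a subtorus, and it is determined by its annihilator in the character group. Recalling that every continuous character of $\fT^m$ has the form $\chi_k([x]) = e^{2\pi i \la k,x\ra}$ for a unique $k \in \bZ^m$, I would observe that $\chi_k$ vanishes identically on $\fL_v$ if and only if $\la k, tv\ra \in \bZ$ for all $t \in \bR$, i.e. if and only if $\la k, v\ra = 0$. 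Since characters are continuous, the annihilator of $\fL_v$ coincides with that of $\ol{\fL_v}$, and it equals the subgroup $\Lambda \= \{ k \in \bZ^m : \la k,v\ra = 0\}$; by the duality between subtori and their annihilators one then has $\dim \ol{\fL_v} = m - \rank_{\bZ} \Lambda$.

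The second step is the arithmetic heart of the argument: relating $\rank_{\bZ}\Lambda$ to the $\bQ$-linear dependence of the $v^i$. Setting $d \= \dim_{\bQ}\vspan_{\bQ}(v^1,{\dots},v^m)$, the space of rational relations $\Lambda_{\bQ} \= \{q \in \bQ^m : \la q,v\ra = 0\}$ has dimension $m-d$ over $\bQ$, since it is the kernel of the $\bQ$-linear surjection $\bQ^m \rar \vspan_{\bQ}(v^1,{\dots},v^m)$, $q \mapsto \sum_i q_i v^i$. Clearing denominators shows that every $q \in \Lambda_{\bQ}$ is a rational multiple of an element of $\Lambda$, so $\Lambda$ spans $\Lambda_{\bQ}$ over $\bQ$ and $\rank_{\bZ}\Lambda = \dim_{\bQ}\Lambda_{\bQ} = m-d$. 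Combining with the previous step gives $\dim \ol{\fL_v} = m - (m-d) = d$, as claimed.

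The main obstacle, and really the only non-formal point, is the passage in the first step from ``$\ol{\fL_v}$ is a closed subgroup'' to ``it is exactly the common kernel of the characters trivial on $\fL_v$''. This is precisely the statement that a closed subgroup of a compact abelian group equals its bi-annihilator, i.e. that the continuous characters separate the points of the Hausdorff quotient $\fT^m/\ol{\fL_v}$; I would either invoke this standard consequence of Pontryagin duality directly, or, to keep the argument self-contained, argue instead that the Lie algebra $\gw \= \Lie(\ol{\fL_v})$ is a rational subspace of $\bR^m$ (subtori correspond to rational subspaces) and is the smallest rational subspace containing $v$, then check by the same relation count that this minimal rational subspace has dimension exactly $d$. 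Either route converts the topological closure into the linear-algebraic rank computation carried out above.
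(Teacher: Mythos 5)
Your proposal is correct and follows essentially the same route as the paper: both identify $\ol{\fL_v}$ as the common kernel of the characters $\chi_k$, $k\in\bZ^m$, vanishing on $\fL_v$ (the paper cites this closed-subgroup/bi-annihilator characterization from Rickert, you invoke Pontryagin duality), and both then reduce $\dim\ol{\fL_v}=m-\rank_{\bZ}\Lambda$ to the count $\rank_{\bZ}\Lambda=m-\dim_{\bQ}\vspan_{\bQ}(v^1,\dots,v^m)$. The only difference is cosmetic: you spell out the clearing-of-denominators argument for the rank identity, which the paper asserts without proof.
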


\begin{proof} We recall that the closed subgroups of a locally compact topological group are characterized as being those which are intersections of kernels of continuous characters (see \cite[Rem 1.20]{Ri}). Moreover, the continuous characters of $\fT^m$ are all and only the applications $$\c_r : \fT^m \rar S^1 \,\, , \quad \c_r(\bZ^mx) \= e^{2\pi \sqrt{-1} \la x,r\ra_{\st}} \quad \text{ with $r \in \bZ^m$ } \, .$$ Since $\fL_v \subset \ker(\c_r)$ if and only if $(\c_r)_*(tv) \in \bZ$ for any $t \in \bR$, i.e. if and only if $\la v,r\ra_{\st}=0$, we get $$\ol{\fL_v} = \bigcap_{\substack{r \in \bZ^n \\ \la v,r\ra_{\st}=0}}\ker(\c_r) \,\, .$$ So by a straightforward computation it comes that $$\dim \ol{\fL_v} = \dim_{\bR}\bigcap_{\substack{r \in \bZ^m \\ \la v,r\ra_{\st}=0}}\ker(\c_r)_* = m-\rank_{\bZ}F \,\, , $$ where we denoted by $F$ the free $\bZ$-module $F\=\{r \in \bZ^m: \la v,r\ra_{\st}=0 \}$. Since the rank of $F$ is $$\rank_{\bZ}F= m-\dim_{\bQ} \vspan_{\bQ} (v^1,{\dots},v^m) \,\, ,$$ the thesis follows. \end{proof}

From Lemma \ref{dimLv}, we get the following corollary. For the definition of Malcev-closure, see Definition \ref{Malcl}.

\begin{corollary} Let $\fT^m=\bZ^m\backslash\bR^m$ be the $m$-torus, ${\rm Gr}_s(\gt)$ the Grassmannian of $s$-planes inside $\gt\=\Lie(\fT^m)$ and ${\rm Gr}_s^{\star}(\gt)$ the subset of those $\ga \in {\rm Gr}_s(\gt)$ such that $\ol{\ga}^{\mathsmaller{\fT^m}}=\ga$. Then ${\rm Gr}_s^{\star}(\gt)$ is dense in ${\rm Gr}_s(\gt)$ for any integer $0\leq s\leq m$. \label{densesubal} \end{corollary}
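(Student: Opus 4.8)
The plan is to characterize ${\rm Gr}_s^{\star}(\gt)$ as precisely the set of \emph{rational} $s$-planes, i.e. those $\ga \in {\rm Gr}_s(\gt)$ admitting a basis of vectors lying in $\bQ^m \subset \bR^m = \gt$, and then to exploit the density of $\bQ^m$ in $\bR^m$ together with the continuity of the span map. The whole statement then splits into two independent pieces: every rational $s$-plane is Malcev-closed, and every $s$-plane can be approximated by rational ones.

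First I would prove that a rational $s$-plane is Malcev-closed. Let $\ga$ be spanned by integer vectors $w_1,\dots,w_s \in \bZ^m$ (we may clear denominators). By Lemma \ref{dimLv}, for each $i$ we have $\dim\ol{\fL_{w_i}}=\dim_{\bQ}\vspan_{\bQ}(w_i^1,\dots,w_i^m)=1$, so each $1$-parameter subgroup $\fL_{w_i}$ coincides with its closure and is therefore a circle subgroup of $\fT^m$. Since $\fT^m$ is abelian, the connected subgroup $\fL_{\ga}$ with $\Lie(\fL_{\ga})=\ga$ is exactly the product $\fL_{w_1}\cdots\fL_{w_s}$, which is the continuous image of the compact group $\fL_{w_1}\times\dots\times\fL_{w_s}$ under the multiplication map. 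Hence $\fL_{\ga}$ is compact, thus closed in $\fT^m$, and so $\ol{\ga}^{\mathsmaller{\fT^m}}=\ga$, that is $\ga \in {\rm Gr}_s^{\star}(\gt)$.

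It then remains to approximate an arbitrary $\ga \in {\rm Gr}_s(\gt)$ by rational $s$-planes. I would fix a basis $v_1,\dots,v_s$ of $\ga$ and, using the density of $\bQ^m$ in $\bR^m$, choose rational vectors $w_1,\dots,w_s$ with each $w_i$ arbitrarily close to $v_i$. Since linear independence is an open condition, for a small enough perturbation the $w_i$ remain linearly independent, so $\ga' \= \vspan_{\bR}(w_1,\dots,w_s)$ is a rational $s$-plane; moreover the assignment sending a linearly independent $s$-tuple to its span is continuous into ${\rm Gr}_s(\gt)$, so $\ga'$ is as close to $\ga$ as we wish in the Grassmannian topology. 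By the previous step $\ga' \in {\rm Gr}_s^{\star}(\gt)$, and density follows.

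The only genuine content lies in the first step, where one must recognise that a rational subspace exponentiates to a compact subgroup: this is exactly the point at which Lemma \ref{dimLv} is used, guaranteeing that each rational line closes up to a circle, after which the abelian structure and compactness of $\fT^m$ do the rest. The approximation step is then a routine consequence of the density of $\bQ^m$ and the continuity of the span map on frames, so I do not expect any serious obstacle there.
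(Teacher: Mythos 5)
Your proof is correct and follows essentially the same route as the paper, which states the corollary as an immediate consequence of Lemma \ref{dimLv}: rational $s$-planes are Malcev-closed (each integer generator closes up to a circle by the lemma, and a product of circles is compact, hence closed in $\fT^m$), and rational $s$-planes are dense in ${\rm Gr}_s(\gt)$. The paper leaves this argument implicit, and your write-up supplies exactly the details it suppresses.
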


Finally, we are ready to prove Proposition \ref{dense}.

\begin{proof}[Proof of Theorem \ref{dense}] Fix $\mu \in \eH^{\rm loc}_{q,m} \setminus \eH_{q,m}$. Let $\la\,,\ra'_{\mu}$ be the $\ad(\gh_{\mu})$-invariant Euclidean product on $\gg_{\mu}$ defined in Remark \ref{RemSp} and $\fG_{\mu}$ the simply connected Lie group with $\Lie(\fG_{\mu})=\gg_{\mu}$. Then, the Malcev-closure $\ol{\gh}_{\mu}^{\mathsmaller{\fG_{\mu}}}$ of $\gh_{\mu}$ in $\fG_{\mu}$ turns out to be faithfully represented by its adjoint action as a subalgebra of $\so(\gg_{\mu},\la\,,\ra'_{\mu})$ (see \cite[Sec 3]{Sp2}) and hence it is  reductive. Therefore by \cite[Thm 3, p. 52]{OV} it follows that $$\ol{\gh}_{\mu}^{\mathsmaller{\fG_{\mu}}}=[\gh_{\mu},\gh_{\mu}]+\gt_{\mu} \,\, , \quad \text{ with $\gt_{\mu} \subset \gg_{\mu}$ abelian such that $\ol{\gz(\gh_{\mu})}^{\mathsmaller{\fG_{\mu}}}=\gt_{\mu}$ } .$$ By Corollary \ref{densesubal}, we can pick a sequence of subalgebras $\ga^{(n)} \subset \gt_{\mu}$ which converges to $\gz(\gh_{\mu})$ with respect to the standard Euclidean topology such that $\ol{\ga^{(n)}}^{\mathsmaller{\fG_{\mu}}}=\ga^{(n)}$. Then we define $\gh^{(n)} \= [\gh_{\mu},\gh_{\mu}]+\ga^{(n)}$, $\gm^{(n)}$ as the $\la,\ra'_{\mu}$-orthogonal complement of $\gh^{(n)}$ inside $\gg_{\mu}$ and $\la,\ra^{(n)}\=\la,\ra'|_{\gm^{(n)}\times\gm^{(n)}}$. It is easy to check that $(\gg_{\mu}=\gh^{(n)}+\gm^{(n)},\la,\ra^{(n)})$ is a regular orthogonal transitive Lie algebra, and so it corresponds uniquely to an element $\mu^{(n)} \in \eH_{q,m}$. Finally, by the very construction, we can conclude that $\mu^{(n)} \rar \mu$ as $n \rar +\infty$ in the standard topology induced by $\cV_{q,m}$. \end{proof}

\subsection{The infinitesimal convergence in the sense of Lauret} \hfill \par

In this section, we recall the definition of infinitesimal convergence introduced by Lauret in \cite{Lau1} for sequences of globally homogeneous Riemannian spaces and we prove that it is equivalent to the notion of infinitesimal convergence according to Definition \ref{infconv}. \smallskip

We recall that a sequence $(\mu^{(n)}) \subset \eH_m$ {\it converges infinitesimally to $\mu^{(\infty)} \in \eH_m$ in the sense of Lauret} if there exists a sequence of smooth embeddings $$\phi^{(n)}: \eB_{g_{\mu^{(\infty)}}}\big(e_{\mu^{(\infty)}}{\fH}_{\mu^{(\infty)}},\e^{(n)}\big) \subset {\fG}_{\mu^{(\infty)}}/{\fH}_{\mu^{(\infty)}} \rar {\fG}_{\mu^{(n)}}/{\fH}_{\mu^{(n)}} \,\, , \quad \text{ with $\e^{(n)} \rar 0^+$}$$ such that $\phi^{(n)}(e_{\mu^{(\infty)}}{\fH}_{\mu^{(\infty)}})=e_{\mu^{(n)}}{\fH}_{\mu^{(n)}}$ and \beq \Big|\big((\n^{\mu})^k\big(\phi^{(n)*}g_{\mu^{(n)}}-g_{\mu^{(\infty)}}\big)\big)_{e_{\mu}{\fH}_{\mu}}\Big|_{g_{\mu}} \rar 0 \quad \text{ as $n \rar +\infty$ , \,\, for any integer $k\geq0$ . } \label{infL'} \eeq Fixing a system of local coordinates centered at $e_{\mu}{\fH}_{\mu}$ and letting $g^{(n)}_{ij}$, $g^{(\infty)}_{ij}$ be the components of $\phi^{(n)*}g_{\mu^{(n)}}$ and $g_{\mu^{(\infty)}}$, respectively, it is easy to realize that \eqref{infL'} is equivalent to require that \beq \p^{q}g^{(n)}_{ij}(0) \rar \p^{q}g^{(\infty)}_{ij}(0) \quad \text{ as $n \rar +\infty$ } \label{infL''} \eeq for any multi-index $q$.

\begin{prop} Let $(\mu^{(n)}) \subset \eH_m$ be a sequence, $\mu^{(\infty)} \in \eH_m$. Then, $(\mu^{(n)})$ converges infinitesimally to $\mu^{(\infty)}$ in the sense of Lauret if and only if it converges infinitesimally to $\mu^{(\infty)}$ according to Definition \ref{infconv}. \label{propapp} \end{prop}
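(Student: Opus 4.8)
The plan is to convert both notions into the convergence of a single numerical array and to compare them order by order. By \eqref{infL''}, Lauret's convergence amounts to the convergence of every partial derivative $\p^q g^{(n)}_{ij}(0)$ of the metric coefficients in a fixed chart, whereas convergence in the sense of Definition \ref{infconv} means $\rho^s(\mu^{(n)})\rar\rho^s(\mu^{(\infty)})$ in $\cR^s(m)=\fO(m)\backslash\wt{\cR}^s(m)$ for every $s\geq\imath(m)+2$, i.e. the existence of $O_n\in\fO(m)$ with $O_n\cdot\Rm^k(\mu^{(n)})\rar\Rm^k(\mu^{(\infty)})$ for every $k\geq0$. The link between the two arrays is the classical Taylor expansion of a metric in geodesic normal coordinates: relative to an orthonormal frame $u$ at the base point, each coefficient $\p^q g_{ij}(0)$ is a universal polynomial $P^q_{ij}$ in the $u$-components of $\Rm^0,\Rm^1,\dots,\Rm^{|q|-2}$, and conversely each $\Rm^k$ is a universal expression, rational in $g_{ij}(0)$ and polynomial in the higher derivatives $\p^q g_{ij}(0)$ with $|q|\leq k+2$, of the jet of $g$.

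Granting this, the implication ``Lauret $\Rightarrow$ Definition \ref{infconv}'' is straightforward. Each $\phi^{(n)}$ is an isometry onto its image fixing the origin, so the tuple $\big(\Rm^0,\dots,\Rm^s\big)$ of $\phi^{(n)*}g_{\mu^{(n)}}$ at $0$ is a representative of $\rho^s(\mu^{(n)})$. Reading these off from the metric jet through the universal formulas above, after orthonormalising the coordinate frame by a Gram--Schmidt procedure depending continuously on the (positive definite) limit $g^{(\infty)}_{ij}(0)$, and invoking $\p^q g^{(n)}_{ij}(0)\rar\p^q g^{(\infty)}_{ij}(0)$, we obtain convergence in $\cR^s(m)$ for every $s$.

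For the converse I would build the embeddings explicitly from normal coordinates. Let $\psi_\infty$ be the normal chart at the origin of $\fG_{\mu^{(\infty)}}/\fH_{\mu^{(\infty)}}$ attached to the standard frame, let $\psi_n$ be the normal chart at the origin of $\fG_{\mu^{(n)}}/\fH_{\mu^{(n)}}$ attached to the frame rotated by $O_n$, and set $\phi^{(n)}\=\psi_n^{-1}\circ\psi_\infty$ on the geodesic ball of radius $\e^{(n)}\=\min\{\inj_o(\mu^{(n)}),\inj_o(\mu^{(\infty)}),1/n\}>0$; this is an embedding fixing the origin. Being the identity when read in normal coordinates, $\phi^{(n)*}g_{\mu^{(n)}}$ has, in the fixed chart $\psi_\infty$, the Taylor coefficients $P^q_{ij}$ evaluated at $O_n\cdot\Rm^\bullet(\mu^{(n)})$, while $g_{\mu^{(\infty)}}$ has the same polynomials evaluated at $\Rm^\bullet(\mu^{(\infty)})$. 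Hence \eqref{infL''}, and thus Lauret's convergence, follows as soon as the single rotations $O_n$ realise $O_n\cdot\Rm^k(\mu^{(n)})\rar\Rm^k(\mu^{(\infty)})$ simultaneously for all $k$.

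The main obstacle is precisely the production of such $O_n$, since Definition \ref{infconv} furnishes, a priori, a different rotation at each order $s$. I would resolve this using the stabilisation encoded in the Singer invariant. Pick $O_n\in\fO(m)$ realising the convergence at the critical order $s_0\=\imath(m)+2$; I claim the same $O_n$ works at every order. If not, along some subsequence the frames fail to match at an order $s_1>s_0$; passing to a further subsequence along which $O_n$ and the order-$s_1$ rotations both converge, their limits differ by an $R_*\in\fO(m)$ that fixes the truncated tuple $\big(\Rm^0(\mu^{(\infty)}),\dots,\Rm^{s_0}(\mu^{(\infty)})\big)$. By the reconstruction in Theorem \ref{NTcurvmod} (equivalently, by the stabilisation $\gi(k)=\gi(k_{g})$ together with \eqref{SRm}, which propagates $\Rm^{k+1}$ from $\Rm^k$), any orthogonal map fixing this truncated tuple is the differential at the origin of a local isometry of $(\fG_{\mu^{(\infty)}}/\fH_{\mu^{(\infty)}},g_{\mu^{(\infty)}})$, and therefore fixes $\Rm^k(\mu^{(\infty)})$ for every $k$; so $R_*$ fixes the order-$s_1$ tuple as well, contradicting the failure to match. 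With the single sequence $(O_n)$ in hand, the construction of the previous paragraph yields \eqref{infL''} and completes the proof.
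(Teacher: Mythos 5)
Your proposal is correct, and its skeleton coincides with the paper's: the direction ``Lauret $\Rightarrow$ Definition \ref{infconv}'' is read off from \eqref{infL''} (curvature jets are universal continuous expressions in the metric jet), and the converse rests on the fact that in normal coordinates the Taylor coefficients of the metric are universal polynomials in $\Rm^0,\dots,\Rm^k$ at the origin, which is exactly Proposition \ref{normcoord} in the paper (there proved in detail, following [BGM]). The genuine difference is how the two arguments handle the frame-alignment issue: Definition \ref{infconv} a priori provides, for each order $s$, a different sequence of rotations $a^{(n)}_s \in \fO(m)$, while the converse direction needs a \emph{single} sequence realising $a^{(n)}\cdot\Rm^k(\mu^{(n)})\rar\Rm^k(\mu^{(\infty)})$ for every $k$. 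The paper simply asserts the existence of such a sequence (with $a^{(n)}\rar I_m$) and moves on; you prove it, via compactness of $\fO(m)$ plus the rigidity statement that an orthogonal map fixing the tuple up to order $\imath(m)+2$ is the differential of a local isometry of the limit space and hence fixes all higher tuples. That argument is sound, provided you lean on the full Singer/Nicolodi--Tricerri theorem (local isometry with \emph{prescribed} differential) rather than on your parenthetical alternative: the stabilisation $\gi(k)=\gi(k_g)$ is only a Lie-algebra-level statement and \eqref{SRm} involves $S^{\mu}$, which is not determined by the truncated tuple, so that aside does not stand on its own. Two further remarks: a more elementary diagonal extraction (choosing $a^{(n)}=a^{(n)}_{s(n)}$ with $s(n)\rar\infty$ slowly) would also produce the single sequence, without any rigidity input; and by working on small normal balls of radius $\min\{\inj_o(\mu^{(n)}),\inj_o(\mu^{(\infty)}),1/n\}$ you avoid the paper's preliminary rescaling into $\eH_m(1)$ and its geometric models, at no cost since Lauret's definition allows $\e^{(n)}\rar0^+$. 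So your write-up is not a different proof in substance, but it does supply a justification for the one step the paper leaves implicit.
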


Before proceeding with the proof of Proposition \ref{propapp}, for the convenience of the reader we give a detailed proof for the following known fact (see e.g. \cite[Prop E.III.7]{BGM}).

\begin{prop} For any integer $k\geq0$, every partial derivative $\frac{\p^{k+2}g_{ij}}{\p^{q_1}{x^1}{\dots}\p^{q_m}{x^m}}\big|_0$ of order $k+2$ of a real analytic Riemannian metric $g$ in normal coordinates is expressible as a polynomial in the components of $\Rm^0(g)|_0,{\dots},\Rm^k(g)|_0$. \label{normcoord} \end{prop}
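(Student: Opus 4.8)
The plan is to express the metric in normal (geodesic) coordinates centered at the origin and to exploit the standard recursive structure that relates the Taylor coefficients of $g_{ij}$ to the covariant derivatives of curvature. The starting point is the radial gauge: in normal coordinates one has $g_{ij}(x)x^j = \d_{ij}x^j$, equivalently $x^ig_{ij}(x)=x^i\d_{ij}$, which already forces $g_{ij}(0)=\d_{ij}$ and $\p_kg_{ij}(0)=0$ (so the Christoffel symbols vanish at the origin). The key auxiliary object is the Jacobi equation along radial geodesics: writing $g$ in polar form, the matrix-valued function encoding the metric satisfies a second-order ODE whose inhomogeneous term is built from the curvature tensor and its covariant derivatives evaluated along the geodesic. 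Since $g$ is real analytic, all these quantities admit convergent Taylor expansions at the origin.

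First I would set up the comparison between coordinate derivatives and covariant derivatives at the origin. Because all Christoffel symbols vanish at $0$ in normal coordinates, at that single point $\n^g$ and the ordinary derivative $\p$ differ only by lower-order terms that are themselves polynomial in the metric jet; by induction on the order one sees that $\p^{q}g_{ij}(0)$ and the components of the iterated covariant derivatives $(\n^g)^{\ell}\Rm(g)|_0$ determine one another polynomially, up to the corresponding order. Concretely, I would prove by induction on $k\geq0$ the statement that every partial derivative of $g_{ij}$ of order $k+2$ at $0$ is a universal polynomial in the components of $\Rm^0(g)|_0,\dots,\Rm^k(g)|_0$. The base case $k=0$ is the classical formula $\p_a\p_bg_{ij}(0) = -\tfrac13\big(R_{iabj}+R_{ibaj}\big)|_0$ (in suitable index conventions), which one derives directly from the radial-gauge identity by differentiating it three times and using $g_{ij}(0)=\d_{ij}$, $\p g(0)=0$.

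For the inductive step I would differentiate the radial-gauge constraint (or, equivalently, expand the Jacobi/Gauss-lemma ODE) to one higher order. Differentiating $x^ig_{ij}(x)=x^j$ a total of $k+3$ times and setting $x=0$ yields a linear algebraic relation of the form $(k+3)\,\p^{q}g_{ij}(0) = (\text{lower-order partials of }g\text{ at }0)$, where the right-hand side combines partial derivatives of $g$ of order $\leq k+2$ together with the order-$(k+1)$ derivative information coming from the curvature. By the inductive hypothesis every such lower-order partial is already a polynomial in $\Rm^0(g)|_0,\dots,\Rm^{k-1}(g)|_0$, while the genuinely new contribution at this order is precisely the curvature derivative $(\n^g)^{k+1}\Rm$, i.e. the components of $\Rm^{k+1}(g)|_0$ as defined in \eqref{defRmk}. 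Since the numerical coefficient $(k+3)$ is nonzero, I can solve for $\p^{q}g_{ij}(0)$ and conclude that it is a polynomial in $\Rm^0(g)|_0,\dots,\Rm^{k+1}(g)|_0$, which advances the induction.

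The main obstacle I anticipate is purely bookkeeping rather than conceptual: one must track carefully which covariant-derivative order of the curvature actually enters at each coordinate order, and verify that passing from the polar/Jacobi ODE back to the coordinate partials does not secretly require curvature derivatives of order higher than $k$. The clean way to control this is to phrase the induction so that the hypothesis asserts the polynomial dependence for all orders simultaneously up to $k+2$, and to note that at coordinate order $k+2$ exactly one new curvature invariant, $\Rm^k(g)|_0$, becomes available; the symmetrization identities of the Riemann tensor guarantee that the radial-gauge relations are consistent and determine the symmetric part of the Hessian-type data. Real analyticity is used only to justify the existence of these Taylor expansions; the recursion itself is formal and the universal polynomials it produces are the ones asserted. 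This establishes Proposition~\ref{normcoord}.
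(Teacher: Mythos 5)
Your inductive step rests on a relation that the radial--gauge identity simply does not produce, and this is a genuine gap rather than bookkeeping. Differentiate $g_{ij}(x)x^j=\delta_{ij}x^j$ a total of $n\geq 2$ times and evaluate at $0$: by Leibniz, only the terms in which the linear factor $x^j$ absorbs exactly one derivative survive, and one obtains
$$\sum_{a}\alpha_a\,\partial^{\alpha-e_a}g_{ia}(0)=0 \,\, , \qquad |\alpha|=n \,\, ,$$
i.e.\ a \emph{homogeneous} symmetrization constraint among partial derivatives of $g$ of the \emph{same} order $n-1$. There is no isolated term $(k+3)\,\partial^{q}g_{ij}(0)$ on one side, no lower-order partials on the other, and no curvature anywhere: curvature can only enter once you adjoin its coordinate expression in terms of second derivatives of $g$, and at that point the actual content of the proposition is the claim that the combined linear system (Gauss-lemma symmetrizations plus the expressions of the components of $\Rm^0,\dots,\Rm^k$ at $0$) can be \emph{inverted} to solve for each individual partial derivative. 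That inversion, on spaces of tensors with prescribed symmetries, is the nontrivial step, and your proposal never supplies it. Even your base case secretly uses it: the formula $\partial_a\partial_b g_{ij}(0)=-\tfrac13\big(R_{iabj}+R_{ibaj}\big)$ does not follow from the thrice-differentiated gauge identity alone (which only yields the cyclic relation $\partial_b\partial_c g_{ia}(0)+\partial_a\partial_c g_{ib}(0)+\partial_a\partial_b g_{ic}(0)=0$); one must couple it with $R_{iabj}|_0$ written in second derivatives of $g$ and solve the resulting linear system. There is also an indexing inconsistency: partials of order $k+2$ should involve curvature derivatives only up to $\Rm^k$ (as the statement asserts), whereas your step invokes $(\nabla^g)^{k+1}\Rm$, which involves $k+3$ derivatives of the metric.

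The paper avoids this inversion problem altogether by working along radial geodesics with Jacobi fields: in normal coordinates $J(t)=t\,w^i\tfrac{\partial}{\partial x^i}$ is a Jacobi field along $\gamma(t)=ty$, and $f(t)=g_{\gamma(t)}(J(t),J(t))=t^2g_{ij}(ty)w^iw^j$. The Jacobi equation is then used recursively to express all covariant derivatives $J^{\{k+2\}}(0)$ as universal polynomials in $R^{\{0\}}(0),\dots,R^{\{k-1\}}(0)$, hence all derivatives $f^{(n)}(0)$ as polynomials in the components of $\Rm^0(g)|_0,\dots,\Rm^k(g)|_0$; matching the Taylor coefficients of $f$ against those of $t^2g_{ij}(ty)w^iw^j$ directly delivers every $\partial^q g_{ij}(0)$ in the required polynomial form. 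If you wish to salvage your route, you must either prove the inversion lemma for the combined symmetrization-plus-curvature system (this can be done, but it is a genuine combinatorial argument), or switch to the Jacobi-field computation as in the paper.
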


\begin{proof} Consider a ball $B\subset \bR^m$ centered at the origin and denote by $g$ a real analytic Riemannian metric on $B$. We also assume that the standard coordinates $(x^1,{\dots},x^m)$ of $\bR^m$ are normal for $g$ at $0$. Then, there exists $\e>0$ sufficiently small such that $$g_{ij}(x)=\d_{ij}+\sum_{k=0}^{\infty}\sum_{|q|=k+2}\frac{\p^qg_{ij}(0)}{q!}x^q \quad \text{ for any $|x|_{\st}<\e$ \,\, ,}$$ where for any multi-index $q=(q_1,{\dots},q_m)$ we set $$q! \= q_1! {\dots} q_m! \,\, , \quad x^q\=(x^1)^{q_1} {\dots} (x^m)^{q_m} \,\, , \quad \p^q\=\frac{\p^{|q|}}{\p^{q_1}{x^1}{\dots}\p^{q_m}{x^m}} \,\, .$$

Fix now $y \in B$ and consider the radial geodesic $\g(t):=ty$ together with the Jacobi vector field $J(t)\=tw^i\tfrac{\p}{\p x^i}$ along $\g(t)$. Set also $f(t)\=g_{\g(t)}(J(t),J(t))$ and, for any tensor field $A=A(t)$, denote by $A^{\{k\}}(t)$ the $k$-th covariant derivative $A^{\{k\}}\=(\n^g_t)^kA$ along $\g(t)$. We recall that the Jacobi equation is $$J^{\{2\}}(t)=R(t)\big(J^{\{0\}}(t)\big) \quad \text{ with } \,\, R(t)(\,\cdot\,)\=-\Rm(g)_{\g(t)}(\dot{\g}(t)\wedge(\,\cdot\,)) \dot{\g}(t) \,\, .$$ By the Leibniz rule, we get \beq \begin{gathered} J^{\{0\}}(0)=0 \,\, , \quad J^{\{1\}}(0)=w  \,\, , \quad J^{\{2\}}(0)=0 \,\, , \\ J^{\{k+2\}}(0) = P_k\big(R^{\{0\}}(0),{\dots},R^{\{k-1\}}(0)\big)w \quad \text{for any integer $k\geq1$} \,\, , \end{gathered} \label{Pk} \eeq where $P_k$ are polynomials in $k$ variables of degree $\deg(P_k)=\lfloor{\frac{k+1}2}\rfloor$ recursively defined by \begin{gather*}
P_1(a^0)\=a^0 \,\, , \quad P_2(a^0,a^1)\=2a^1 \,\, , \\
P_k(a^0,{\dots},a^{k-1})\=ka^{k-1}+\sum_{i=1}^{k-2}\binom{k}{i+2}a^{k-2-i}P_i(a^0,{\dots},a^{i-1}) \,\, .
\end{gather*} Differentiating the function $f$ we get $f(0)=\dot{f}(0)=0$, $\ddot{f}(0)=\la w,w \ra_{\st}$ and for any integer $k\geq1$ \begin{gather} f^{(2k+1)}(0)=2(2k+1)\big\la J^{\{2k\}}(0),w\big\ra_{\st}+\sum_{i=3}^k2\binom{2k+1}{i}\big\la J^{\{2k+1-i\}}(0),J^{\{i\}}(0)\big\ra_{\st} \,\, , \label{f(2k+1)} \\ f^{(2k+2)}(0)=2(2k+2)\big\la J^{\{2k+1\}}(0),w\big\ra_{\st}+\sum_{i=3}^k2\binom{2(k+1)}{i}\big\la J^{\{2k+2-i\}}(0),J^{\{i\}}(0)\big\ra_{\st}+ \nonumber \phantom{aaaaaaaaaaaaaa} \\ \phantom{aaaaaaaaaaaaaaaaaaaaaaaaaaaaaaaaaaaaaaaaaaaaaaaa} +\binom{2(k+1)}{k+1}\big\la J^{\{k+1\}}(0),J^{\{k+1\}}(0)\big\ra_{\st} \,\, . \label{f(2k+2)} \end{gather}

\noindent From \eqref{Pk}, \eqref{f(2k+1)} and \eqref{f(2k+2)} it follow that $$ f^{(k+4)}(0)=\sum_{i,j}\sum_{|q|=k+2}\a^{[k]}_{ijq}\,y^qw^iw^j \quad \text{ for any integer $k\geq0$ }\,\, ,$$ where $\a^{[k]}_{ijq}$ are coefficients which depends polynomially only on the components of $\Rm^0(g)|_0, {\dots}, \Rm^k(g)|_0$. Hence, for $t$ sufficiently small $$ f(t)=\sum_{k=0}^{\infty}\frac{f^{(k)}(0)}{k!}t^k =\d_{ij}w^iw^jt^2+\sum_{k=0}^{\infty}\frac{f^{(k+4)}(0)}{(k+4)!}t^{k+4}=t^2\Bigg(\d_{ij}+\sum_{k=0}^{\infty}\sum_{|q|=k+2}\frac{\a^{[k]}_{ijq}}{(k+4)!}y^qt^{k+2}\Bigg)w^iw^j \,\, . $$ Since $f(t)=t^2g_{ij}(ty)w^iw^j$, we finally get $$g_{ij}(x)=\d_{ij}+\sum_{k=0}^{\infty}\sum_{|q|=k+2}\frac{\tfrac{q!}{(k+4)!}\a^{[k]}_{ijq}}{q!}x^q$$ and the thesis follows. \end{proof}

\begin{proof}[Proof of Proposition \ref{propapp}] Up to scaling, we can assume that $(\mu^{(n)}) \subset \eH_m(1)$ and hence $\mu^{(\infty)} \in \eH_m(1)$. By \eqref{infL''}, it comes that if $(\mu^{(n)})$ converges infinitesimally to $\mu^{(\infty)}$ in the sense of Lauret, then $(\mu^{(n)})$ converges infinitesimally to $\mu^{(\infty)}$ according to Definition \ref{infconv}. On the other hand, if $(\mu^{(n)})$ converges infinitesimally to $\mu^{(\infty)}$ according to Definition \ref{infconv}, then there exists a sequence of matrices $(a^{(n)}) \subset \fO(m)$ such that $a^{(n)} \rar I_m$ and $$a^{(n)} \cdot \Rm^k(\mu^{(n)}) \rar \Rm^k(\mu^{(\infty)}) \quad \text{ as $n \rar +\infty$ } \label{app2}$$ for any integer $k\geq0$, where $a^{(n)}$ acts by change of basis. Therefore Proposition \ref{normcoord} implies that $$(a^{(n)})^{\ell}_i(a^{(n)})^r_j\, \p^q (\hat{g}_{\mu^{(n)}})_{\ell r}(0) \rar \p^q (\hat{g}_{\mu^{(\infty)}})_{ij}(0) \quad \text{ as $n \rar +\infty$ } \label{app1}$$ for any multi-index $q$ and this completes the proof. \end{proof}

\bigskip\bigskip
\font\smallsmc = cmcsc8
\font\smalltt = cmtt8
\font\smallit = cmti8
\hbox{\parindent=0pt\parskip=0pt
\vbox{\baselineskip 9.5 pt \hsize=5truein
\obeylines
{\smallsmc
Dipartimento di Matematica e Informatica ``Ulisse Dini'', Universit$\scalefont{0.55}{\text{\Aac}}$ di Firenze
Viale Morgagni 67/A, 50134 Firenze, ITALY}
\smallskip
{\smallit E-mail adress}\/: {\smalltt francesco.pediconi@unifi.it
}
}
}

\end{document}